\let\ep\varepsilon
\newcommand{\ga}{\gamma}
\newcommand{\bn}{\mathbf n}
\newcommand{\bw}{\mathbf w}
\newcommand{\T}{\mathcal T}
\newcommand{\Div}{\mathop{\rm div}}
\newcommand{\DivG}{{\,\operatorname{div_\Gamma}}}
\newcommand{\nablaG}{\nabla_\Gamma}
\newcommand{\Gs}{\mathcal{S}} 
\newcommand{\la}{\left\langle}
\newcommand{\ra}{\right\rangle}
\newcommand{\Wo}{\overset{\circ}{W}}
\newtheorem{remark}{Remark}[section]
\def\enorm#1{|\!|\!| #1 |\!|\!|}
\begin{document}
\title{An Eulerian space-time finite element method for diffusion problems on evolving surfaces}
\author{Maxim A. Olshanskii\thanks{Department of Mathematics, University of Houston, Houston, Texas 77204-3008
(molshan@math.uh.edu) and Dept. of Mechanics and Mathematics, Moscow State University, Russia.}
\and Arnold Reusken\thanks{Institut f\"ur Geometrie und Praktische  Mathematik, RWTH-Aachen
University, D-52056 Aachen, Germany (reusken@igpm.rwth-aachen.de,xu@igpm.rwth-aachen.de).}
\and Xianmin Xu$^\dag$\thanks{LSEC, Institute of Computational
  Mathematics and Scientific/Engineering Computing,
  NCMIS, AMSS, Chinese Academy of Sciences, Beijing 100190, China (xmxu@lsec.cc.ac.cn).}
}
\maketitle
\begin{abstract} In this paper, we study numerical methods for the solution of partial differential equations on evolving surfaces. The evolving hypersurface in $\Bbb{R}^d$ defines a $d$-dimensional space-time manifold in the space-time continuum $\Bbb{R}^{d+1}$.
 We derive and analyze  a variational formulation for a class of diffusion problems on the space-time manifold. For this variational formulation new well-posedness and stability results are derived.
The analysis is based on an inf-sup condition and involves some natural, but non-standard, (anisotropic) function spaces.  Based on this formulation  a discrete in time variational formulation is introduced that is very suitable as a starting point for a discontinuous Galerkin (DG) space-time finite element discretization. This DG space-time method is explained and results of numerical experiments are presented that illustrate  its properties.
\end{abstract}
%
\section[Introduction]{Introduction}\label{sec:introduction}
Partial differential equations (PDEs) posed on evolving surfaces arise in many applications.
In fluid dynamics, the concentration of surface active agents attached to an interface
between two phases of immiscible fluids is governed by a transport-diffusion  equation
on the interface~\cite{GReusken2011}. Another example is the diffusion of  trans-membrane receptors
in the membrane of a deforming and moving cell, which is typically modeled
by a parabolic PDE posed on an evolving surface \cite{Alberta_etal}.

Recently, several approaches   for solving PDEs on evolving surfaces numerically have been introduced.
The finite element method of Dziuk  and Elliott~\cite{Dziuk07}  is based on the \emph{Lagrangian} description
of a surface evolution and benefits from a special invariance property of test functions along
material trajectories. If one considers the \emph{Eulerian} description of a surface evolution, e.g., based  on the level set method~\cite{Sethian96AN}, then the surface is usually defined implicitly. In this case, regular surface triangulations and material trajectories of points on the surface are not easily  available.
Hence, Eulerian numerical techniques for the discretization of PDEs on surfaces have been studied in the literature.
In \cite{AS03,XuZh}   numerical approaches were introduced that are based on extensions of PDEs off a two-dimensional surface to a three-dimensional  neighbourhood of the surface. Then one can apply a standard finite element or (as was done in \cite{AS03,XuZh}) finite difference disretization to treat the extended equation in $\mathbb{R}^3$. The  extension, however, leads  to
degenerate parabolic PDEs and requires the solution of equations in a higher dimensional domain. For a detailed discussion of this extension approach we refer to \cite{Greer,Dziukelliot2x,ChOlsh}. A related approach was developed in ~\cite{ElliottStinner}, where advection-diffusion
 equations are numerically solved on evolving diffuse interfaces.

A different  Eulerian technique for the numerical solution of an elliptic PDE
posed on a hypersurface in $\mathbb{R}^3$ was introduced in \cite{OlshReusken08,OlshanskiiReusken08}.
The main idea of this method is to use finite element spaces that are induced by the volume triangulations (tetrahedral decompositions) of a bulk domain in order to discretize a partial differential equation on the embedded surface. This method does not use an extension of the surface partial differential equation.  It is instead based on a restriction (trace) of the outer  finite element spaces to the (approximated) surface. This leads to discrete problems for which the number of degrees of freedom corresponds to the two-dimensional nature of the surface problem, similar to the Lagrangian approach. At the same time, the method is essentially Eulerian as the surface is not tracked by a surface mesh and may be defined
implicitly as the zero level of a level set function. For the discretization of the PDE on the surface, this zero level then has to be reconstructed. Optimal discretization error bounds were proved in  \cite{OlshReusken08}.
The approach was further developed in \cite{DemlowOlshanskii12,ORX}, where adaptive and streamline diffusion variants of this surface finite element were introduced and analysed. These papers \cite{OlshReusken08,OlshanskiiReusken08,DemlowOlshanskii12,ORX}, however,
treated elliptic and parabolic equations on \emph{stationary} surfaces.

The goal of this paper is to extend  the approach from  \cite{OlshReusken08} to parabolic equations  on \emph{evolving} surfaces. An evolving surface defines
a three-dimensional {space-time manifold} in the space-time continuum $\mathbb{R}^{4}$.
The surface finite element method that we introduce is based on the {traces}  of outer {space-time} finite element functions on this  manifold. The finite element functions are piecewise polynomials with respect to a volume mesh, consisting  of four-dimensional prisms (4D prism = 3D tetrahedron $\times$ time interval).
For this discretization technique, it is natural to start with a variational formulation of the differential problem on the space-time manifold. To our knowledge such a formulation has not been studied in the literature, yet.  One new result of this paper is the derivation and analysis of  a variational formulation for a class of diffusion problems on the space-time manifold. For this  formulation we prove well-posedness and stability results.
The analysis is based on an inf-sup condition and involves some natural, but non-standard, anisotropic function spaces. A second important result is the formulation and analysis  of a discrete in time variational formulation that is very suitable as a starting point for a discontinuous Galerkin space-time finite element discretization.
Further, we present a finite element method, which then results in discretization (in space and time) of a parabolic equation on an evolving surface.

 The discretization approach  based on  traces of an outer space-time finite element space studied here is also investigated in the recent report \cite{refJoerg}. 
 In \cite{refJoerg} there is no analysis of the corresponding continuous variational formulation, which is the main topic of this paper. On the other hand, in \cite{refJoerg} one finds more information on implementation aspects and an extensive numerical study of properties (accuracy and stability) of this method and some of its variants. Further results of numerical experiments for the example of surfactant transport over two colliding spheres can be found in \cite{GOR2014}. 
 We only very briefly comment on implementation aspects and illustrate  accuracy and stability properties of the discretization method by results of  a few numerical experiments.

In this paper, we do not study discretization error bounds for the presented Eulerian space-time finite element method.  This is a topic of current research,  first results of which are presented in the follow-up report \cite{refARrecent}.

The remainder of the paper is organized as follows. In Section~\ref{sec2} we review surface transport-diffusion equations
and introduce a space-time weak formulation.  Some required results for surface functional spaces
are proved in Section~\ref{sec3}.  A general space-time variational formulation and corresponding well-posedness results are presented  in Section~\ref{sec4}. A semi-discrete in time method is analyzed in Section~\ref{sec5}. A fully discrete space-time
finite element method is considered in Section~\ref{sec6}. Section~\ref{sec7} contains results of some numerical
experiments.

\section{Diffusion equation  on an evolving surface}\label{sec2}

Consider a surface $\Gamma(t)$ passively advected by a velocity field $\bw=\bw(x,t)$, i.e. the normal velocity of $\Gamma(t)$ is given by $\bw \cdot \bn$, with
$\bn$ the unit normal on $\Gamma(t)$. We assume that for all $t \in [0,T] $,  $\Gamma(t)$ is a smooth hypersurface that is  closed ($\partial \Gamma =\emptyset$), connected, oriented, and contained in a fixed domain $\Omega \subset \Bbb{R}^3$.  To describe the smoothness assumption concerning $\Gamma(t)$ and its evolution more precisely, we introduce the Langrangian mapping from the space-time cylinder $\Gamma_0 \times [0,T]$, with $\Gamma_0:=\Gamma(0)$, to the space-time manifold
\[\Gs: = \bigcup\limits_{t \in (0,T)} \Gamma(t) \times \{t\},\quad  \Gs\subset \Bbb{R}^{4}, \]
see also \cite{ElliottStinner}. We assume that the velocity field $\bw$ and  $\Gamma_0$ are sufficiently smooth such that for all $y \in \Gamma_0$ the ODE system
\[
  \Phi(y,0)=y, \quad \frac{\partial \Phi}{\partial t}(y,t)= \bw(\Phi(y,t),t), \quad t\in [0,T],
\]
has a unique solution $x:=\Phi(y,t) \in \Gamma(t)$ (recall that $\Gamma(t)$ is transported with the  velocity field $\bw$). The corresponding inverse mapping is given by $\Phi^{-1}(x,t):=y \in \Gamma_0$, $x \in \Gamma(t)$. The Lagrangian mapping  $\Phi$  induces a bijection
\begin{equation} \label{defF}
 F:\,   \Gamma_0 \times [0,T] \to \Gs, \quad ~F(y,t):=(\Phi(y,t),t).
\end{equation}
\emph{We assume this bijection to be a $C^2$-diffeomorphism between these manifolds}. The conservation of a scalar quantity
$u: \Gs \to \Bbb{R}$ with a diffusive flux on $\Gamma(t)$ leads to the surface PDE (cf. \cite{James04}):
\begin{equation}
\dot{u} + ({\Div}_\Gamma\bw)u -{ \nu_d}\Delta_{\Gamma} u=0\qquad\text{on}~~\Gamma(t), ~~t\in (0,T],
\label{transport}
\end{equation}
 with initial condition $u(x,0)=u_0(x)$ for $x \in \Gamma(0)$. Here
\[ \dot{u}= \frac{\partial u}{\partial t} + \bw \cdot \nabla u
 \]
 denotes the advective material derivative,
 ${\Div}_\Gamma:=\operatorname{tr}\left( (I-\bn\bn^T)\nabla\right)$ is the surface divergence and $\Delta_\Gamma$ is the
 Laplace-Beltrami operator, $\nu_d>0$ is the constant diffusion coefficient.
Let $H^1(\Gs)$ be the usual Sobolev space on $\Gs$.
The following weak formulation of \eqref{transport} was shown to be well-posed in \cite{Dziuk07}:
Find $u \in H^1(\Gs)$ such that $u(\cdot,0) = u_0$ and for almost all $t \in (0,T]$
\begin{equation} \label{weakDziuk}
  \int_{\Gamma(t)} \dot{u} v + u v \, {\Div}_\Gamma \bw + \nu_d  \nablaG u \cdot \nablaG v  \, ds  = 0 \quad  \text{for all} ~ v(\cdot,t) \in H^1\big(\Gamma(t)\big).
\end{equation}
Here $\nablaG$ is the tangential gradient for $\Gamma(t)$. A similar weak formulation is considered in \cite{Vierling}.
The formulation~\eqref{weakDziuk} is a natural starting point for the \emph{Lagrangian} type finite element methods treated in  \cite{Dziuk07,Vierling}.
It is, however, less suitable for the  \emph{Eulerian} finite element method that we introduce in this paper.
Our discretization method uses the framework of space-time finite element methods. Therefore, it is natural  to consider
a space-time weak formulation of \eqref{transport} as given below.
We introduce the space
\[
H=\{\, v \in L^2(\Gs)~|~ \|\nablaG v\|_{L^2(\Gs)} <\infty \, \}
\]
endowed with the scalar product
\begin{equation}
(u,v)_H=(u,v)_{L^2(\Gs)}+ (\nablaG u, \nablaG v)_{L^2(\Gs)}, \label{inner}
  \end{equation}
and consider the material derivative as a linear functional on $H$. The subspace of all functions $v$ from $H$ such that
$\dot{v}$ defines a bounded linear functional form the trial space $W$. A precise definition of the space $W$ is given { in} Section~\ref{sectHW}. We consider the following weak formulation of  \eqref{transport}: Find $u\in W$ such that
\begin{equation} \label{weakSpaceTime}
\begin{split}
  \la\dot{u}, v\ra + \int_0^T \int_{\Gamma(t)} u v \, {\Div}_\Gamma \bw + {\nu_d} \nablaG u \cdot \nablaG v  \, ds dt& = 0\qquad \text{for all} ~ v \in H, \\
     u(\cdot,0) &= u_0.
\end{split}
\end{equation}
 We shall derive certain density properties for the
spaces $W$ and $H$, which we need for proving the well-posedness of \eqref{weakSpaceTime}. Actually, we
show  well-posedness of a slightly more general problem, which includes a possibly nonzero  source term
 and, instead of $ ({\Div}_\Gamma \bw) u$, a generic zero order term.  Our finite element method will be based on \eqref{weakSpaceTime} rather than \eqref{weakDziuk}.

\section{Preliminaries} \label{sec3}
In this section, we  define the trial space $W$ and prove that both the test space $H$ and the trial space $W$
are Hilbert spaces, and that smooth functions are dense in  $H$ and $W$. We also prove that
a function from $W$ has a well-defined trace as an element of $L^2(\Gamma(t))$ for all $[0,T]$. In the setting of a space-time manifold, the spaces $H$ and $W$ are  natural ones. In the literature, however, we did not find any analysis of their properties. The necessary results are established with the
help of a homeomorphisms between $H$, $W$ and the following standard  Bochner spaces $\widehat{H}$ and $\widehat{W}$:
\begin{equation}\label{WX}
\widehat{H}:=L^2(0,T;H^1(\Gamma_0)),\qquad \widehat{W}:= \{\, u \in \widehat{H}~|~\frac{\partial u}{\partial t} \in \widehat{H}'\,\},\qquad \Gamma_0:=\Gamma(0).
\end{equation}
In the next { subsection}, we  collect a few  properties of the Bochner spaces $\widehat{H}$ and $\widehat{W}$ that we need in our analysis.

\subsection{Properties of the spaces $\widehat{H}$ and $\widehat{W}$}
The  spaces $\widehat{H}$ and $\widehat{W}$ are endowed with the norms
\[
\|u\|_{\widehat{H}} =\left(\int_0^T \|u(t)\|_{H^1(\Gamma_0)}^2 \, dt\right)^{\frac12}\quad \text{and}\quad
\|u\|_{\widehat{W}}=\left(\|u\|_{\widehat{H}}^2+\left\|\frac{\partial u}{\partial t}\right\|_{\widehat{H}'}^2\right)^{\frac12}.
\]
We start with the following well-known result.
\begin{lemma} \label{lemma1}
 The space $C_0^\infty(\Gamma_0 \times (0,T))$ is dense in $\widehat{H}$.
\end{lemma}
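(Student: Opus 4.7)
The plan is to reduce the statement to the standard density theorem for Bochner spaces. Since $\widehat{H} = L^2(0,T;X)$ with the Hilbert space $X := H^1(\Gamma_0)$, I would approximate any $u \in \widehat{H}$ first by simple $X$-valued step functions, then replace the step-function coefficients by smooth temporal cut-offs, and finally replace the $X$-valued constants by $C^\infty(\Gamma_0)$ functions. The tensor product of the two smooth factors lies in $C_0^\infty(\Gamma_0 \times (0,T))$ because $\Gamma_0$ is a smooth, compact, closed hypersurface so that compactness of support is automatic in the spatial variable.

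Concretely, the argument proceeds in three steps. First, by standard Bochner-space theory, the simple $X$-valued functions $s(t) = \sum_{i=1}^N \chi_{I_i}(t)\,\phi_i$, with $\phi_i \in X$ and $I_i \subset (0,T)$ pairwise disjoint measurable sets, are dense in $L^2(0,T;X)$. By outer/inner regularity of the Lebesgue measure one may further assume each $I_i$ to be an open interval whose closure is contained in $(0,T)$. Second, smoothness and compactness of $\Gamma_0$ imply that $C^\infty(\Gamma_0)$ is dense in $H^1(\Gamma_0)$, so each $\phi_i$ can be replaced by $\tilde\phi_i \in C^\infty(\Gamma_0)$ with $\|\phi_i - \tilde\phi_i\|_{H^1(\Gamma_0)}$ as small as desired. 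Third, a standard mollification in $t$ produces $\eta_i^\varepsilon \in C_0^\infty((0,T))$ with $\eta_i^\varepsilon \to \chi_{I_i}$ in $L^2(0,T)$. Combining these ingredients, the function
\[
v_\varepsilon(x,t) := \sum_{i=1}^N \eta_i^\varepsilon(t)\,\tilde\phi_i(x)
\]
belongs to $C_0^\infty(\Gamma_0 \times (0,T))$.

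To establish convergence $v_\varepsilon \to u$ in $\widehat{H}$, I would use the tensor-product identity $\|\eta(t)\phi(x)\|_{\widehat{H}}^2 = \|\eta\|_{L^2(0,T)}^2 \,\|\phi\|_{H^1(\Gamma_0)}^2$ together with the triangle inequality, and choose the mollification parameter and the $\tilde\phi_i$ sufficiently accurately (a simple diagonal argument in the three approximation parameters if needed). I do not expect any genuine obstacle: the lemma is a classical Bochner-space density result, and the only point requiring mild care is to carry out the temporal and spatial approximations in an order compatible with the product structure of the $\widehat{H}$-norm.
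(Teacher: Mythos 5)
Your proposal is correct and follows essentially the same route as the paper: approximate by $H^1(\Gamma_0)$-valued simple functions (standard Bochner density), then replace each term $\chi_{B_i}\phi_i$ by a tensor product $g_i\widehat\phi_i$ with $g_i \in C_0^\infty(0,T)$ and $\widehat\phi_i \in C^\infty(\Gamma_0)$. The paper states this last replacement in one line, whereas you spell out the mollification in $t$ and the density of $C^\infty(\Gamma_0)$ in $H^1(\Gamma_0)$; the substance is identical.
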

\begin{proof}  The inclusion $C_0^\infty (\Gamma \times (0,T)) \subset \widehat{H}$ is trivial.
 By construction of the Bochner space, the set of simple functions
$
  \left\{\, \sum_{i=1}^n \chi_{B_i} \phi_i ~|~ \phi_i \in H^1(\Gamma_0),~~n \in \Bbb{N} \, \right\}
$
is dense in $\widehat{H}$; here  $\{B_i\}$ is any set of $n$ mutually disjoint measurable subsets of $(0,T)$.
For $\chi_{B_i} \phi_i$ there exists $g_i \in C_0^\infty(0,T)$ and $\widehat \phi_i \in C^\infty(\Gamma_0)$
such that $\|\chi_{B_i} \phi_i -g_i\widehat \phi_i\|_{\widehat{H}}$ is arbitrary small. This completes the proof.
\end{proof}
\\[1ex]
For $w \in \widehat{H}$ we define the weak time derivative through the functional
\begin{equation} \label{r1}
   \la \frac{\partial w}{\partial t}, \phi \ra := -\int_0^T \int_{\Gamma_0} w(t) \frac{\partial \phi}{\partial t} \, ds \, dt \quad \text{for}~~\phi \in C^1_0(\Gamma_0 \times (0,T)).
\end{equation}
Then $ \frac{\partial w}{\partial t} \in \widehat{H}'$ iff there is a constant $c$ such that
\[
  \left| \la \frac{\partial w}{\partial t}, \phi \ra \right| \leq c \|\phi\|_{\widehat{H}} \quad \text{for all} ~~\phi \in C^1_0(\Gamma_0 \times (0,T)).
\]

\begin{remark}
\rm
The definition of $\widehat{W}$ in \eqref{WX}, based on the weak time derivative \eqref{r1}, is equivalent to the following more standard one: $w \in \widehat{H}$ is an element of $\widehat{W}$ iff there exists $z \in L^2(0,T;H^{-1}(\Gamma_0))$ such that
\begin{equation} \label{r2}
 \int_0^T \la z(t), v\ra_{H^{-1}\times H^1} \phi(t) \, dt = - \int_0^T \int_{\Gamma_0} w(t) v \phi'(t) \, ds \, dt
\end{equation}
for all $v \in H^{1}(\Gamma_0),~\phi \in C_0^\infty(0,T)$.
The definition of $\widehat{W}$ in \eqref{WX} is more convenient for the analysis that follows.
\end{remark}

We recall a few results for the space  $\widehat{W}$.

\begin{lemma} \label{lem4}
The set
\[
 \mathcal{C}=\left\{ \, \sum_{i=1}^n t^i \phi_i~|~\phi_i \in C^\infty(\Gamma_0), ~~n \in \Bbb{N}\,\right\}
\]
is dense in $\widehat{W}$. For $u \in \widehat{W}$ the function $t \to u(t)=u(\cdot,t)$ is continuous from $[0,T]$ into $L^2(\Gamma_0)$. There is  a constant $c$ such that
\begin{equation} \label{trace1}
 \max_{0 \leq t \leq T} \|u(t)\|_{L^2(\Gamma_0)} \leq c \|u\|_{\widehat{W}} \quad \text{for all} ~~u \in \widehat{W}.
\end{equation}
\end{lemma}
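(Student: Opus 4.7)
The plan is to recognize that $\widehat{W}$ is the standard Lions–Magenes space associated with the Gelfand triple $H^1(\Gamma_0) \hookrightarrow L^2(\Gamma_0) \hookrightarrow H^{-1}(\Gamma_0)$, so classical techniques apply. The preliminary observation is that the weak derivative from \eqref{r1} extends, by the Remark, to an element of $L^2(0,T;H^{-1}(\Gamma_0))$, and by Bochner duality one has $\widehat{H}' \cong L^2(0,T;H^{-1}(\Gamma_0))$. This identification lets us manipulate $\partial_t u$ pointwise in time. I would first establish density and then derive continuity and the trace estimate from it.

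For the density of $\mathcal{C}$ in $\widehat{W}$, I would proceed in three steps applied to an arbitrary $u\in\widehat{W}$. First, extend $u$ by reflection across $t=0$ and $t=T$ to a slightly larger time interval and convolve with a standard time-mollifier $\eta_\delta$. Since convolution in $t$ commutes with the weak derivative in the $L^2(0,T;H^{-1}(\Gamma_0))$ representation, the resulting $u_\delta$ lies in $C^\infty([0,T];H^1(\Gamma_0))$ with $u_\delta\to u$ in $\widehat{H}$ and $\partial_t u_\delta\to\partial_t u$ in $\widehat{H}'$. Second, fix an $H^1(\Gamma_0)$-orthonormal basis $\{\phi_i\}_{i\ge 1}$ consisting of $C^\infty(\Gamma_0)$-functions (possible by separability of $H^1(\Gamma_0)$, density of $C^\infty(\Gamma_0)$ in it, and Gram–Schmidt); the truncations $P_N u_\delta(t)=\sum_{i=1}^N a_i^\delta(t)\phi_i$ with $a_i^\delta\in C^\infty[0,T]$ converge to $u_\delta$ in $\widehat{H}$ by dominated convergence, and $P_N\partial_t u_\delta\to \partial_t u_\delta$ in $\widehat{H}'$ by the same argument combined with the continuous embedding $L^2(\Gamma_0)\hookrightarrow H^{-1}(\Gamma_0)$. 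Third, for each $i$ apply the Weierstrass approximation theorem to $a_i^\delta$ simultaneously in the $C^0$- and $C^1$-norms of $[0,T]$; this produces an element of $\mathcal{C}$ arbitrarily close to $P_N u_\delta$, and hence to $u$, in the $\widehat{W}$-norm.

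For the trace estimate and continuity, I would begin with $u\in\mathcal{C}$, where the identity $\frac{d}{dt}\|u(t)\|_{L^2(\Gamma_0)}^2 = 2\langle \partial_t u(t),u(t)\rangle_{H^{-1}\times H^1}$ holds pointwise in $t$. Integrating on $[s,t]$, averaging over $s\in(0,T)$, and applying Cauchy–Schwarz together with Young's inequality yield $\max_{t\in[0,T]}\|u(t)\|_{L^2(\Gamma_0)}^2\le c(\|u\|_{\widehat{H}}^2+\|\partial_t u\|_{\widehat{H}'}^2)=c\|u\|_{\widehat{W}}^2$, and the map $t\mapsto u(t)\in L^2(\Gamma_0)$ is continuous for such $u$. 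For general $u\in\widehat{W}$, select $u_k\in\mathcal{C}$ with $u_k\to u$ in $\widehat{W}$; applying the estimate to $u_k-u_\ell$ shows that $\{u_k\}$ is Cauchy in $C([0,T];L^2(\Gamma_0))$, so the limit must coincide with $u$ almost everywhere and inherits both the continuity and the bound \eqref{trace1}.

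The main obstacle is the preparatory identification $\widehat{H}'\cong L^2(0,T;H^{-1}(\Gamma_0))$, together with the verification that the weak derivative from \eqref{r1} coincides, under this identification, with the representative $z$ described in the Remark. Once this Gelfand-triple setup is in place, all subsequent steps reduce to standard mollification and Galerkin arguments and the trace bound follows from the classical energy identity.
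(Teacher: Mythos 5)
Your proof is correct, but it does far more work than the paper does: the paper's entire proof of Lemma~\ref{lem4} is a citation of Zeidler, Proposition~23.23, for the classical Lions space $\widehat{W}$, followed by the single genuine observation that density of time-polynomials with $H^1(\Gamma_0)$ coefficients (the textbook statement) upgrades to density with $C^\infty(\Gamma_0)$ coefficients because $C^\infty(\Gamma_0)$ is dense in $H^1(\Gamma_0)$. You instead reprove the textbook result from first principles --- reflection and mollification in time, Galerkin truncation in a smooth $H^1(\Gamma_0)$-orthonormal basis, Weierstrass approximation in $C^1[0,T]$, and then the standard energy identity plus a Cauchy-sequence argument in $C([0,T];L^2(\Gamma_0))$ for the trace bound and continuity. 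Every step is sound, and your choice of a \emph{smooth} orthonormal basis absorbs the paper's $H^1$-to-$C^\infty$ upgrade automatically, so nothing further is needed. Two small remarks. First, your construction yields polynomials with a constant term, i.e.\ it proves density of $\left\{\,\sum_{i=0}^n t^i\phi_i\,\right\}$; the set $\mathcal{C}$ as printed starts the sum at $i=1$, and such a set cannot be dense in $\widehat{W}$ since, by \eqref{trace1}, its closure consists of functions vanishing at $t=0$. This is evidently a typo in the statement, and your argument quietly proves the corrected version. Second, the identification $\widehat{H}'\cong L^2(0,T;H^{-1}(\Gamma_0))$ and its compatibility with the weak derivative \eqref{r1}, which you flag as the main preparatory obstacle, is precisely the content of the remark following \eqref{r1} in the paper, so you may simply invoke it rather than verify it.
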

\begin{proof} Proofs are given in standard textbooks, e.g., \cite{Zeidler} Proposition 23.23. The density result is usually proved with
$ C^\infty(\Gamma_0)$ replaced by $H^{1}(\Gamma_0)$ in the definition of $\mathcal{C}$. The result with $ C^\infty(\Gamma_0)$ follows from the density of $ C^\infty(\Gamma_0)$ in $H^{1}(\Gamma_0)$.
\end{proof}

\subsection{The spaces $H$ and $W$} \label{sectHW}

We assume that the space-time surface $\Gs$ is sufficiently smooth, cf. Section~\ref{sec2}. 
Due to the identity
\begin{equation}\label{transform}
 \int_0^T \int_{\Gamma(t)} f(s,t) \, ds \, dt = \int_{\Gs} f(\sigma) (1+ (\bw \cdot \bn)^2)^{-\frac12}\, d\sigma,
\end{equation}
the scalar product $\int_0^T \int_{\Gamma(t)} v w \, ds \, dt$ induces a norm   that is equivalent to the standard norm on $L^2(\Gs)$. Therefore, one can equivalently define the norm on $H$ by
\begin{equation} \label{defH}
\|v\|_H^2:=\int_0^T \int_{\Gamma(t)} v^2+ |\nablaG v|^2ds \, dt.
\end{equation}
The space $H$ is a Hilbert space, and  $H \hookrightarrow L^2(\Gs) \hookrightarrow H'$ forms a Gelfand triple (cf. Lemma~\ref{PropH}
below).

Recall the Leibniz formula
\begin{equation}\label{Leibniz}
  \int_{\Gamma(t)} \dot v + v \DivG \bw \, ds = \frac{d}{dt}  \int_{\Gamma(t)} v \, ds, \quad v\in C^1(\Gs),
\end{equation}
which implies the  integration by parts  identity:
\begin{equation}\label{ByParts}
 \begin{split}
  & \int_0^T \int_{\Gamma(T)} \dot uv +\dot vu + u v \DivG \bw \, ds\, dt\\  & = \int_{\Gamma(T)} u(s,T) v(s,T)\, ds - \int_{\Gamma(0)} u(s,0) v(s,0)\, ds \quad \text{for all}~u,v\in C^1(\Gs).
\end{split}
\end{equation}


Based on \eqref{ByParts} we  define  the material derivative  for $u \in H$ as the functional $\dot u$:
\begin{equation} \label{weakmaterial}
 \la\dot u,\phi\ra= - \int_0^T \int_{\Gamma(t)} u \dot \phi + u \phi \DivG \bw\, ds \, dt  \quad \text{for all}~~ \phi \in C_0^1(\Gs).
\end{equation}
Assume that for some $u\in H$ the norm
\[
\|\dot{u}\|_{H'}=\sup_{\phi\in C_0^1(\Gs)}\frac{\la\dot u,\phi\ra}{\|\phi\|_{H}}
\]
is bounded. In~Lemma~\ref{PropH} we prove that $C_0^1(\Gs)$ is  dense in $H$ and therefore $\dot{u}$ can be extended to a bounded linear functional on $H$. In this case,
we write $\dot u \in H'$ and  define the space
\[
  W= \{ \, u\in H~|~\dot u \in H' \,\}, \quad \text{with}~~\|u\|_W^2 := \|u\|_H^2 +\|\dot u\|_{H'}^2.
\]
In Section~\ref{secttHW}  we prove that $W$ is a Hilbert space and $C^1(\Gs)$ is  dense in $W$.
Note that the space $W$ is larger than the standard Sobolev space $H^1(\Gs)$. Spaces similar to $H$ and $W$ are introduced and analyzed in \cite{Vierling}. A difference between our aproach and the one  used in that paper is that we define $H$ and $W$  directly on the space-time manifold $\Gs$, whereas in \cite{Vierling} these are defined using a pull back operator to the manifold $\Gamma_0 \times [0,T]$. We use such a pull back operator in the analysis of the spaces $H$ and $W$ in the next section, but not in their definition.
\begin{remark}
 \rm From the definition of the weak material derivative in \eqref{weakmaterial} and the density result of Lemma~\ref{PropH} it follows that for $u \in C^1(\Gs)$ we have
\begin{equation*} \label{id1}
 \langle \dot u , v\rangle = \int_0^T \int_{\Gamma(t)} \dot u v  \, ds \, dt \quad \text{for all}~~v \in H.
\end{equation*}
\end{remark}

\subsection{ Homeomorphism between \{$\widehat{H}$, $\widehat{W}$\} and \{$H$, $W$\}}
\label{secHomo}
Based on the  $C^2$-diffeomorphism $F$ in  \eqref{defF},
for a function $u$ defined on $\Gs$  we define $\widehat u=u\circ F$ on $\Gamma_0 \times (0,T)$:
\[
 \widehat u(y,t)=u(\Phi(y,t),t)=u(x,t).
\]
Vice versa, for a function $\widehat u$ defined on $\Gamma_0 \times (0,T)$  we define $u=\widehat u\circ F^{-1}$ on $\Gs$:
\begin{equation}\label{homo}
u(x,t)=\widehat u(\Phi^{-1}(x,t),t)=\widehat u(y,t).
\end{equation}
By construction we have
\begin{equation} \label{transf1}
  \dot u (x,t)= \frac{\partial \widehat u}{\partial t}(y,t).
\end{equation}
Now we prove that the mapping $\widehat u \to u$ defines a linear homeomorphism between  $\widehat{H}$ and $H$, and also between
$\widehat{W}$ and $W$.

\begin{lemma} \label{homoH}
The linear mapping $\widehat u \to u$ from \eqref{homo} defines a  homeomorphism between $\widehat{H}$ and $H$.
\end{lemma}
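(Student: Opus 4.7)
The strategy is to reduce everything to a change of variables on each time slice and then integrate in $t$, exploiting that the $C^2$-diffeomorphism $F$ restricted to $\Gamma_0 \times \{t\}$ is a $C^2$-diffeomorphism $\Phi_t:\Gamma_0 \to \Gamma(t)$ that depends smoothly on $t$ over the compact parameter set $[0,T]$.

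The first step is to establish the pointwise transformation rules. For fixed $t$, the area element satisfies $ds_x = J(y,t)\,ds_y$ with $J(y,t)$ the tangential Jacobian of $\Phi_t$, and a standard chain-rule computation for functions on embedded surfaces yields
\[
 \nablaG u(x,t) = A(y,t)\,\nabla_{\Gamma_0}\widehat u(y,t), \qquad x=\Phi(y,t),
\]
where $A(y,t)$ is a linear map between the tangent spaces $T_y\Gamma_0$ and $T_x\Gamma(t)$ built from $D\Phi_t$ (essentially $(D\Phi_t)^{-T}$ after restriction to tangent spaces). Because $F$ is a $C^2$-diffeomorphism and $\Gamma_0\times[0,T]$ is compact, both $J$ and $|A|$, together with their inverses $J^{-1}$ and $|A^{-1}|$, are uniformly bounded: there exists $c>0$ such that $c\le J(y,t)\le c^{-1}$ and $c|\xi|\le |A(y,t)\xi|\le c^{-1}|\xi|$ for all $(y,t)$ and all tangent vectors $\xi$.

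The second step is to combine these pointwise bounds. For a.e.\ $t$, the change-of-variables formula and the uniform bounds give
\[
 \int_{\Gamma(t)} u(\cdot,t)^2\,ds \;\simeq\; \int_{\Gamma_0} \widehat u(\cdot,t)^2\,ds, \qquad
 \int_{\Gamma(t)} |\nablaG u(\cdot,t)|^2\,ds \;\simeq\; \int_{\Gamma_0} |\nabla_{\Gamma_0}\widehat u(\cdot,t)|^2\,ds,
\]
with constants independent of $t$. Integrating in $t$ and recalling the definition of $\|\cdot\|_H$ in \eqref{defH} and of $\|\cdot\|_{\widehat H}$ yields the norm equivalence
\[
 c\,\|\widehat u\|_{\widehat H} \;\le\; \|u\|_H \;\le\; c^{-1}\|\widehat u\|_{\widehat H}.
\]
In particular, if $\widehat u\in\widehat H$ then the pulled-forward $u$ has a well-defined tangential gradient in $L^2(\Gs)$, so $u\in H$, and conversely.

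The final step is bijectivity, which is immediate: the inverse map $u\mapsto \widehat u=u\circ F$ is the candidate inverse, and the norm equivalence shows it maps $H$ into $\widehat H$ boundedly. Linearity is obvious from \eqref{homo}. Continuity in both directions follows from the two-sided inequality just established, so the map is a homeomorphism.

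The main obstacle is the second half of the first step: carefully justifying the tangential chain rule $\nablaG u = A\,\nabla_{\Gamma_0}\widehat u$ for merely $H^1$ functions, not just smooth ones, and identifying $A$ so that its uniform invertibility follows from $F$ being a $C^2$-diffeomorphism. This is handled by first verifying the identity for smooth $\widehat u$ using projections onto tangent spaces and $D\Phi_t$, then extending by density of $C^\infty(\Gamma_0\times[0,T])$ in $\widehat H$ (Lemma~\ref{lemma1}); the uniform bounds on $A$ and $A^{-1}$ then follow from continuity of the surface derivatives of $\Phi$ on the compact set $\Gamma_0\times[0,T]$.
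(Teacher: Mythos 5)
Your proof is correct and takes essentially the same approach as the paper: a slice-wise equivalence $c_0\|\widehat u(t)\|_{H^1(\Gamma_0)}\le\|u(t)\|_{H^1(\Gamma(t))}\le c_1\|\widehat u(t)\|_{H^1(\Gamma_0)}$ with constants uniform in $t$, followed by integration over $(0,T)$. The paper simply asserts the uniform two-sided bound from the smoothness assumptions on $F$, whereas you spell out the area-element and tangential chain-rule details, the compactness argument for uniformity, and the density step for merely $H^1$ slices.
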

\begin{proof}
 For any fixed $t\in(0,T)$, we obtain $\widehat u(t):=\widehat u(\cdot,t) \in H^1(\Gamma_0)$ iff $u(t)=u(\cdot,t) \in H^1(\Gamma(t))$.  Let $\widehat u(t) \in H^1(\Gamma_0)$ for  all $t \in (0,T)$. Due to the smoothness assumptions on $F$,  there are constants $c_1, c_0 >0$, independent of $\widehat u$ and $t$,  such that
\begin{equation}
 c_0 \|\widehat u(t)\|_{H^1(\Gamma_0)} \leq \|u(t)\|_{H^1(\Gamma(t))} \leq c_1 \|\widehat u(t)\|_{H^1(\Gamma_0)} \quad \text{for all}~~t \in (0,T).
\end{equation}
Hence, $\widehat u \in \widehat{H}$ iff $u \in H$ holds, and the linear mapping $\widehat u \to u$ is a homeomorphism between $\widehat{H}$ and $H$.
\end{proof}
\\[1ex]
For the further analysis, we need a surface integral transformation formula. For this we consider  a local parametrization of $\Gamma_0$, denoted by $\mu:\Bbb{R}^2 \to \Gamma_0$, which is at least $C^1$ smooth. Then, $\Phi \circ \mu:=\Phi(\mu(\cdot), t)$ defines a $C^1$ smooth parametrization of $\Gamma(t)$.    For the surface measures $d\,\widehat{s}$ and $ds$ on $\Gamma_0$ and $\Gamma(t)$, respectively, we have the relation
\begin{equation} \label{fromtrans1}
  ds = \frac{| \nabla_\Gamma \Phi(\cdot ,t) \mu_x \times \nabla_\Gamma \Phi(\cdot,t) \mu_y|}{|\mu_x \times \mu_y|}\, d\,\widehat{s} =:\gamma(\cdot ,t)\, d\,\widehat{s},
\end{equation}
with $\mu_x=\frac{\partial \mu}{\partial x} \big(\mu^{-1}(\cdot)\big)$, and similarly for $\mu_y$. Recall that $ \nabla_\Gamma f$ denotes the $\Gamma(t)$-surface gradient of a scalar function $f$ defined on $\Gamma(t)$ for any fixed $t$.
Using this integral transformation formula, for  $u \in H$ and $\phi \in  C_0^1(\Gs)$ we obtain
\begin{equation} \label{transf2}
\la\dot u,\phi\ra= - \int_0^T \int_{\Gamma(t)} u \dot \phi \,  + u \phi \DivG \bw\, ds \, dt = -  \int_0^T \int_{\Gamma_0} (\widehat u \frac{\partial \widehat \phi}{\partial t}  +\widehat u \widehat \phi \widehat{\DivG \bw}\, )\gamma \, d\,\widehat{s} \, dt
\end{equation}

\begin{lemma} \label{homoW}  The linear mapping $\widehat u \to u$ from \eqref{homo} defines a  homeomorphism between $\widehat W$ and $W$.
\end{lemma}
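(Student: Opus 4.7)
The plan is to build on Lemma~\ref{homoH}, which already gives the $\widehat H\leftrightarrow H$ norm equivalence, so that only the material derivative needs to be transported across the pullback. Concretely, I would show that for $u\in H$ with corresponding $\widehat u\in\widehat H$, one has $\dot u\in H'$ iff $\partial\widehat u/\partial t\in\widehat H'$, with equivalent dual norms. Combined with Lemma~\ref{homoH} this yields the claimed homeomorphism.

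The key computation starts from the identity \eqref{transf2}:
\[
\la\dot u,\phi\ra = -\int_0^T\!\int_{\Gamma_0}\!\Bigl(\widehat u\,\frac{\partial\widehat\phi}{\partial t}\,\gamma + \widehat u\,\widehat\phi\,\widehat{\DivG\bw}\,\gamma\Bigr)\,d\widehat s\,dt.
\]
The trick that makes the two terms collapse is the pointwise Jacobian identity $\partial\gamma/\partial t=\widehat{\DivG\bw}\,\gamma$. I would derive it by applying the Leibniz formula \eqref{Leibniz} to an arbitrary $v\in C^1(\Gs)$, pulling back via \eqref{fromtrans1} to get $\int_{\Gamma_0}\widehat v\,\partial_t\gamma\,d\widehat s=\int_{\Gamma_0}\widehat v\,\widehat{\DivG\bw}\,\gamma\,d\widehat s$, and invoking the fundamental lemma of the calculus of variations. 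Substituting this identity into the expression above yields
\[
\la\dot u,\phi\ra = -\int_0^T\!\int_{\Gamma_0}\widehat u\,\frac{\partial(\widehat\phi\,\gamma)}{\partial t}\,d\widehat s\,dt = \bigl\la \tfrac{\partial\widehat u}{\partial t},\,\widehat\phi\,\gamma\bigr\ra,
\]
where the last equality uses the definition \eqref{r1} of the weak time derivative on $\widehat H$ (valid since $\widehat\phi\,\gamma\in C_0^1(\Gamma_0\times(0,T))$).

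Next I would argue that the multiplication operator $\widehat\phi\mapsto\widehat\phi\,\gamma$ is a bounded linear isomorphism of $\widehat H$ onto itself, and of $C_0^1(\Gamma_0\times(0,T))$ onto itself. This holds because the $C^2$-diffeomorphism assumption on $F$ forces $\gamma$ and $1/\gamma$ to be bounded and to have bounded spatial gradients on the compact manifold $\Gamma_0\times[0,T]$. Combining this isomorphism with the norm equivalence $\|\phi\|_H\sim\|\widehat\phi\|_{\widehat H}$ from Lemma~\ref{homoH} and taking suprema over test functions gives $\|\dot u\|_{H'}\sim \|\partial\widehat u/\partial t\|_{\widehat H'}$, completing the proof.

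The main obstacle I anticipate is bookkeeping the regularity: establishing the pointwise identity $\partial_t\gamma=\widehat{\DivG\bw}\,\gamma$ cleanly and verifying that $\gamma,1/\gamma$ are smooth enough so that $\widehat\phi\mapsto\widehat\phi\,\gamma$ preserves both the $\widehat H$ norm (up to constants) and the class $C_0^1(\Gamma_0\times(0,T))$ needed to range over all admissible test functions. Once the $C^2$-smoothness of $F$ is fully exploited, both points are routine; without it one would lose control of the equivalence constants.
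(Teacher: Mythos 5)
Your proposal is correct, and it takes a genuinely different route at the key step. The paper's proof, starting from the same pullback identity \eqref{transf2}, simply writes $\partial_t\widehat\phi\,\gamma=\partial_t(\widehat\phi\gamma)-\widehat\phi\,\partial_t\gamma$ and bounds the resulting three terms separately, using only that $\gamma$, $\partial_t\gamma$ and $\widehat{\DivG\bw}\,\gamma$ are bounded; it never needs, and never states, the pointwise Jacobian evolution law. You instead prove $\partial_t\gamma=\widehat{\DivG\bw}\,\gamma$ (your derivation from the Leibniz formula \eqref{Leibniz}, the transformation \eqref{fromtrans1} and the fundamental lemma is sound, since \eqref{transf1} gives $\int_{\Gamma(t)}\dot v\,ds=\int_{\Gamma_0}\partial_t\widehat v\,\gamma\,d\widehat s$), which collapses the zero-order term and yields the exact intertwining $\la\dot u,\phi\ra=\la\partial_t\widehat u,\widehat\phi\gamma\ra$ in the sense of \eqref{r1}. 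That buys you a cleaner statement — the two weak derivatives are related by an exact duality, not merely by a two-sided estimate with a lower-order remainder — and it packages both directions of the homeomorphism into a single claim, namely that $\widehat\phi\mapsto\widehat\phi\gamma$ is an isomorphism of $\widehat H$ and of $C_0^1(\Gamma_0\times(0,T))$ onto themselves. The price is that this isomorphism requires $1/\gamma$ to be bounded with bounded surface gradient; you correctly flag this as the remaining bookkeeping, and it is exactly the point the paper settles in the reverse direction via the computation \eqref{aux1}, which shows the denominator of $\widetilde\gamma=1/\gamma$ is uniformly bounded away from zero. With that fact supplied, your argument closes; I see no gap.
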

\begin{proof}
 The proof makes use of the formula \eqref{transf2}.
Take $u \in H$ with $\widehat u \in \widehat{W}$, and $\phi \in C_0^1(\Gs)$. We use the notation  $\lesssim$ if the constant that occurs in the inequality  is independent of $u$ and $\phi$, and $\simeq$ if such an inequality holds in two directions.
Due to the $C^2$-smoothness assumption on $F$ (and thus $\Phi$) the function $\gamma$ defined in \eqref{fromtrans1} is $C^1$-smooth on $\Gamma_0 \times [0,T]$. Define $\widehat \psi:= \widehat \phi \gamma \in C_0^1(\Gamma_0 \times (0,T))$. Due to Lemma~\ref{homoH}
we have $\|\widehat \psi\|_{\widehat{H}} \lesssim \|\widehat \phi \|_{\widehat{H}} \simeq \|\phi\|_H$.
Therefore, we can estimate
\begin{equation*}
 \begin{split}
 |\la \dot u, \phi \ra | & = \big |\int_0^T \int_{\Gamma_0} (\widehat u \frac{\partial \widehat \phi}{\partial t} +\widehat u \widehat \phi \widehat{\DivG \bw}\, )\gamma  \, d\,\widehat{s} \, dt \big|
  \\ &\leq \big |\int_0^T \int_{\Gamma_0} \widehat u \frac{\partial \widehat \psi}{\partial t}\, d \widehat{s} \, dt \big| + \big| \int_0^T \int_{\Gamma_0} \widehat u \widehat \phi \frac{\partial \gamma}{\partial t} \, d \widehat{s} \, dt \big|
  + \big| \int_0^T \int_{\Gamma_0} \widehat u \widehat \phi { \widehat{\DivG \bw}}\, \gamma\, d \widehat{s} \, dt \big|
\\ & \lesssim \|\widehat u \|_{\widehat{W}} \|\widehat \psi\|_{\widehat{H}} + \|\widehat u \|_{\widehat{H}} \|\widehat \phi\|_{\widehat{H}} \lesssim \|\widehat u \|_{\widehat{W}} \|\phi\|_H.
\end{split} \end{equation*}
Hence, $u \in W$ and $\|u\|_W \lesssim \|\widehat u\|_{\widehat{W}}$ holds. With similar arguments one can show that if $u \in W$, then $\widehat u\in \widehat{W}$ and
$ \|\widehat u\|_{\widehat{W}} \lesssim \|u\|_W$ holds. For this, instead of the surface transformation formula
\eqref{fromtrans1}  one starts with the formula
\begin{equation}\label{wg}
 d\,\widehat{s}=
 \frac{|\mu_x \times \mu_y|}{| \nabla_\Gamma \Phi(\Phi^{-1}(\cdot,t),t) \mu_x \times \nabla_\Gamma \Phi(\Phi^{-1}(\cdot,t),t) \mu_y|}\, ds = :\widetilde\gamma(\cdot,t) \, ds,
\end{equation}
with $\mu_x=\mu_x\big((\Phi\circ \mu)^{-1}(\cdot)\big)$, and similarly for $\mu_y$.
For $\widehat u \in \widehat{H}$ and $\widehat \phi \in C_0^1(\Gamma_0 \times (0,T))$ we have
\[ \la\frac{\partial \widehat u}{\partial t}, \widehat \phi\ra= -  \int_0^T \int_{\Gamma_0} \widehat u \frac{\partial \widehat \phi}{\partial t} \, d\,\widehat s \, dt=
- \int_0^T \int_{\Gamma(t)} u \dot \phi  \widetilde \gamma \, ds \, dt
\]
Now we note that $\widetilde\gamma$ is $C^1$-smooth on $\Gs$. To check this, due to the $C^2$-diffeomorphism property of $\Phi$ it is sufficient to show that the denominator in \eqref{wg} is uniformly bounded away from zero on $\Gs$.
For ${ \tilde x} \in \Gamma(t)$ and with  ${\tilde x}=(\Phi \circ \mu)(z)$ one can rewrite the denominator as
\begin{equation}\label{aux1} \begin{split}
  | \nabla_\Gamma \Phi(\Phi^{-1}(\tilde x,t),t) \mu_x(z) \times \nabla_\Gamma \Phi(\Phi^{-1}(\tilde x,t),t) \mu_y(z)| \\  =  | \nabla_\Gamma \Phi(\mu(z),t) \mu_x(z) \times \nabla_\Gamma \Phi(\mu(z),t) \mu_y(z)|   = |\frac{\partial }{\partial x}(\Phi \circ \mu)(z) \times\frac{\partial }{\partial y}(\Phi \circ \mu)(z)|.
\end{split}
\end{equation}
 From the fact that $\Phi\circ \mu$ is a $C^1$-smooth parametrization of $\Gamma(t)$ it follows that the quantity on the right-hand side of \eqref{aux1} is uniformly bounded away from zero.
Hence, the function $\widetilde\gamma$ is $C^1$-smooth and we can use the same arguments as above to derive $ \|\widehat u\|_{\widehat{W}} \lesssim \|u\|_W$.
This implies that $\widehat u \to u$ is a  homeomorphism between $\widehat{W}$ and $W$.
 \end{proof}

\subsection{Properties of $H$ and $W$} \label{secttHW}

The homeomorphism  established in Section \ref{secHomo} helps us to  derive  density results for the spaces $H$ and $W$  and a trace property similar to the one
in \eqref{trace1}.

\begin{lemma} \label{PropH} $H$ is a Hilbert space.
The space $C_0^1(\Gs)$ is dense in $H$. The spaces $\{H,L^2(\Gs),H'\}$ form a Gelfand triple.
\end{lemma}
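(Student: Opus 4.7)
The strategy is to transfer everything from the already-understood Bochner space $\widehat{H}$ to $H$ via the linear homeomorphism $\widehat{u}\mapsto u$ of Lemma~\ref{homoH}, and then to bootstrap the Gelfand triple property from the density of smooth functions.

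\medskip

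\textbf{Step 1 (Hilbert structure).} The inner product \eqref{inner} is well-defined on $H$ by the definition of $H$ and by the equivalence between the two norms on $L^2(\mathcal{S})$ recorded in \eqref{transform}--\eqref{defH}. It is straightforward to verify that $(\cdot,\cdot)_H$ is bilinear, symmetric and positive definite. Only completeness requires an argument, and for this I invoke Lemma~\ref{homoH}: $\widehat{H}=L^2(0,T;H^1(\Gamma_0))$ is a standard Hilbert space, and the linear homeomorphism $\widehat{u}\mapsto u$ transports Cauchy sequences and limits between the two spaces. Hence any Cauchy sequence in $H$ is the image of a Cauchy sequence in $\widehat{H}$, which has a limit in $\widehat{H}$, whose image lies in $H$ and is the required limit.

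\medskip

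\textbf{Step 2 (Density of $C_0^1(\mathcal{S})$).} By Lemma~\ref{lemma1}, $C_0^\infty(\Gamma_0\times(0,T))$ is dense in $\widehat{H}$. Given $u\in H$, set $\widehat{u}=u\circ F\in\widehat{H}$ and approximate it in $\widehat{H}$-norm by a sequence $\widehat{\phi}_n\in C_0^\infty(\Gamma_0\times(0,T))$. Because $F$ is a $C^2$-diffeomorphism, $\phi_n:=\widehat{\phi}_n\circ F^{-1}$ has compact support in $\mathcal{S}$ (away from $t=0$ and $t=T$) and is at least $C^1$ on $\mathcal{S}$, i.e.\ $\phi_n\in C_0^1(\mathcal{S})$. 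Since $\widehat{u}\mapsto u$ is a homeomorphism between $\widehat{H}$ and $H$, convergence $\widehat{\phi}_n\to\widehat{u}$ in $\widehat{H}$ gives $\phi_n\to u$ in $H$. The only mildly technical point is the regularity of the composition; this follows because $\Phi$ and $\Phi^{-1}$ are $C^2$ in both arguments and because the chain rule together with the uniform bounds used in the proof of Lemma~\ref{homoH} keeps the $H^1$-norms on each time slice comparable.

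\medskip

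\textbf{Step 3 (Gelfand triple).} The continuous embedding $H\hookrightarrow L^2(\mathcal{S})$ is immediate from $\|v\|_{L^2(\mathcal{S})}\le c\,\|v\|_H$, which holds because of the norm equivalence noted after \eqref{transform}. For density of $H$ in $L^2(\mathcal{S})$, note that $C_0^1(\mathcal{S})\subset H$ and $C_0^1(\mathcal{S})$ is dense in $L^2(\mathcal{S})$ (standard result for the smooth oriented manifold $\mathcal{S}$; alternatively, pull back to $\Gamma_0\times(0,T)$ via $F$ where density of $C_0^\infty$ in $L^2$ is classical, and use the $C^2$-diffeomorphism to transfer the result). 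Identifying $L^2(\mathcal{S})$ with its own dual in the usual way, the adjoint of the continuous dense embedding $H\hookrightarrow L^2(\mathcal{S})$ is a continuous dense embedding $L^2(\mathcal{S})\hookrightarrow H'$; this is the standard abstract construction of a Gelfand triple from a continuous dense Hilbert space embedding.

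\medskip

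\textbf{Anticipated main obstacle.} Steps 1 and 3 are essentially abstract nonsense once Step 2 is in place. The real content is Step 2, and within it the verification that smooth compactly supported functions on the reference cylinder $\Gamma_0\times(0,T)$ pull forward to $C_0^1(\mathcal{S})$ with norms comparable under $\widehat{H}\leftrightarrow H$. This is exactly what the $C^2$-diffeomorphism assumption on $F$ is designed to buy us, and Lemma~\ref{homoH} already contains the quantitative estimates needed to close the argument.
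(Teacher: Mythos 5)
Your proposal is correct and follows essentially the same route as the paper: completeness is transferred from $\widehat{H}$ via the homeomorphism of Lemma~\ref{homoH}, density of $C_0^1(\Gs)$ is obtained by pushing forward the dense set $C_0^\infty(\Gamma_0\times(0,T))$ of Lemma~\ref{lemma1} through the $C^2$-diffeomorphism $F$, and the Gelfand triple property follows from the density of $C_0^1(\Gs)$ in $L^2(\Gs)$. The only difference is that you spell out the abstract adjoint construction for the triple and the chain-rule regularity check in slightly more detail than the paper does.
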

\begin{proof} Let $L\,:\, \widehat{H}\to H$ denote the mapping given in \eqref{homo}.
Since $L$ is a linear homeomorphism between, the space $H$ is complete and so this is a Hilbert space.
For $\widehat \phi \in C_0^1(\Gamma_0 \times (0,T)) \subset \widehat{H}$ we have, due to the smoothness assumptions on $F$, that $\phi = L \widehat \phi \in C^1(\Gs)$. Furthermore, from $\phi(x,t)=\widehat \phi(\Phi^{-1}(x,t),t)$ it follows that $\phi$ has compact support. Hence, $\phi \in C_0^1(\Gs)$. From this we get $L\big[ C_0^1(\Gamma_0 \times (0,T))\big] \subset C_0^1(\Gs)$. Since $ C_0^1(\Gamma_0 \times (0,T))$ is dense in $\widehat{H}$ and $L$ is a homeomorphism, this implies that $C_0^1(\Gs)$ is dense in $H$. Since $C_0^1(\Gs)$ is also dense in $L^2(\Gs)$, the space $H$ is dense in $L^2(\Gs)$.
Hence, $\{H,L^2(\Gs),H'\}$ is a Gelfand triple.
\end{proof}
\medskip

For $t \in [0,T]$ and $u \in C^1(\Gs)$ denote by $u \to u_{|\Gamma(t)}$ a trace operator:  $u_{|\Gamma(t)}(x)=u(x,t)$, $x \in \Gamma(t)$.
In Section~\ref{ssectDG} we analyze a discontinuous Galerkin method in time. For such a method, one needs well defined traces of this type.
For a smooth function $\widehat{u}(x,t)$ defined on the cylinder $\Gamma_0\times[0,T]$, it is obvious that
one can define at any time $t\in[0,T)$, the right limit $\widehat{u}_{+}(\cdot,t)=\lim\limits_{\delta\to+0}{ \widehat{u}}(\cdot, t+\delta)$ on $\Gamma_0$. Similarly a left limit function $\widehat{u}_{-}(\cdot,t)$ is defined for  $t\in(0,T]$.
For a sufficiently smooth function $u$  on $\Gs$, due to the fact that the domain $\Gamma(t)$ where the trace has to be defined varies with $t$,  it is less straightforward to  construct such left and right limit functions.  To this end, for $u \in C^1(\Gs)$ and a given $t \in [0,T]$ we define $u_\delta: \, \Gamma(t) \to \Bbb{R}$ by
\begin{equation} \label{conti}
   u_\delta(\cdot,t):= u\big(F(\Phi^{-1}(\cdot, t),t+\delta)\big),~~\delta~\text{such that}~~t+ \delta \in (0,T).
\end{equation}
Note that { $u_{\delta}=u_{|\Gamma(t)}$ holds when $\delta=0$}.
Right and left limits on $\Gamma(t)$ are defined as
\begin{equation} \label{conti2}
{u}_{+}(\cdot,t)  =\lim\limits_{\delta\to+0}u_\delta(\cdot,t)\quad\text{for}~t\in [0,T), \qquad
{u}_{-}(\cdot,t) =\lim_{\delta\to-0}u_\delta(\cdot,t)\quad\text{for}~t\in (0,T].
\end{equation}
Below we show that for functions from $W$ the trace and one-sided limits are well-defined and can be considered as  elements of $L^2(\Gamma(t))$.\smallskip

The next theorem gives several important properties for our trial space.\smallskip

\begin{theorem} \label{PropW}  $W$ is a Hilbert space and has the following properties:
\begin{description}
\item(i)~$C^1(\Gs)$ is dense in $W$.
\item(ii)~For every $t \in [0,T]$ the trace operator $u \to u_{|\Gamma(t)}$ can be extended to a bounded linear operator from $W$ to $L^2(\Gamma(t))$.  Moreover, the inequality
\begin{equation} \label{rrt}
 \max_{0 \leq t\leq T} \|u_{|\Gamma(t)}\|_{L^2(\Gamma(t))} \leq c \|u\|_W \quad \text{for all}~~ u \in W,
\end{equation}
holds with a constant $c$ independent of $u$.
\item(iii)~Take $t \in [0,T)$ and let $\ep >0$ be sufficiently small, such that $t+\ep\le T$.   For any $u \in W$ the mapping
$\delta \to u_\delta(\cdot,t)$ defined in \eqref{conti} is continuous from $[0,\ep]$ into $L^2(\Gamma(t))$. The same assertion is true for $t \in (0,T]$ and suitable $\ep<0$. For $u \in W$ the one-sided limits \eqref{conti2} are well-defined.
\end{description}
\end{theorem}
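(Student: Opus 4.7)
The plan is to transfer all three statements from the Bochner space $\widehat{W}$ to $W$ via the homeomorphism $L:\widehat u\mapsto u$ of Lemma~\ref{homoW}, using Lemma~\ref{lem4} as the source of the analogous facts on $\widehat W$. Throughout, I denote the pull-back of $u\in W$ as $\widehat u=u\circ F\in\widehat W$ and use the surface transformation \eqref{fromtrans1} with Jacobian $\gamma$, which is $C^1$-smooth and bounded away from $0$ on $\Gamma_0\times[0,T]$.

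For part (i), completeness of $W$ follows at once from completeness of $\widehat W$ because $L$ is a linear homeomorphism, so $W$ is a Hilbert space with the scalar product inducing $\|\cdot\|_W$. For density, I observe that each element of the set $\mathcal C$ in Lemma~\ref{lem4} maps under $L$ into $C^1(\Gs)$: indeed, $L(\sum_i t^i\phi_i)(x,t)=\sum_i t^i\phi_i(\Phi^{-1}(x,t))$ is $C^1$ on $\Gs$ since $\Phi^{-1}$ and the $\phi_i$ are. Because $\mathcal C$ is dense in $\widehat W$ and $L$ is a homeomorphism, $L[\mathcal C]\subset C^1(\Gs)$ is dense in $W$, hence so is $C^1(\Gs)$.

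For part (ii), fix $t\in[0,T]$ and let $u\in C^1(\Gs)$. Using \eqref{fromtrans1} at time $t$,
\begin{equation*}
\|u_{|\Gamma(t)}\|_{L^2(\Gamma(t))}^2 = \int_{\Gamma_0} |\widehat u(\cdot,t)|^2\,\gamma(\cdot,t)\,d\widehat s \lesssim \|\widehat u(\cdot,t)\|_{L^2(\Gamma_0)}^2,
\end{equation*}
with a constant independent of $t$ by uniform bounds on $\gamma$. Lemma~\ref{lem4} gives $\max_{t}\|\widehat u(\cdot,t)\|_{L^2(\Gamma_0)}\lesssim \|\widehat u\|_{\widehat W}$, and Lemma~\ref{homoW} yields $\|\widehat u\|_{\widehat W}\simeq\|u\|_W$. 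Combining these proves \eqref{rrt} on the dense subspace $C^1(\Gs)\subset W$ from (i); the trace operator then extends uniquely by continuity to all of $W$.

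For part (iii), the key observation is that for $u\in C^1(\Gs)$ and $y=\Phi^{-1}(x,t)$,
\begin{equation*}
u_\delta(x,t)=u\bigl(\Phi(y,t+\delta),t+\delta\bigr)=\widehat u(y,t+\delta).
\end{equation*}
Therefore, again by \eqref{fromtrans1},
\begin{equation*}
\|u_{\delta_1}(\cdot,t)-u_{\delta_2}(\cdot,t)\|_{L^2(\Gamma(t))}^2\lesssim \|\widehat u(\cdot,t+\delta_1)-\widehat u(\cdot,t+\delta_2)\|_{L^2(\Gamma_0)}^2,
\end{equation*}
uniformly in $\delta_1,\delta_2$. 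For $u\in W$, approximating by a sequence $u^n\in C^1(\Gs)$ from (i) and invoking Lemma~\ref{lem4} for each $\widehat{u^n}\in\widehat W$ (continuity of $s\mapsto \widehat{u^n}(\cdot,s)$ into $L^2(\Gamma_0)$), together with the uniform trace bound from (ii), shows that $\delta\mapsto u_\delta(\cdot,t)$ is a uniform limit in $C([0,\varepsilon];L^2(\Gamma(t)))$ (respectively $C([-|\varepsilon|,0];L^2(\Gamma(t)))$ for the left side) of continuous maps, hence continuous. In particular, the one-sided limits $u_\pm(\cdot,t)$ exist in $L^2(\Gamma(t))$.

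The main subtlety will be in part (iii): one must verify carefully that the identification $u_\delta(x,t)\leftrightarrow\widehat u(y,t+\delta)$ together with the time-$t$ (not time-$t+\delta$) Jacobian $\gamma(\cdot,t)$ really converts the $L^2(\Gamma(t))$ norm of differences into a bounded multiple of the $L^2(\Gamma_0)$ norm of the corresponding differences of the pull-back, so that the classical Bochner continuity in Lemma~\ref{lem4} can be applied. The rest is a routine density/extension argument.
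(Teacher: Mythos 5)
Your proposal is correct and follows essentially the same route as the paper: completeness and (i) via the homeomorphism $L$ and the dense set $\mathcal{C}$ of Lemma~\ref{lem4}, (ii) via the time-$t$ transformation \eqref{fromtrans1} combined with \eqref{trace1} and Lemma~\ref{homoW} plus a density extension, and (iii) via the identification $u_\delta(x,t)=\widehat u(\Phi^{-1}(x,t),t+\delta)$ and the same transformation inequality with the time-$t$ Jacobian. The only difference is in (iii): the paper applies the continuity statement of Lemma~\ref{lem4} directly to $\widehat u\in\widehat W$ (that statement holds for all of $\widehat W$, not only for pull-backs of $C^1(\Gs)$ functions), so your additional approximation by a sequence from $C^1(\Gs)$ is harmless but unnecessary.
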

\begin{proof}
Since the mapping $L: \widehat u \to u$ given by \eqref{homo} is a linear homeomorphism between $\widehat{W}$ and $W$, the space $W$ is complete and so this is a Hilbert space.

(i)~Let $\mathcal{C}$ be the set as in Lemma~\ref{lem4}, which is dense in $\widehat{W}$. One easily checks $L(\mathcal{C}) \subset C^1(\Gs)$. Since $L(\mathcal{C})$ is dense in $W$, this implies that $C^1(\Gs)$ is dense in $W$.

(ii)~Take $u \in C^1(\Gs)$ and define $u_{|\Gamma(t)}:=u(\cdot, t) \in L^2(\Gamma(t))$.  Using \eqref{fromtrans1}, Lemma~\ref{lem4} and Lemma~\ref{homoW} we get
\[
  \|u_{|\Gamma(t)}\|_{L^2(\Gamma(t))} \leq c \|\widehat u(t)\|_{L^2(\Gamma_0)} \leq c \|\widehat u\|_{\widehat{W}} \leq c \|u\|_W,
\]
where the constant $c$ can be assumed to be independent of $t$ due to the smoothness of $\gamma$ in \eqref{fromtrans1}. From this, the result in \eqref{rrt} follows by a density argument.

(iii)~Take a fixed $t \in [0,T)$ and sufficiently small $\ep>0$. Take $\delta_1, \delta_2 \in [0,\ep]$. For $x \in \Gamma(t)$ we use the substitution  $y:=\Phi^{-1}(x,t) \in \Gamma_0$ and the integral transformation formula as in the proof of Lemma~\ref{homoH}, resulting in:
\begin{align*}
\|u_{\delta_1}(\cdot,t)-u_{\delta_2}(\cdot,t)\|_{L^2(\Gamma(t))}
&  = \|u \big(F(\Phi^{-1}(\cdot,t),t+\delta_1)\big) - u \big(F(\Phi^{-1}(\cdot,t),t+\delta_2)\big)\|_{L^2(\Gamma(t))} \\
 &  \leq c \|\widehat u (\cdot ,t+\delta_1)-\widehat u(\cdot,t+\delta_2)\|_{L^2(\Gamma_0)},
\end{align*}
with a constant $c$ independent of $t$. Hence, the continuity of the mapping $\delta \to u_\delta(\cdot,t)$  follows from the continuity result for $\widehat u$ in Lemma~\ref{lem4}. Due to the continuity of the mappings, the one-sided limits are well-defined.
 \end{proof}

\begin{corollary} \label{Partialint}
 For all $u,v \in W$, the integration by parts  identity holds:
\begin{equation} \label{partint}
\begin{split}
 \la\dot u,v\ra &+\la\dot v,u\ra + \int_0^T \int_{\Gamma(T)}u v \DivG \bw \, ds\, dt\\  &  = \int_{\Gamma(T)} u(s,T) v(s,T)\, ds - \int_{\Gamma(0)} u(s,0) v(s,0)\, ds.
\end{split}
\end{equation}
\end{corollary}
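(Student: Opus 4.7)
The plan is to prove \eqref{partint} by density, starting from the smooth identity \eqref{ByParts} and extending it to all of $W$ by invoking the three conclusions of Theorem~\ref{PropW}. By part~(i), $C^1(\Gs)$ is dense in $W$, so for any $u,v \in W$ I choose sequences $u_n,v_n\in C^1(\Gs)$ with $u_n\to u$ and $v_n\to v$ in $W$. Since \eqref{ByParts} already holds for each pair $(u_n,v_n)$, and for smooth arguments $\int_0^T\int_{\Gamma(t)} \dot u_n v_n\,ds\,dt=\la\dot u_n,v_n\ra$ (the remark after \eqref{weakmaterial}), it suffices to pass to the limit in the five terms of \eqref{partint}.

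For the duality pairings I would split
\[
\la\dot u_n,v_n\ra-\la\dot u,v\ra=\la\dot u_n-\dot u,v_n\ra+\la\dot u,v_n-v\ra,
\]
which tends to $0$ since $\|\dot u_n-\dot u\|_{H'}\to 0$, $\|v_n\|_H$ is bounded, and $\|v_n-v\|_H\to 0$; the term $\la\dot v_n,u_n\ra\to\la\dot v,u\ra$ is handled symmetrically. For the zero-order bulk integral, the smoothness assumption on $F$ makes $\DivG\bw$ bounded on $\Gs$, and $u_nv_n\to uv$ in $L^1(\Gs)$ by Cauchy--Schwarz together with the $L^2(\Gs)$-convergence implied by $H$-convergence; thus the middle term converges to $\int_0^T\int_{\Gamma(t)}uv\DivG\bw\,ds\,dt$.

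For the two boundary terms, I invoke the trace estimate \eqref{rrt}: $\|u_n(\cdot,t)-u(\cdot,t)\|_{L^2(\Gamma(t))}\le c\|u_n-u\|_W\to 0$ at $t\in\{0,T\}$, and analogously for $v_n$. Another Cauchy--Schwarz application on $\Gamma(t)$ gives
\[
\int_{\Gamma(t)}u_n(s,t)v_n(s,t)\,ds\ \to\ \int_{\Gamma(t)}u(s,t)v(s,t)\,ds,\qquad t\in\{0,T\}.
\]
Collecting the five limits yields \eqref{partint}.

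There is no serious obstacle: the argument is a routine density closure. The only point that requires some care is making sure the approximants $u_n,v_n$ genuinely possess the smooth traces at $t=0$ and $t=T$ used in \eqref{ByParts}, but this follows from Lemma~\ref{lem4} and Lemma~\ref{homoW}, since the dense set $L(\mathcal{C})\subset W$ consists of functions of the form $\sum_i t^i(\phi_i\circ\Phi^{-1})$ which are $C^1$ up to the temporal boundary of $\Gs$.
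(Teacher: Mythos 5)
Your proposal is correct and follows exactly the route the paper intends: its proof of Corollary~\ref{Partialint} is the one-line remark that the result ``follows from the identity \eqref{ByParts} and Theorem~\ref{PropW}'', i.e.\ precisely the density closure you carry out (density of $C^1(\Gs)$ in $W$, continuity of the pairing $\la\dot u,v\ra$ in $\|\cdot\|_W\times\|\cdot\|_H$, and the uniform trace bound \eqref{rrt} for the boundary terms). Your final remark that the dense set $L(\mathcal{C})$ consists of functions $C^1$ up to $t=0$ and $t=T$ is a legitimate and worthwhile detail that the paper leaves implicit.
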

\begin{proof}
Follows from the identity \eqref{ByParts} and Theorem~\ref{PropW}.
\end{proof}

\section{Well-posedness of  weak formulation} \label{sec4}

Using the properties of $H$ and $W$ derived above, we prove a well-posedness result for the weak space-time formulation \eqref{weakSpaceTime} of the surface transport-diffusion equation \eqref{transport}. The analysis uses the LBB approach and is along the same lines as presented for a parabolic problem on a fixed Euclidean domain in \cite{Ern04} (Section 6.1).
As usual, we first transform the problem \eqref{transport} to ensure that the initial condition
is homogeneous. To this end, consider the decomposition of the solution $u=\widetilde{u}+u^0$, where $u^0:\ \Gs \to \Bbb{R}$ is chosen sufficiently smooth and such that
\[
u^0(x,0)=u_0(x)\quad\text{{on $\Gamma_0$,} and}\quad\frac{d}{dt}\int_{\Gamma(t)} u^0 \, ds =0.
\]
One can set,  e.g., $u^0=(u_0\circ\Phi^{-1})(\gamma\circ F^{-1})^{-1}$,
with $\gamma$ from \eqref{fromtrans1}. Since the solution of \eqref{transport} has the  mass conservation property $\frac{d}{dt}\int_{\Gamma(t)} u\,ds=0$, and $\int_{\Gamma(0)} u^0\,ds= \int_{\Gamma(0)} u\,ds$ by the choice of $u^0$, the new unknown function $\widetilde{u}$ satisfies $\tilde u(\cdot,0)=0$ on $\Gamma_0$ and
\begin{equation}\label{average}
\int_{\Gamma(t)} \widetilde{u}\,ds=0\quad\text{for all}~t\in [0,T].
\end{equation}
For this transformed function the diffusion equation takes the form
\begin{equation}
\begin{aligned}
\dot{\widetilde{u}} + ({\Div}_\Gamma\bw)\widetilde{u} -{\nu_d}\Delta_{\Gamma} \widetilde{u}&=f\qquad\text{on}~~\Gamma(t), ~~t\in (0,T],\\
\widetilde{u}(\cdot,0)&=0\qquad\text{on}~~\Gamma_0.
\end{aligned}
\label{transport1}
\end{equation}
The right-hand side is now non-zero: $f:=-\dot{u^0} - ({\Div}_\Gamma\bw)u^0 + {\nu_d}\Delta_{\Gamma}u^0 \in H'$.
Using \eqref{partint}  and the integration by parts over $\Gamma(t)$, one immediately finds
$\la f,\phi\ra=0$ for $\phi\in C_0^1(0,T)$. For a more regular source function, $f\in L^2(\Gs)$, this implies $\int_{\Gamma(t)} f\,ds=0$ for almost  all $t\in [0,T]$.

In the analysis below, instead of the (transformed) diffusion problem \eqref{transport1} we consider the following
 slightly more general surface PDE:
\begin{equation}
\begin{aligned}
\dot{u} + \alpha\, {u} -{\nu_d}\Delta_{\Gamma} {u}&=f\qquad\text{on}~~\Gamma(t), ~~t\in (0,T],\\
{u}&=0\qquad\text{on}~~\Gamma_0,
\end{aligned}
\label{transport2}
\end{equation}
with $\alpha\in L^\infty(\Gs)$ and a generic right-hand side $f\in H'$, not necessarily satisfying the zero integral condition. We use the notation $\alpha_{\infty}:=\|\alpha\|_{L^\infty(\Gs)}$.
\medskip

We define the inner product and symmetric bilinear form
\[
(u,v)_0=\int_0^T \int_{\Gamma(t)} u v \, ds \, dt,\quad
  a(u,v)= \nu_d (\nablaG u, \nablaG v)_0 + (\alpha u,v)_0, \quad u, v \in H.
\]
This bilinear form is  continuous on $H\times H$:
\begin{equation}\label{eq:continuity}
a(u,v)\le(\nu_d+\alpha_{\infty}) \|u\|_H\|v\|_H.
\end{equation}
Consider the subspace of $W$ of all function vanishing for $t=0$:
\[
\Wo:=\{\, v \in W~|~v(\cdot, 0)=0 \quad \text{on}~\Gamma_0\,\}.
\]
The space $\Wo$ is well-defined,
since functions from $W$ have well-defined traces on $\Gamma(t)$ for any $t\in[0,T]$, see Theorem~\ref{PropW}.
The weak space-time formulation of \eqref{transport2} reads: Given $f\in H'$, find $u \in \Wo$ such that
\begin{equation} \label{weakformu}
 \la\dot u,v\ra +a (u,v) =\la f,v\ra \quad \text{for all}~~v \in H.
\end{equation}

In the remainder of this section we prove that this variational problem  is well-posed.
Our analysis is based on the continuity and inf-sup conditions, cf.~\cite{Ern04}. The continuity property is straightforward:
\[
  | \la\dot u,v\ra + a(u,v)| \le (1+\nu_d+\alpha_{\infty})\|u\|_W \|v\|_H \quad \text{for all}~~u \in W,~v \in H.
\]

The next two lemmas are crucial for proving the well-posedness of  \eqref{weakformu}.

\begin{lemma}\label{la:infsup}
The inf-sup inequality
 \begin{equation}\label{infsup}
  \inf_{0\neq u \in \Wo}~\sup_{ 0\neq v \in \overset{\phantom{.}}{H}} \frac{\la\dot u,v\ra + a(u,v)}{\|u\|_W\|v\|_H} \geq c_s
 \end{equation}
holds with some $c_s>0$.
\end{lemma}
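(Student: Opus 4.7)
My plan is to follow the classical two-test-function construction for parabolic inf-sup conditions (in the spirit of \cite{Ern04}, Section 6.1), adapted to the moving surface setting by using the integration-by-parts identity of Corollary~\ref{Partialint} together with an exponential-in-time weight to control the indefinite zero-order contributions coming from $\alpha$ and $\DivG\bw$.

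Step 1: I would introduce two candidate test functions. The first is a weighted copy of $u$:
\[
 v_1 := e^{-2\beta t}\,u\in H,\qquad \beta := \alpha_\infty+\tfrac12\|\DivG\bw\|_{L^\infty(\Gs)}+1.
\]
(In fact $v_1\in W$ when $u\in W$, which legitimises the integration-by-parts step below.) The second is the Riesz representative of $\dot u$: let $J\colon H'\to H$ be the Riesz isomorphism, and put $v_2:=J\dot u\in H$, so that $\langle\dot u,v_2\rangle=\|\dot u\|_{H'}^2$ and $\|v_2\|_H=\|\dot u\|_{H'}$.

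Step 2: Apply Corollary~\ref{Partialint} to the pair $(u,v_1)$, using $u(\cdot,0)=0$ and $\dot v_1=e^{-2\beta t}\dot u-2\beta e^{-2\beta t}u$. After simplification this yields
\[
 \langle\dot u,v_1\rangle=\tfrac12 e^{-2\beta T}\|u(\cdot,T)\|_{L^2(\Gamma(T))}^2 + \int_0^T\!\!\int_{\Gamma(t)}e^{-2\beta t}\Big(\beta-\tfrac12\DivG\bw\Big)u^2\,ds\,dt.
\]
Adding $a(u,v_1)=\int_0^T\!\int_{\Gamma(t)}e^{-2\beta t}(\nu_d|\nablaG u|^2+\alpha u^2)\,ds\,dt$ and using the choice of $\beta$ so that $\alpha+\beta-\tfrac12\DivG\bw\ge 1$, one obtains
\[
 \langle\dot u,v_1\rangle+a(u,v_1)\ \ge\ C_1\,\|u\|_H^2,\qquad C_1:=e^{-2\beta T}\min(\nu_d,1),
\]
while $\|v_1\|_H\le\|u\|_H$.

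Step 3: For $v_2$ I use the continuity bound \eqref{eq:continuity}:
\[
 \langle\dot u,v_2\rangle+a(u,v_2)\ \ge\ \|\dot u\|_{H'}^2-(\nu_d+\alpha_\infty)\|u\|_H\|\dot u\|_{H'},\qquad \|v_2\|_H=\|\dot u\|_{H'}.
\]
Now I test with $v:=v_1+\lambda v_2\in H$ for a small $\lambda>0$ to be chosen. Combining the two estimates and applying Young's inequality to absorb the cross term gives
\[
 \langle\dot u,v\rangle+a(u,v)\ \ge\ \Big(C_1-\tfrac{\lambda(\nu_d+\alpha_\infty)^2}{2}\Big)\|u\|_H^2+\tfrac{\lambda}{2}\|\dot u\|_{H'}^2.
\]
Choosing $\lambda:=C_1/(\nu_d+\alpha_\infty)^2$ makes the coefficient of $\|u\|_H^2$ equal to $C_1/2$, and together with $\|v\|_H\le(1+\lambda)\|u\|_W$ one concludes
\[
 \frac{\langle\dot u,v\rangle+a(u,v)}{\|u\|_W\|v\|_H}\ \ge\ \frac{\min(C_1/2,\lambda/2)}{1+\lambda}=:c_s>0,
\]
which is the required inf-sup inequality.

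The main obstacle is that $a(\cdot,\cdot)$ is not coercive on $H$ (the terms with $\alpha$ and with $\DivG\bw$ appearing after integration by parts have no definite sign), so a naive choice $v=u$ gives only $\nu_d\|\nablaG u\|_0^2$ plus an indefinite $L^2$ part. The exponential weight $e^{-2\beta t}$ is precisely what is needed to shift the zero-order coefficient by $+\beta$ and make the $L^2$-in-space-time part strictly positive; everything else (the Riesz-representative trick for $\|\dot u\|_{H'}$ and the Young's-inequality balancing) is standard. The constant $c_s$ will unavoidably deteriorate exponentially in $T$, which is expected for this type of problem.
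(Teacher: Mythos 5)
Your proposal is correct and follows essentially the same route as the paper's proof: an exponentially weighted copy of $u$ to recover coercivity of the first-order-plus-$a$ form in the $H$-norm (the paper uses $u_\gamma=e^{-\gamma t}u$ with $\gamma=2(\nu_d+\|\alpha-\tfrac12\DivG\bw\|_{L^\infty(\Gs)})$, functionally the same as your $e^{-2\beta t}u$), the Riesz representative of $\dot u$ to control $\|\dot u\|_{H'}$, and a linear combination of the two balanced by Young's inequality. The only differences are cosmetic (you place a small weight $\lambda$ on the Riesz part where the paper places a large weight $\mu$ on the weighted part, and your constants differ slightly), so no further comparison is needed.
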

\begin{proof}
Take $u\in \Wo$. In \eqref{weakformu} we take a test function $v=u_\ga:= e^{-\gamma t}u \in \Wo$, with $\gamma:=2(\nu_d+\|\alpha-\frac12\DivG \bw\|_{L^\infty(\Gs)})$. We note the identity:
\begin{equation} \label{hp}
 \la \dot u_\gamma,u\ra= \la \dot u, u_\gamma \ra - \gamma(u, u_\gamma)_0.
\end{equation}
 From \eqref{hp}, \eqref{partint}, and condition $u(0)=0$, we infer
\[
  \la \dot u, u_\gamma \ra = \frac12\big(\la \dot u, u_\gamma \ra +  \la \dot u_\gamma,u\ra \big) + \frac12 \gamma(u, u_\gamma)_0
\geq -\frac12 (u, u_\gamma  \DivG \bw)_0 + \frac12 \gamma(u, u_\gamma)_0.
\]
This and the choice of $\gamma$ implies
$
  \la \dot u, u_\gamma \ra+ (\alpha u, u_\gamma)_0 \geq \nu_d(u, u_\gamma)_0 \geq \nu_d e^{-\gamma T} \|u\|_0^2.
$
Combining this with $(\nablaG u,\nablaG u_\gamma)_0 \geq  e^{-\gamma T} \|\nabla_\Gamma u\|_0^2$, we get
\begin{equation}\label{eq:proofinfsup1}
  \la\dot{u},u_\ga\ra +  a(u,u_\ga) \ge \nu_d e^{-\ga T}\|u\|_H^2.
\end{equation}
This establishes the control of $\|u\|_H$ on the right-hand side of the inf-sup inequality. We also need control of $\|\dot u\|_{H'}$ to bound the full norm $\|u\|_W$.  This is achieved  by using a duality argument between the Hilbert spaces $H$ and $H'$.

By Riesz' representation theorem, there is a unique $z\in H$ such that $\la\dot u,v\ra = (z, v)_H$ for all $v\in H$, and $\|z\|_H = \|\dot{u}\|_{H'}$ holds. Thus we obtain
\[
  \la\dot{u},z\ra = (z,z)_H = \|\dot{u}\|_{H'}^2.
\]
Therefore, with the help of~\eqref{eq:continuity}, we get
\begin{equation}\label{eq:proofinfsup2}
 \begin{split}
  \la \dot{u},z\ra +  a(u, z) & = \|z\|_H^2 +  a(u,z) \ge \|z\|_H^2 - \frac{c_1}2  \|u\|_H^2 - \frac12\|z\|_H^2 \\
   & = \frac12\|\dot{u}\|_{H'}^2 - \frac{c_1}2  \|u\|_H^2,\qquad \text{with}~c_1=(\nu_d+\alpha_{\infty})^2.
\end{split}
\end{equation}
This establishes control of $\|\dot{u}\|_{H'}$ at the expense of the $H$-norm, which is controlled in \eqref{eq:proofinfsup1}.
Therefore, we make the ansatz $v= z + \mu u_\ga\in H$ for some sufficiently large parameter $\mu \geq 1$. We have the estimate
\begin{equation}\label{aux20}
\|v\|_H\le \|z\|_H + \mu\|u_\ga\|_H \le \|\dot u\|_{H'} + \mu\|u\|_H\le \mu \sqrt{2} \|u\|_W.
\end{equation}
 From \eqref{eq:proofinfsup1}, \eqref{eq:proofinfsup2}, and \eqref{aux20} we conclude
\[
  \la\dot u,v\ra +  a(u,v) \ge \frac12\|\dot{u}\|_{H'}^2 +(\mu \nu_d e^{-\ga T} - \frac{c_1}2)\|u\|_H^2.
\]
Taking $\mu:=\frac1{2\nu_d}(c_1+1)e^{\gamma T}$, we get
\[
   \la\dot u,v\ra +  a(u,v) \ge \frac12\|u\|_W^2 \ge \frac{\sqrt{2}}{4} \mu^{-1}\|u\|_W\|v\|_H.
\]
This completes the proof.
\end{proof}

\begin{remark} \label{remstab} \rm
A closer look at the proof reveals that the stability constant $c_s$ in the inf-sup condition \eqref{infsup} can be taken
as
\[
c_s= \frac{\nu_d}{\sqrt{2}}(1+\nu_d+\alpha_{\infty})^{-2} e^{-2T(\nu_d+\tilde c)},\quad \tilde c =\|\alpha-\frac12\DivG \bw\|_{L^\infty(\Gs)}.
\]
This stability constant deteriorates if $\nu_d \downarrow 0$ or $T \to \infty$. We do not consider the singularly perturbed case of vanishing diffusion. 
Without assumptions on the sign of $\alpha$ and the size of the velocity field $\bw$ (which is part of the material derivative), there may be (exponentially) growing components in the solution and thus the exponential decrease of the stability constant as a function of $T$ can not be avoided.  In special cases, the behavior of the stability constant may be better, e.g. bounded away from zero uniformly in $T$. We comment on this further after Theorem~\ref{mainthm1}.
\end{remark}

\begin{lemma} \label{la:leminj}
If $\la\dot u,v\ra + a(u,v)=0$  for some $v\in H$ and all $u \in \Wo$, then $v=0$.
\end{lemma}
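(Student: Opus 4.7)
My plan is to proceed in three steps: first, upgrade the regularity of $v$ from $H$ to $W$; second, show $v$ vanishes at $t=T$; third, derive $v\equiv 0$ via a weighted energy identity, in the spirit of the proof of Lemma~\ref{la:infsup} but applied time-reversed.

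For Step~1, I would restrict the hypothesis to test functions $u=\phi\in C_0^1(\Gs)\subset\Wo$. Writing $\la\dot\phi,v\ra=(v,\dot\phi)_0$ for smooth $\phi$ and inserting into the definition~\eqref{weakmaterial} of the weak material derivative of $v$ yields
\[
\la\dot v,\phi\ra = a(\phi,v) - (v\phi,\DivG\bw)_0 \qquad\text{for all $\phi\in C_0^1(\Gs)$.}
\]
The right-hand side is continuous in $\phi$ with respect to the $H$-norm, so by density of $C_0^1(\Gs)$ in $H$ (Lemma~\ref{PropH}), $\dot v$ extends to an element of $H'$. Therefore $v\in W$ and the identity holds for all $\phi\in H$.

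For Step~2, with $v\in W$ I would apply Corollary~\ref{Partialint} to an arbitrary $u\in\Wo$ (using $u(0)=0$):
\[
\la\dot u,v\ra + \la\dot v,u\ra + (uv,\DivG\bw)_0 = (u(T),v(T))_{L^2(\Gamma(T))}.
\]
Substituting the hypothesis $\la\dot u,v\ra = -a(u,v)$ and the Step~1 identity $\la\dot v,u\ra = a(u,v)-(uv,\DivG\bw)_0$, the left-hand side collapses to zero, giving $(u(T),v(T))_{L^2(\Gamma(T))}=0$ for every $u\in\Wo$. To see that the terminal traces $\{u(T):u\in\Wo\}$ are dense in $L^2(\Gamma(T))$, for any $\widehat\eta\in C^\infty(\Gamma_0)$ set $\widehat u(y,t):=(t/T)\widehat\eta(y)\in\widehat W$; by Lemma~\ref{homoW} its pullback lies in $\Wo\cap C^1(\Gs)$ with $u(T)=\widehat\eta\circ\Phi^{-1}(\cdot,T)$, and such smooth traces are dense in $L^2(\Gamma(T))$. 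Hence $v(T)=0$.

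For Step~3, I would take $u:=e^{2\gamma t}v\in W$ (with $\gamma>0$ to be chosen) in the Step~1 identity, obtaining
\[
\la\dot v, e^{2\gamma t}v\ra = \int_0^T\!\int_{\Gamma(t)} e^{2\gamma t}\bigl[\nu_d|\nablaG v|^2 + (\alpha-\DivG\bw)v^2\bigr]\,ds\,dt.
\]
Applying Corollary~\ref{Partialint} to the pair $(v,e^{2\gamma t}v)$, together with the weak Leibniz rule $\dot{(e^{2\gamma t}v)} = e^{2\gamma t}\dot v + 2\gamma e^{2\gamma t}v$ and the terminal condition $v(T)=0$, gives
\[
2\la\dot v, e^{2\gamma t}v\ra = -\int_0^T\!\int_{\Gamma(t)} e^{2\gamma t}v^2(2\gamma + \DivG\bw)\,ds\,dt - \|v(0)\|_{L^2(\Gamma_0)}^2.
\]
Equating the two expressions and rearranging yields
\[
2\nu_d\!\int_0^T\!\int_{\Gamma(t)} e^{2\gamma t}|\nablaG v|^2\,ds\,dt + \|v(0)\|_{L^2(\Gamma_0)}^2 + 2\!\int_0^T\!\int_{\Gamma(t)} e^{2\gamma t}\bigl(\gamma+\alpha-\tfrac12\DivG\bw\bigr)v^2\,ds\,dt = 0.
\]
Choosing $\gamma>\|\alpha-\tfrac12\DivG\bw\|_{L^\infty(\Gs)}$ makes each of the three terms non-negative, forcing $v\equiv 0$. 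The hard part will be the bookkeeping in Step~3: the $(uv,\DivG\bw)_0$ contribution from Corollary~\ref{Partialint} must combine with the analogous term from the PDE for $v$ to produce exactly the coercive factor $\gamma+\alpha-\tfrac12\DivG\bw$, and the choice of the exponential weight mirrors the role of $\gamma$ in the proof of Lemma~\ref{la:infsup}.
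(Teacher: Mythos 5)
Your proof is correct and follows essentially the same three-step route as the paper's: upgrading $v$ from $H$ to $W$ via the duality identity tested against $C_0^1(\Gs)$, showing $v(\cdot,T)=0$ through Corollary~\ref{Partialint}, and closing with an increasing exponential weight (your $e^{2\gamma t}$ with $\gamma>0$ is the paper's $e^{-\gamma t}$ with $\gamma<0$). Your explicit density argument for the terminal traces $\{u(\cdot,T):u\in\Wo\}$ in $L^2(\Gamma(T))$ supplies a detail the paper leaves implicit; otherwise the two arguments coincide.
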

\begin{proof}
Take $v \in H$ such that $\la\dot u,v\ra =- a(u,v) $ for all $u \in \Wo$. For all $ u \in C_0^1(\Gs) \subset \Wo$ we have by definition
\begin{align*}
   \la\dot v,u\ra & = - \int_0^T \int_{\Gamma(t)} v \dot u  \, ds \, dt- \int_0^T \int_{\Gamma(t)}u v \DivG \bw \, ds\, dt\\ &
    = -\la\dot u,v\ra - \int_0^T \int_{\Gamma(t)}u v \DivG \bw \, ds\, dt   =  a(u,v) -(u,v\DivG \bw)_0.
\end{align*}
Since the functional  $u \to a(u,v)-(u,v\DivG \bw)_0 $ is in $H'$,  we conclude $\dot v \in H'$, and thus $v \in W$ holds. From  a density argument it follows that
\begin{equation} \label{aux2}
\la\dot v,u\ra=  a(u,v) -(u,v\DivG \bw)_0  \quad \text{for all}~~u \in H
\end{equation}
holds.
 For all $u \in \Wo \subset H$ we get
 \[ \la \dot v,u\ra = a(u,v) -(u,v\DivG \bw)_0 =-\la\dot u,v\ra -(u,v\DivG \bw)_0.\]
This and \eqref{partint} yield
\[
  0=  \la\dot v,u\ra +\la\dot u,v\ra+(u,v\DivG \bw)_0= \int_{\Gamma(T)} u(s,T) v(s,T)\, ds \quad \text{for all}~~u \in \Wo.
\]
This implies that $v(\cdot,T)=0$ on $\Gamma(T)$. We proceed as in the first step of the proof of Lemma \ref{la:infsup}. We take in \eqref{aux2} $u=v_{\ga}=e^{-\gamma t} v$, with $\gamma:=-2(1+\|\alpha-\frac12\DivG \bw\|_{L^\infty(\Gs)}) \le 0$, and use \eqref{hp}  and \eqref{partint}. We obtain
\begin{align*}
   0 & =\la\dot v,v_\ga\ra- a(v_\ga,v) +(v_\ga,v \DivG \bw)_0  \\
  & = \frac12( \la \dot v, v_\ga \ra + \la \dot v_\ga, v\ra ) + \frac12 \ga (v, v_\ga)_0 - a(v_\ga,v) +(v_\ga,v \DivG \bw)_0  \\
 & \leq \frac12 \ga (v, v_\ga)_0  + \frac12(v_\ga,v \DivG \bw)_0  -  a(v_\ga,v) \\
 & \leq   - (v, v_\ga)_0  - \nu_d(\nabla_\Gamma v_\ga,\nabla_\Gamma v)  \leq - (\|v\|_0^2 + \nu_d\|\nabla_\Gamma v\|_0^2).
\end{align*}
We conclude $v=0$.
\end{proof}
\medskip

As a direct consequence of the preceding two lemmas we obtain the following well-posedness result.

\begin{theorem} \label{mainthm1}
For any $f\in H'$, the problem \eqref{weakformu} has a unique solution $u\in \Wo$. This solution satisfies the a-priori estimate
\[
\|u\|_W \le c_s^{-1} \|f\|_{H'}.
\]
\end{theorem}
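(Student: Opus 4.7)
The plan is to invoke the standard Banach--Ne\v{c}as--Babu\v{s}ka (BNB) theorem (see e.g.\ \cite{Ern04}, Theorem 2.6) applied to the bilinear form
\[
  b(u,v) := \la \dot u, v\ra + a(u,v), \qquad u \in \Wo,\; v \in H,
\]
with trial space $\Wo$ and test space $H$. All three ingredients needed by BNB are already in hand: continuity of $b$ on $\Wo \times H$ was verified immediately before Lemma~\ref{la:infsup}; the inf-sup bound with constant $c_s$ is Lemma~\ref{la:infsup}; and the injectivity of the adjoint (i.e.\ if $b(u,v)=0$ for all $u \in \Wo$ then $v=0$) is exactly Lemma~\ref{la:leminj}. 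The right-hand side $v \mapsto \la f, v\ra$ is a bounded linear functional on $H$ by assumption $f \in H'$.

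Before invoking BNB I would briefly observe that $\Wo$ is a Hilbert space: by Theorem~\ref{PropW}(ii) the trace operator $v \mapsto v(\cdot,0) \in L^2(\Gamma_0)$ is continuous from $W$ into $L^2(\Gamma_0)$, so $\Wo$ is a closed subspace of the Hilbert space $W$, endowed with the $W$-norm. The test space $H$ is Hilbert by Lemma~\ref{PropH}. Thus BNB applies in the right functional-analytic setting and yields a unique $u \in \Wo$ solving \eqref{weakformu}.

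The a-priori estimate is an immediate consequence of the inf-sup inequality once existence is known. For the unique solution $u \in \Wo$, Lemma~\ref{la:infsup} gives
\[
  c_s \|u\|_W \le \sup_{0 \ne v \in H} \frac{\la \dot u, v\ra + a(u,v)}{\|v\|_H} = \sup_{0 \ne v \in H} \frac{\la f, v\ra}{\|v\|_H} = \|f\|_{H'},
\]
which is the claimed bound $\|u\|_W \le c_s^{-1}\|f\|_{H'}$.

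There is essentially no obstacle: all the real work has been done in Lemmas~\ref{la:infsup} and~\ref{la:leminj}, and the proof of Theorem~\ref{mainthm1} amounts to assembling these two facts in the BNB framework and reading off the norm bound from the inf-sup constant.
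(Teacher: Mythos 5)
Your proposal is correct and follows exactly the route the paper takes: the authors state Theorem~\ref{mainthm1} as "a direct consequence of the preceding two lemmas," i.e.\ they combine the continuity bound, the inf-sup condition of Lemma~\ref{la:infsup}, and the adjoint injectivity of Lemma~\ref{la:leminj} in the Banach--Ne\v{c}as--Babu\v{s}ka framework of \cite{Ern04}, with the a-priori bound read off from the inf-sup constant just as you do. Your added remark that $\Wo$ is a closed (hence Hilbert) subspace of $W$ by the trace continuity of Theorem~\ref{PropW} is a correct and welcome detail that the paper leaves implicit.
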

We consider two special cases in which the inf-sup stability constant $c_s$ can be shown to be bounded away from zero uniformly in $T$, cf. Remark~\ref{remstab}.
\begin{proposition} \label{thmST}
Assume that there is $c_0 >0$ such that
\begin{equation}\label{cond1}
  \alpha - \frac12 \DivG \bw \geq c_0 \quad \text{on}~~\Gs
\end{equation}
holds.
 Then the inf-sup property \eqref{infsup} holds with $c_s= \frac{\min \{\nu_d, c_0\}}{\sqrt{2} ({1+\nu_d + \alpha_\infty})^2}$.
\end{proposition}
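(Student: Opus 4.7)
The plan is to mimic the proof of Lemma~\ref{la:infsup} but exploit the coercivity assumption \eqref{cond1} to avoid the exponential weight $e^{-\gamma t}$ that caused the $T$-dependence in the generic stability constant. Under \eqref{cond1} the natural choice of test function for controlling $\|u\|_H$ will simply be $v=u$ itself.

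First I would take $u \in \Wo$ and test \eqref{weakformu} with $v=u$. By Corollary~\ref{Partialint} applied with $u=v$, combined with the initial condition $u(\cdot,0)=0$, I get
\begin{equation*}
   2\la\dot u,u\ra + (u,u\DivG\bw)_0 = \int_{\Gamma(T)} u(\cdot,T)^2\,ds \;\ge\; 0,
\end{equation*}
so $\la\dot u,u\ra \ge -\tfrac12(u,u\DivG\bw)_0$. Adding $a(u,u)=\nu_d\|\nablaG u\|_0^2 + (\alpha u,u)_0$ and invoking \eqref{cond1} yields
\begin{equation*}
 \la\dot u,u\ra + a(u,u) \;\ge\; \nu_d\|\nablaG u\|_0^2 + \bigl((\alpha-\tfrac12\DivG\bw)u,u\bigr)_0 \;\ge\; \kappa\,\|u\|_H^2,
\end{equation*}
with $\kappa:=\min\{\nu_d,c_0\}$. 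This replaces the $\nu_d e^{-\gamma T}$ factor in inequality \eqref{eq:proofinfsup1} by the $T$-independent constant $\kappa$.

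Next, I would recover control of $\|\dot u\|_{H'}$ exactly as in the proof of Lemma~\ref{la:infsup}: by the Riesz representation theorem there is a unique $z\in H$ with $\la\dot u,v\ra=(z,v)_H$ for all $v\in H$ and $\|z\|_H=\|\dot u\|_{H'}$; the continuity bound \eqref{eq:continuity} together with Young's inequality then gives
\begin{equation*}
 \la\dot u,z\ra + a(u,z) \;\ge\; \tfrac12\|\dot u\|_{H'}^2 - \tfrac{c_1}{2}\|u\|_H^2, \quad c_1=(\nu_d+\alpha_\infty)^2,
\end{equation*}
identical to \eqref{eq:proofinfsup2}.

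Finally, I would combine the two test functions via the ansatz $v=z+\mu u \in H$ with $\mu:=(c_1+1)/(2\kappa)\ge 1$. Adding the two estimates produces
\begin{equation*}
 \la\dot u,v\ra + a(u,v) \;\ge\; \tfrac12\|\dot u\|_{H'}^2 + \bigl(\mu\kappa-\tfrac{c_1}{2}\bigr)\|u\|_H^2 \;\ge\; \tfrac12\|u\|_W^2,
\end{equation*}
while $\|v\|_H\le\|z\|_H+\mu\|u\|_H\le\mu\sqrt{2}\,\|u\|_W$. Dividing gives
\begin{equation*}
  \frac{\la\dot u,v\ra+a(u,v)}{\|u\|_W\|v\|_H} \;\ge\; \frac{1}{2\sqrt{2}\,\mu}\|u\|_W/\|u\|_W = \frac{\kappa}{\sqrt 2\,(c_1+1)}.
\end{equation*}
Since $c_1+1 = (\nu_d+\alpha_\infty)^2+1 \le (1+\nu_d+\alpha_\infty)^2$, this lower bound dominates the claimed $c_s=\kappa/\bigl(\sqrt 2(1+\nu_d+\alpha_\infty)^2\bigr)$, completing the argument. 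The whole proof is conceptually routine once one has the structure of Lemma~\ref{la:infsup} in hand; the only step requiring some care is verifying that \eqref{cond1} indeed produces a $T$-independent coercivity estimate, which is the reason the $e^{-\gamma t}$ trick becomes unnecessary.
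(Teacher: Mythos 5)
Your proposal is correct and follows essentially the same route as the paper: test with $v=u$ (rather than $u_\gamma$), use the integration-by-parts identity and \eqref{cond1} to get the $T$-independent coercivity bound $\la\dot u,u\ra+a(u,u)\ge\min\{\nu_d,c_0\}\|u\|_H^2$, then combine with the Riesz representer $z$ via $v=z+\mu u$ with $\mu=(c_1+1)/(2\hat c)$ exactly as in Lemma~\ref{la:infsup}. The constants also match, including the final observation that $c_1+1\le(1+\nu_d+\alpha_\infty)^2$.
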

\begin{proof}
We follow the arguments as in the proof of Lemma~\ref{la:infsup}. Instead of $v=u_\gamma$ we take $v=u$ as a test function. This yields
\begin{equation} \label{tth}
\la\dot{u},u\ra +  a(u,u) \ge  \hat{c}\|u\|_H^2, \quad \hat c:=\min\{\nu_d, c_0 \}.
\end{equation}
We set $v=z+\mu u$ ($z$ as in the proof of Lemma~\ref{la:infsup}). Taking $\mu:=\frac{c_1+1}{2 \hat c} \geq 1$ we get
\[
   \la\dot u,v\ra +  a(u,v) \ge \frac12\|u\|_W^2 \ge \frac{\sqrt{2}}{4} \mu^{-1}\|u\|_W\|v\|_H = \frac{\min \{\nu_d, c_0\}}{\sqrt{2} ({ 1+\nu_d + \alpha_\infty})^2}\|u\|_W\|v\|_H .
\]
This completes the proof.
\end{proof}
\medskip

As a second special case, we consider the diffusion equation  on a moving surface \eqref{transport1}.
A smooth solution to this problem satisfies the zero average condition \eqref{average}. Functions $u$ from $H$ satisfying
\eqref{average} obey the Friedrichs inequality
\begin{equation}\label{Fr}
\int_{\Gamma(t)} |\nabla_\Gamma u|^2 \, ds \geq c_F(t) \int_{\Gamma(t)}  u^2 \, ds\quad\text{for all}~t\in[0,T],
\end{equation}
with $c_F(t) >0$.
The Friedrichs inequality helps us to get additional control on the $L^2$ -norm of $u$ in proving the inf-sup inequality and so to improve the stability constant $c_s$.
Below we shall make more precise when a solution to  \eqref{weakformu} satisfies a zero average condition, and
how the stability estimate is improved.

We introduce the following
subspace of $\Wo$:
\[
\widetilde{W}_0:= \{\, u \in \Wo~|~ \int_{\Gamma(t)} u(\cdot, t) \, ds =0 \quad \text{for all}~~t \in [0,T]\,\}.
\]
\begin{proposition}\label{Prop2}
Assume $ \alpha ={\Div}_\Gamma\bw$, $f\in H'$ is such that $\la f,\phi\ra=0 $ for all  $\phi \in C_0^1(0,T)$.
 Then the solution $u \in \Wo$ of \eqref{weakformu} belongs to $\widetilde{W}_0$.
Additionally assume that there exists a $c_0 >0$ such that
\begin{equation} \label{ass7}
   \DivG \bw (x,t) +\nu_d c_F(t) \geq c_0 \quad \text{for all}~~x \in \Gamma(t),~t \in [0,T]
\end{equation}
holds.  Then the inf-sup property \eqref{infsup} holds, with $c_s= \frac{\min \{\nu_d, c_0\}}{2\sqrt{2} (1+\nu_d + \alpha_\infty)^2} $ and $\Wo$ replaced by the subspace $\widetilde{W}_0$.
\end{proposition}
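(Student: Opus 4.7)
The plan is to test \eqref{weakformu} against spatially constant functions. For arbitrary $\phi \in C_0^1(0,T)$ put $v(x,t) := \phi(t)$. Because each $\Gamma(t)$ is compact and $\phi$ has compact support in $(0,T)$, we have $v \in C_0^1(\Gs) \subset H$. Substituting into \eqref{weakformu} and using the defining formula \eqref{weakmaterial} for $\la\dot u,v\ra$ with $\dot v = \phi'(t)$ and $\nablaG v = 0$, together with $\la f,\phi\ra = 0$, the two zero-order surface integrals cancel thanks to $\alpha = \DivG \bw$, leaving
\[
\int_0^T M(t)\phi'(t)\,dt = 0 \quad \text{for all } \phi \in C_0^1(0,T), \qquad M(t) := \int_{\Gamma(t)} u(\cdot,t)\,ds.
\]
Hence $M$ is distributionally constant. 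It remains to know that $M$ is continuous so that its value is fixed by $M(0)$. Pulling back via \eqref{homo}--\eqref{fromtrans1} gives $M(t) = \int_{\Gamma_0} \widehat u(\cdot,t)\,\gamma(\cdot,t)\,d\widehat s$. Since $\widehat u \in \widehat W$ by Lemma~\ref{homoW}, Lemma~\ref{lem4} yields $t \mapsto \widehat u(\cdot,t)$ continuous into $L^2(\Gamma_0)$, and $\gamma$ is smooth; so $M \in C[0,T]$. With $u(\cdot,0) = 0$, i.e., $M(0) = 0$, one concludes $M \equiv 0$, whence $u \in \widetilde{W}_0$.

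\textbf{Part 2 (improved inf-sup).} The plan follows Lemma~\ref{la:infsup} and Proposition~\ref{thmST}, but uses $v = u$ (no exponential weight) in the coercivity step. Corollary~\ref{Partialint} applied with $u(\cdot,0) = 0$, combined with $\alpha = \DivG \bw$, turns $\la\dot u,u\ra + a(u,u)$ into
\[
\tfrac12 \|u(\cdot,T)\|_{L^2(\Gamma(T))}^2 + \nu_d \|\nablaG u\|_0^2 + \tfrac12 \int_0^T\!\int_{\Gamma(t)} u^2 \DivG \bw \, ds\,dt.
\]
Dropping the nonnegative boundary term, splitting $\nu_d \|\nablaG u\|_0^2$ in two, and applying the Friedrichs inequality \eqref{Fr} slicewise in time to one half---legitimate because $u(\cdot,t)$ has zero mean by Part~1---the combined mass term takes the form $\tfrac12 \int_0^T \int_{\Gamma(t)} u^2(\nu_d c_F(t) + \DivG \bw)\,ds\,dt$ and is bounded below via \eqref{ass7} by $\tfrac12 c_0 \|u\|_0^2$. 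This yields the coercivity estimate
\[
\la\dot u,u\ra + a(u,u) \geq \tfrac12 \min\{\nu_d,c_0\}\|u\|_H^2.
\]
The duality-based completion of the inf-sup argument is then identical to Lemma~\ref{la:infsup}: let $z \in H$ be the Riesz representative of $\dot u$, take $v := z + \mu u$ with $c_1 := (\nu_d + \alpha_\infty)^2$ and $\mu := (c_1+1)/\min\{\nu_d,c_0\}$, and combine the coercivity bound with $\|v\|_H \leq \mu \sqrt{2}\|u\|_W$. The extra factor $\tfrac12$ in the coercivity lower bound relative to Proposition~\ref{thmST} is precisely what produces the $2\sqrt 2$ in the denominator of the stated $c_s$.

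\textbf{Main obstacle.} The coercivity and duality manipulations of Part~2 are essentially mechanical. The real care sits in Part~1: one must identify the distributionally constant $M$ with the value $M(0) = 0$, which hinges on time-continuity of the trace down to $t=0$. This is most cleanly obtained through the $W$--$\widehat W$ homeomorphism (Lemma~\ref{homoW}) and the continuity result of Lemma~\ref{lem4}, and seems awkward to handle without passing to the reference cylinder $\Gamma_0 \times [0,T]$, since otherwise the integration domain $\Gamma(t)$ itself varies with $t$.
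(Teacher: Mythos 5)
Your proof is correct and follows essentially the same route as the paper: Part~1 tests \eqref{weakformu} with time-only functions $\phi(t)$ (the zero-order terms cancelling since $\alpha=\DivG\bw$) to conclude the mass $M(t)$ is constant and hence zero, and Part~2 establishes coercivity with $v=u$ via Corollary~\ref{Partialint}, the Friedrichs inequality and \eqref{ass7}, completed by the identical Riesz-representative duality step with $v=z+\mu u$. The only (harmless) divergence is how continuity of $M$ down to $t=0$ is secured: the paper shows $M\in H^1(0,T)$ directly from the bound $\left|\int_0^T M\phi'\,dt\right|\le c\|u\|_W\|\phi\|_{L^2(0,T)}$, whereas you pull back to $\Gamma_0\times[0,T]$ and invoke Lemma~\ref{lem4}; both are valid.
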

\begin{proof} Let $u \in \Wo$ be the solution of \eqref{weakformu}. Define $U(t):= \int_{\Gamma(t)} u(\cdot,t) \, ds$. Using Theorem~\ref{PropW} (ii) we get
\[
  \int_0^T U(t)^2 \, dt \leq \int_0^T \|u_{|\Gamma(t)}\|_{L^2(\Gamma(t))}^2 |\Gamma(t)| \, dt \leq c \|u\|_W^2.
\]
Hence, $U \in L^2(0,T)$ holds. Take $\phi \in C_0^1(0,T)$, and thus $\phi \in C_0^1(\Gs)$. Note that
\begin{align*}
 - \int_0^T U(t) \phi'(t) \, dt &  =-\int_0^T  \int_{\Gamma(t)} u \phi'(t) \, ds  dt =-\int_0^T  \int_{\Gamma(t)} u \dot \phi \, ds  dt \\ &  =
\la \dot u, \phi \ra + (u, \phi \DivG \bw)_0 \leq c \|u\|_W \|\phi\|_H \leq c \|u\|_W \|\phi\|_{L^2(0,T)}.
\end{align*}
This implies that $U \in H^1(0,T)$ holds. Using $\la f,\phi\ra=0 $  for arbitrary $\phi \in C_0^1(0,T)$, we obtain:
\begin{equation*}
 \int_0^T U' \phi \, dt  = - \int_0^T U \phi' \, dt = \la \dot u, \phi \ra + (u, \phi \DivG \bw)_0  =\la \dot u, \phi \ra +a(u,\phi)= \la f,\phi\ra=0.
\end{equation*}
Thus, $U$ is a constant function. From $U(0)= \int_{\Gamma_0} u(\cdot,0) \, ds=0$ it follows that $U(t)=0$ for all $t \in [0,T]$ and thus $u  \in \widetilde{W}_0$ holds.

For the proof of the inf-sup property,  we follow the arguments as in the proof of Lemma~\ref{la:infsup}. Take $u \in \widetilde{W}_0$. Instead of $v=u_\gamma$ we take $v=u$ as a test function. Using the Friedrichs inequality \eqref{Fr} and the assumption in \eqref{ass7}, we get
\begin{align*}
\la\dot{u},u\ra +  a(u,u) &  \ge  \frac12 (\DivG \bw, u^2)_0 + \nu_d \|\nablaG u\|_0^2 \geq \frac12 ( \DivG \bw + \nu_d c_F, u^2)_0 + \frac12 \nu_d \|\nablaG u\|_0^2 \\ &  \geq \frac12 c_0 \|u\|_0^2 +\frac12 \nu_d \|\nablaG u\|_0^2 \geq  \hat{c}\|u\|_H^2, \quad \hat c:=\frac12 \min\{\nu_d, c_0 \}.
\end{align*}
 Take $v:=z+\mu u$ ($z$ as in the proof of Lemma~\ref{la:infsup}). Taking $\mu:=\frac{c_1+1}{2 \hat c} \geq 1$ we get
\[
   \la\dot u,v\ra +  a(u,v) \ge \frac12\|u\|_W^2 \ge \frac{\sqrt{2}}{4} \mu^{-1}\|u\|_W\|v\|_H = \frac{\min \{\nu_d, c_0\}}{2\sqrt{2} (1+(\nu_d + \alpha_\infty)^2)}\|u\|_W\|v\|_H .
\]
This completes the proof.
\end{proof}

\section{Time-discontinuous weak formulation} \label{ssectDG}\label{sec5}
In this section, we study a time-discontinuous  variant of the weak formulation in \eqref{weakformu}. This new variational formulation  is even weaker than \eqref{weakformu}. However, it can be seen as a time-stepping discretization  method for \eqref{weakformu} and is better suited for the discontinuous Galerkin discretization framework.

 Consider a  partitioning of the time interval:  $0=t_0 <t_1 < \ldots < t_N=T$, with a uniform time step $\Delta t = T/N$. The assumption of a uniform time step is made to simplify the presentation, but is not essential for the results derived below. A time interval is denoted by $I_n:=(t_{n-1},t_n]$.  The symbol $\Gs^n$ denotes the space-time interface corresponding to $I_n$, i.e.,  $\Gs^n:=\cup_{t \in I_n}\Gamma(t) \times \{t\}$, and $\Gs:= \cup_{1 \leq n \leq N}  \Gs^n $. We introduce the following subspaces of $H$:
\[H_n:=\{\, v \in H~|~v=0  \quad \text{on}~~\Gs \setminus \Gs^n\, \}.
\]
For $u \in H$ we use the notation $u_n:=u_{|\Gs^n} \in H_n$. Corresponding to the space  $H_n$, we define a material weak derivative as in Section~\ref{sectHW}. For $u \in H_n$:
\[
\la\dot u,\phi\ra_{I_n}:= - \int_{t_{n-1}}^{t_n} \int_{\Gamma(t)} u \dot \phi + u \phi\DivG\bw\, ds \, dt  \quad \text{for all}~~ \phi \in C_0^1(\Gs^n).
\]
If for $u\in H_n$ the norm
\[
\|\dot{u}\|_{H_n'}=\sup_{\phi\in C_0^1(\Gs^n)}\frac{\la\dot u,\phi\ra_{I_n}}{\|\phi\|_{H}}
\]
is bounded, then by a density argument, cf. Lemma~\ref{PropH},  $\dot{u}$ can be extended to a bounded linear functional on $H_n$.  We define the spaces
\[
  W_n= \{ \, v\in H_n~|~\dot v \in H_n' \,\}, \quad \|v\|_{W_n}^2 = \|v\|_{H}^2 +\|\dot v\|_{H_n'}^2.
\]
Finally, we define the broken space
\[
W^b:= \oplus_{n=1}^N W_n,~~\text{with norm}~~ \|v\|_{W^b}^2= \sum_{n=1}^N \|v\|_{W_n}^2= \|v\|_H^2+ \sum_{n=1}^N \|\dot v_n\|_{H_n'}^2 .
\]
Note the embeddings
$
  W \subset W^b \subset H
$, and furthermore:
\begin{equation} \label{idhulp}
  \la \dot u, v\ra = \la \dot u_n, v\ra_{I_n} \quad \text{for}~~u \in W, v \in H_n.
\end{equation}
For $u \in W_n$, we define the one-sided  limits
$u_+^{n}=u_+(\cdot,t_{n})$ and ${u}_{-}^n=u_{-}(\cdot,t_n)$
 based on \eqref{conti2}. The limits are well-defined in $L^2(\Gamma(t_n))$ thanks to Theorem~\ref{PropW} (item (iii)). At $t_0$ and $t_N$  only $u_+^{0}$ and $u_{-}^{N}$ are defined.
For $v \in W^b$, a jump operator  is defined by $[v]^n= v_+^n-v_{-}^n \in L^2(\Gamma(t_n))$, $n=1,\dots,N-1$. For $n=0$, we define $[v]^0=v_+^0$.

On the cross sections $\Gamma(t_n)$, $0 \leq n \leq N$,  of $\Gs$ the $L^2$ scalar product is denoted by
\[
 (\psi,\phi)_{t_n}:= \int_{\Gamma(t_n)} \psi \phi \, ds
\]
and for the corresponding norm we use the notation $\|\cdot\|_{t_n}$.
In addition to $a(\cdot,\cdot)$,  we define on the broken space $W^b$ the following bilinear forms:
\begin{align*}
 d(u,v)  =  \sum_{n=1}^N d^n(u,v), \quad d^n(u,v)=([u]^{n-1},v_+^{n-1})_{t_{n-1}},\quad
 \la \dot u ,v\ra_b =\sum_{n=1}^N  \la \dot u_n, v_n\ra_{I_n}
\end{align*}

Instead of \eqref{weakformu}, we now consider the following variational problem in the broken space: Given $f\in H'\subset(W^b)'$, find $u \in W^b$ such that
\begin{equation} \label{brokenweakformu}
  \la \dot u ,v\ra_b +a(u,v)+d(u,v) =\la f,v\ra \quad \text{for all}~~v \in W^b.
\end{equation}
This formulation is similar to a repeated application of the formulation \eqref{weakformu} on a sequence of time subintervals. The bilinear form $d(\cdot,\cdot)$ transfers information between neigboring time intervals in the usual DG weak sense.
We use $W^b$ for the test space, since the term $d(u,v)$ is not well-defined for an arbitrary $v\in H$.
 From an algorithmic point of view  this formulation has the advantage that due to the use of the broken space $W^b= \oplus_{n=1}^N W_n$ it can be solved in a time stepping manner. The final discrete method (Section~\ref{sec6}) is obtained by combining the variational formulation \eqref{brokenweakformu} and a Galerkin approach in which the space $W^b$  is replaced  by a finite element subspace. Before we turn to such a discretization method we first study \emph{consistency} and \emph{stability} of the weak formulation \eqref{brokenweakformu}.
\begin{lemma}\label{lem_aux1}
 Let $u \in \Wo$ be the solution of \eqref{weakformu}. Then $u$ solves the variational problem \eqref{brokenweakformu}.
\end{lemma}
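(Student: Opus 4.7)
The plan is to verify three things in sequence: (a) that the solution $u\in\Wo$ is an admissible element of $W^b$; (b) that all the jump terms $d(u,v)$ vanish; and (c) that the broken material derivative duality $\langle\dot u,v\rangle_b$ agrees with the global $\langle\dot u,v\rangle$ when tested against elements of $W^b$. Once these three reductions are in place, \eqref{brokenweakformu} follows immediately by applying \eqref{weakformu} with $v\in W^b\subset H$ as the test function.

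First I would note that $\Wo\subset W\subset W^b$ via the canonical decomposition $u=\sum_{n=1}^N u_{|\Gs^n}$, with each restriction lying in $W_n$ since $\dot u_{|\Gs^n}$ clearly defines a bounded functional on $H_n$. For step (b), the key observation is Theorem~\ref{PropW}(iii): for $u\in W$, the mapping $\delta\mapsto u_\delta(\cdot,t_n)$ is continuous from a neighbourhood of $0$ into $L^2(\Gamma(t_n))$, so the one-sided limits $u_+^n$ and $u_-^n$ coincide in $L^2(\Gamma(t_n))$ for every interior node $1\le n\le N-1$. Hence $[u]^n=0$ for those $n$, while $[u]^0=u_+^0=u(\cdot,0)=0$ because $u\in\Wo$. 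Consequently $d^n(u,v)=0$ for every $v\in W^b$ and every $n$, so $d(u,v)=0$.

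For step (c), given $v\in W^b$, I would decompose $v=\sum_{n=1}^N v_n$ with $v_n:=v_{|\Gs^n}\in W_n\subset H_n\subset H$ (extended by zero outside $\Gs^n$, which is legal by the very definition of $H_n$). Applying the identity \eqref{idhulp} to each piece yields
\[
\langle\dot u,v\rangle=\sum_{n=1}^N\langle\dot u,v_n\rangle=\sum_{n=1}^N\langle\dot u_n,v_n\rangle_{I_n}=\langle\dot u,v\rangle_b.
\]
Similarly $a(u,v)=\sum_n a(u,v_n)$ holds trivially from the integral definition of $a$. Since $v\in H$, we may substitute it into \eqref{weakformu} to get $\langle\dot u,v\rangle+a(u,v)=\langle f,v\rangle$, and combining with the above identities and $d(u,v)=0$ gives \eqref{brokenweakformu}.

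The only nontrivial step is the vanishing of the interior jumps in (b); everything else is algebraic bookkeeping on the decomposition. That step in turn rests squarely on the continuity of one-sided traces furnished by Theorem~\ref{PropW}(iii), which is precisely the regularity we have built into $W$. No density argument is required beyond what is already encoded in the definitions of the broken spaces.
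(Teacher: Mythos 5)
Your proposal is correct and follows essentially the same route as the paper: the jumps $[u]^n$ vanish by the one-sided trace continuity of Theorem~\ref{PropW}(iii) (together with $u(\cdot,0)=0$ for the $n=0$ term), and the identity \eqref{idhulp} converts the global duality pairing into the broken one so that \eqref{weakformu} can be applied directly. The only difference is presentational: the paper tests slab by slab with $v\in W_n$, whereas you decompose a general $v\in W^b$ into its slab components, which amounts to the same bookkeeping.
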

\begin{proof}  Let $u \in \Wo$ be the solution of \eqref{weakformu}. Take a test function $v \in W_n$.
Since for  $u\in \Wo$ and for  any $t\in[0,T]$ the mapping \eqref{conti} is continuous as a mapping from a sufficiently small interval $[-\ep,0]$ or $[0,\ep]$  to $L^2(\Gamma(t))$,
it follows that $[u]^n=0$ for all $n=0,\dots,N-1$.  Hence we get $d(u,v)=0$. From \eqref{weakformu} and \eqref{idhulp} we get
\[
  \la \dot u_n,v\ra_{I_n} +a(u_n,v)= \la f,v\ra \quad \text{for all}~~v \in W^n.
\]
and thus $ \la \dot u ,v\ra_b +a(u,v)+d(u,v) =\la f,v\ra$ holds.
\end{proof}
\smallskip

For the stability analysis, we introduce the following norm on $W^b$:
\[
 \enorm{u}^2 := \|u\|_{W^b}^2 + \frac12  \|u_{-}^N\|_{T}^2 + \frac12 \sum_{n=1}^N  \|[u]^{n-1}\|_{t_{n-1}}^2, \quad u
\in W^b.
\]
Since the trace norms $\|u_{+}^{n-1}\|_{t^{n-1}}$ and $\|u_{-}^{n}\|_{t^{n}}$ are bounded by $\|u\|_{W_n}$,  the  norm
$\enorm{u}$ is equivalent to the $\|\cdot\|_{W^b}$ norm, but slightly more convenient for the analysis.
 Stability of the time-discontinuous formulation is based on the next lemma.

\begin{lemma} \label{thmstab} Set $\gamma:=2(\nu_d+\|\alpha-\frac12 \DivG \bw\|_{L^\infty(\Gs)})$.  The following inf-sup inequality holds:
 \begin{equation*}
 \inf_{0\neq u \in W^b}~\sup_{ 0\neq v \in W^b} \frac{
  \la \dot u, v\ra_b +d(u,v) +a(u,v)}{\enorm{u}\,\|v\|_H} \geq c_{\rm st}\,{ e^{-\gamma T}},
 \end{equation*}
 where $c_{\rm st}$ is independent of $T$ and $N$.
\end{lemma}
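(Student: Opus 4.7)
The strategy mimics Lemma~\ref{la:infsup}: I would construct a test function $v\in W^b$ of the form $v=\tilde z+\mu\,u_\gamma$, where $u_\gamma=e^{-\gamma t}u$ controls the $H$-norm together with the trace and jump terms, and $\tilde z$ is a (slightly perturbed) Riesz representer of $\dot u$ controlling the dual-norm contribution $\sum_n\|\dot u_n\|_{H_n'}^2$. The parameter $\mu\ge 1$ will be chosen large enough to absorb cross terms. The main arithmetic is a careful piecewise integration by parts combined with the jump-term identity; the main technical obstacle is that Riesz on the broken space only delivers a function in $H$, not in $W^b$, so a density argument is needed to make the jump form $d(u,\cdot)$ harmless.

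For the first test function, I would plug $u_\gamma$ into the integration-by-parts formula \eqref{ByParts} applied on each slab $\Gs^n$ (this version of \eqref{partint} is available for $u\in W_n$). After summing over $n$ and using $\dot u_{\gamma,n}=e^{-\gamma t}(\dot u_n-\gamma u)$, one obtains
\[
 \la\dot u,u_\gamma\ra_b = \tfrac12\sum_{n=1}^N\bigl[e^{-\gamma t_n}\|u_-^n\|_{t_n}^2-e^{-\gamma t_{n-1}}\|u_+^{n-1}\|_{t_{n-1}}^2\bigr]+\tfrac12\gamma(u,u_\gamma)_0-\tfrac12(uu_\gamma,\DivG\bw)_0.
\]
The choice $\gamma=2(\nu_d+\|\alpha-\tfrac12\DivG\bw\|_{L^\infty(\Gs)})$ forces $\tfrac12\gamma+\alpha-\tfrac12\DivG\bw\ge\nu_d$, so combining with $a(u,u_\gamma)$ yields volume control $\ge\nu_d e^{-\gamma T}\|u\|_H^2$. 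Adding $d(u,u_\gamma)$ and expanding $([u]^{n-1},u_+^{n-1})_{t_{n-1}}=\tfrac12\|[u]^{n-1}\|^2+\tfrac12\|u_+^{n-1}\|^2-\tfrac12\|u_-^{n-1}\|^2$ (with the convention $u_-^0=0$) produces a telescoping sum in which the $\|u_\pm^n\|_{t_n}^2$ terms cancel cleanly, leaving
\[
 \la\dot u,u_\gamma\ra_b+a(u,u_\gamma)+d(u,u_\gamma)\ \ge\ e^{-\gamma T}\Bigl(\nu_d\|u\|_H^2+\tfrac12\|u_-^N\|_T^2+\tfrac12\sum_{n=1}^N\|[u]^{n-1}\|_{t_{n-1}}^2\Bigr).
\]

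For the second test function, Riesz applied on each $H_n$ gives $z_n\in H_n$ with $\la\dot u_n,\phi\ra_{I_n}=(z_n,\phi)_H$ for all $\phi\in H_n$ and $\|z_n\|_H=\|\dot u_n\|_{H_n'}$. Since $z_n$ lives only in $H_n$, $d(u,\cdot)$ is not obviously defined; I resolve this via the density of $C_0^1(\Gs^n)$ in $H_n$ (the slab analogue of Lemma~\ref{PropH}), approximating $z_n$ in $H$-norm by $\tilde z_n\in C_0^1(\Gs^n)\subset W_n$. The zero-extended $\tilde z\in W^b$ satisfies $\tilde z_+^{n-1}=\tilde z_-^n=0$, so $d(u,\tilde z)=0$, while $\la\dot u,\tilde z\ra_b\ge(1-\delta)\sum_n\|\dot u_n\|_{H_n'}^2$ and $\|\tilde z\|_H\le(1+\delta)\bigl(\sum_n\|\dot u_n\|_{H_n'}^2\bigr)^{1/2}$ for any preassigned $\delta>0$. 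A Young inequality on $a(u,\tilde z)$ then gives, after $\delta\to 0$,
\[
 \la\dot u,\tilde z\ra_b+a(u,\tilde z)+d(u,\tilde z)\ \ge\ \tfrac12\sum_n\|\dot u_n\|_{H_n'}^2-\tfrac{c_1}{2}\|u\|_H^2,\qquad c_1=(\nu_d+\alpha_\infty)^2.
\]

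Combining with $v=\tilde z+\mu u_\gamma$ and choosing $\mu=\tfrac{c_1+1}{2\nu_d}e^{\gamma T}$ yields
\[
 \la\dot u,v\ra_b+a(u,v)+d(u,v)\ \ge\ \tfrac12\enorm{u}^2,
\]
while $\|v\|_H\le\|\tilde z\|_H+\mu\|u_\gamma\|_H\le\mu\sqrt{2}\,\enorm{u}$, giving $c_{\rm st}\gtrsim\mu^{-1}=O(e^{-\gamma T})$ with constants independent of $T$ and $N$. The hard part is the second paragraph: one must verify that the telescoping of boundary contributions from the piecewise IBP matches exactly with the expansion of $d(u,u_\gamma)$, including the boundary convention $u_-^0=0$ used in $[u]^0=u_+^0$, and that the factor $e^{-\gamma t_{n-1}}$ appearing in each jump contribution can be uniformly minorized by $e^{-\gamma T}$ without destroying $T$-independence of the remaining constants.
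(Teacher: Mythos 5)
Your proposal is correct and follows essentially the same route as the paper's proof: the test function $v=\tilde z+\mu u_\gamma$ with $u_\gamma=e^{-\gamma t}u$, the slabwise integration by parts combined with the expansion of $d(u,u_\gamma)$ to telescope the trace terms, and the choice $\mu=\frac{c_1+1}{2\nu_d}e^{\gamma T}$ all match. The only cosmetic difference is in handling the dual-norm term: the paper picks near-maximizers $\phi_n\in C_0^1(\Gs^n)$ directly from the definition of $\|\dot u_n\|_{H_n'}$ as a supremum over $C_0^1(\Gs^n)$, whereas you go through the Riesz representer on $H_n$ and then approximate by density, arriving at the same compactly supported test function with $d(u,\tilde z)=0$.
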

\begin{proof}
We follow the arguments as in the first part of the proof of Lemma~\ref{la:infsup}. Let $u = \sum_{n=1}^N u_n \in W^b$ be given. As a test function we use  $v=u_\gamma = e^{-\gamma t}u \in W^b$, and let $u_{\gamma,n}=e^{-\gamma t} u_n \in W_n$.   From the definition of the weak material derivative we get
\[
 \la \dot u_{\gamma,n},u_n\ra_{I_n}= \la \dot u_n, u_{\gamma,n} \ra_{I_n} - \gamma(u_n, u_{\gamma,n})_0,
\]
and using \eqref{partint} and  the choice of $\gamma$, this yields
\begin{equation*}  \begin{split}
  \la \dot u, u_{\gamma} \ra_b & =  \frac12 \sum_{n=1}^N \Big( \la \dot u_n, u_{\gamma,n} \ra_{I_n} + \la \dot u_{\gamma,n}, u_n \ra_{I_n} + \gamma (u_n, u_{\gamma,n})_0 \Big)  \\ & = \frac12 \sum_{n=1}^N \Big( (  u_{-}^n, u_{\gamma}^n )_{t_n} -  (  u_+^{n-1},u_{\gamma,+}^{n-1})_{t_{n-1}}\Big) -  \frac12( u, u_\gamma\DivG \bw)_0   + \frac12 \gamma(u, u_{\gamma})_0.
\end{split}
\end{equation*}
Setting $\gamma:=2(\nu_d+\|\alpha-\frac12 \DivG \bw\|_{L^\infty(\Gs)})$, we obtain
\begin{equation} \label{egk} \begin{split}
  \la \dot u, u_{\gamma} \ra_b +(\alpha u,u_ \gamma)_0& \ge \frac12 \sum_{n=1}^N \Big( e^{-\gamma t_n} \|u_{-}^n\|_{t_n}^2 - e^{-\gamma t_{n-1}} \|u_+^{n-1}\|_{t_{n-1}}^2 \Big) +\nu_d(u, u_{\gamma})_0\\
  &\ge \frac12 \sum_{n=1}^N \Big( e^{-\gamma t_n} \|u_{-}^n\|_{t_n}^2 - e^{-\gamma t_{n-1}} \|u_+^{n-1}\|_{t_{n-1}}^2 \Big)
  + \nu_d e^{-\gamma T} \|u\|_0^2.
\end{split}
\end{equation}
We also have (with $u^0:=0$):
\begin{equation} \label{kk}
 \begin{split}
 d(u,u_\gamma) & = \sum_{n=1}^N ([u]^{n-1}, u_{\gamma,+}^{n-1})_{t_{n-1}} = \sum_{n=1}^N e^{-\gamma t_{n-1}} ([u]^{n-1}, u_+^{n-1})_{t_{n-1}} \\
 & = \frac12 \sum_{n=1}^N e^{-\gamma t_{n-1}} \big( \|[u]^{n-1}\|_{t_{n-1}}^2 + \|u_+^{n-1}\|_{t_{n-1}}^2 - \|u^{n-1}\|_{t_{n-1}}^2 \big)  \\
 & = - \frac12 \sum_{n=1}^N \Big( e^{-\gamma t_n} \|u_{-}^n\|_{t_n}^2 - e^{-\gamma t_{n-1}} \|u_+^{n-1}\|_{t_{n-1}}^2 \Big)  \\ &\quad  + \frac12 e^{-\gamma T} \|u_{-}^N\|_T^2 + \frac12 \sum_{n=1}^N e^{-\gamma t_{n-1}} \|[u]^{n-1}\|_{t_{n-1}}^2.
\end{split}
\end{equation}
Combining the results in \eqref{egk}, \eqref{kk} with $(\nabla_\Gamma u,\nabla_\Gamma u_\gamma) \geq e^{-\gamma T} \|\nabla_\Gamma u\|_0^2$ we obtain
\begin{align*}
   & \la \dot u, u_\gamma \ra_b +d(u,u_\gamma) +a(u,u_\gamma) \\  & \geq \nu_d e^{-\gamma T} \|u\|_H^2 + \frac12 e^{-\gamma T} \|u_{-}^N\|_T^2 + \frac 12 \sum_{n=1}^Ne^{-\gamma t_{n-1}} \|[u]^{n-1}\|_{t_{n-1}}^2
\\
 &  \ge \min\{\nu_d,1\} { e^{-\gamma T}} \big( \|u\|_H^2 + \frac12  \|u_{-}^N\|_T^2 + \frac 12 \sum_{n=1}^N  \|[u]^{n-1}\|_{t_{n-1}}^2\big) .
\end{align*}
Note that $\|u_\gamma\|_H\le\|u\|_H \le\enorm{u}$ holds.

In remains to control the  $\sum_{n=1}^N \|\dot{u}_n\|_{H_n'}^2$ term in  $\enorm{u}^2$. From the definition of the $H_n'$ norm and the density of $C_0^1(\Gs^n)$ in $H_n$ it follows that for any $\ep \in (0,1)$, there exists $\phi_n \in C_0^1(\Gs^n)$ such that
\[
 \la \dot u_n, \phi_n \ra_{I_n}\ge (1-\ep) \|\dot{u}_n\|_{H_n'}\|\phi_n\|_{H}.
\]
We can scale $\phi_n$ such that $\|\phi_n\|_{H}=\|\dot{u}_n\|_{H_n'}$. Setting $z=\sum_{n=1}^N \phi_n$ we find
\[
\la \dot u, z \ra_b \ge (1-\ep)\sum_{n=1}^N \|\dot{u}_n\|_{H_n'}^2,\quad \|z\|_H^2= \sum_{n=1}^N \|\dot{u}_n\|_{H_n'}^2\le \enorm{u}^2.
\]
Furthermore, $d(u,z)=0$ and $a(u,z) \leq c \|u\|_H\|z\|_H$ hold. We let $v=z+\mu u_\gamma$ for sufficiently large $\mu$ and complete the proof using same arguments as in the proof of Lemma~\ref{la:infsup}.\quad
 \end{proof}
\medskip

We conclude that the weak formulations in
\eqref{weakformu} and  \eqref{brokenweakformu} are equivalent in the sense that the unique solution of the former is the unique solution of the latter.
As a direct consequence  of lemmas~\ref{lem_aux1} and~\ref{thmstab} we obtain the following theorem.

\begin{theorem}\label{th_br}
For any $f\in H'$, the problem \eqref{brokenweakformu} has a unique solution $u\in W^b$ satisfying the a priori estimate
\[
\enorm{u} \le c^{-1}_{\rm st}\,e^{\gamma T}  \|f\|_{H'},
\]
with $\gamma$ defined in Lemma~\ref{thmstab}.
\end{theorem}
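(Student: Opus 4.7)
The plan is to invoke the two preceding lemmas directly: Lemma~\ref{lem_aux1} supplies existence, and the inf-sup inequality of Lemma~\ref{thmstab} supplies both uniqueness and the a priori bound. No new constructions are required; this is the standard Banach--Ne\v{c}as--Babu\v{s}ka argument applied to the bilinear form on $W^b \times W^b$ with the asymmetric norm pair $(\enorm{\cdot},\|\cdot\|_H)$.

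First I would establish existence. Given $f \in H'$, Theorem~\ref{mainthm1} produces a (unique) $u \in \Wo$ solving \eqref{weakformu}. Since $\Wo \subset W \subset W^b$, Lemma~\ref{lem_aux1} asserts that this same $u$, regarded as an element of $W^b$, satisfies \eqref{brokenweakformu}. Here one uses the continuous embedding $W^b \hookrightarrow H$ (with $\|v\|_H \le \enorm{v}$) to ensure that $\la f,\cdot\ra$ is a well-defined functional on $W^b$, i.e.\ $H' \hookrightarrow (W^b)'$ by restriction.

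Next I would prove uniqueness. If $u_1,u_2 \in W^b$ both solve \eqref{brokenweakformu}, then $u := u_1 - u_2 \in W^b$ satisfies
\[
  \la \dot u, v\ra_b + d(u,v) + a(u,v) = 0 \quad \text{for all}~v \in W^b.
\]
The inf-sup inequality of Lemma~\ref{thmstab} then gives $c_{\rm st}\, e^{-\gamma T}\,\enorm{u} \le 0$, so $\enorm{u}=0$ and $u_1 = u_2$.

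Finally, for the a priori estimate I would apply Lemma~\ref{thmstab} to the solution $u$ itself: there exists $v \in W^b$ with $\|v\|_H > 0$ such that
\[
   c_{\rm st}\,e^{-\gamma T}\,\enorm{u}\,\|v\|_H \;\le\; \la \dot u,v\ra_b + d(u,v) + a(u,v) \;=\; \la f,v\ra \;\le\; \|f\|_{H'}\,\|v\|_H,
\]
and dividing by $\|v\|_H$ yields $\enorm{u} \le c_{\rm st}^{-1}\,e^{\gamma T}\,\|f\|_{H'}$, as claimed. Because all of the work is absorbed into the preceding two lemmas, there is no real obstacle here; the only point worth flagging is that the inf-sup in Lemma~\ref{thmstab} uses $\|v\|_H$ (rather than the stronger $\enorm{v}$) in the denominator, which is precisely what allows the right-hand side of the estimate to be controlled by the $H'$-norm of $f$ instead of the weaker $(W^b)'$-norm.
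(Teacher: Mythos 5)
Your argument is correct and is exactly the route the paper intends: the paper states Theorem~\ref{th_br} as ``a direct consequence'' of Lemma~\ref{lem_aux1} (which, via Theorem~\ref{mainthm1}, supplies existence) and the inf-sup inequality of Lemma~\ref{thmstab} (which supplies uniqueness and the a priori bound), and you have simply written out the standard details. Your closing remark about the $\|v\|_H$ denominator being what lets the right-hand side be measured in $\|f\|_{H'}$ is accurate and consistent with the paper's setup.
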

\medskip

In the previous section, we noted that for the diffusion equation, which is a special special case of the surface equation \eqref{transport2}, the stability constant can be improved, cf.~Proposition~\ref{Prop2}. An analog holds  for  the discontinuous time-space problem \eqref{brokenweakformu}.

\begin{proposition}\label{Prop2b}
Assume $ \alpha ={\Div}_\Gamma\bw$, $f\in H'$ is such that $\la f,\phi\ra=0 $ for all  $\phi \in C_0^1(0,T)$,
 and there exists a $c_0 >0$ such that
\begin{equation*}
   \DivG \bw (x,t) +\nu_d c_F(t) \geq c_0 \quad \text{for all}~~x \in \Gamma(t),~t \in [0,T].
\end{equation*}
Then the unique solution $u\in W^b$ of the problem \eqref{brokenweakformu} satisfies the a priori estimate
\begin{equation} \label{est_br}
\enorm{u} \le 
c\,\|f\|_{H'},
\end{equation}
with some $c$ independent of $T$ and $N$.
\end{proposition}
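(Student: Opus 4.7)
The strategy is to reduce the statement to Proposition~\ref{Prop2}, whose hypotheses coincide exactly with those assumed here. First, I would argue that the unique solution $u \in W^b$ of \eqref{brokenweakformu} actually lies in the smaller space $W \subset W^b$. Indeed, Theorem~\ref{mainthm1} produces a solution $u^\star \in \Wo$ of the continuous problem \eqref{weakformu}; by Lemma~\ref{lem_aux1}, this $u^\star$ is also a solution of \eqref{brokenweakformu}; and by uniqueness (Theorem~\ref{th_br}), we must have $u = u^\star \in W$.

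Next, I would invoke Proposition~\ref{Prop2} directly: under the present hypotheses, $u^\star \in \widetilde W_0$ and the improved inf-sup inequality on $\widetilde W_0$ gives, after testing the variational identity with the maximizer,
\[
  \|u\|_W = \|u^\star\|_W \leq c_s^{-1}\|f\|_{H'},
\]
with $c_s^{-1}$ independent of $T$. This is the key point at which the present assumptions buy us a constant that does not deteriorate with $T$.

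It remains to bound the extra contributions in $\enorm{\cdot}$. Because $u \in W$, Theorem~\ref{PropW}(iii) guarantees that the one-sided limits coincide at every interior node, so $[u]^n = 0$ for $n = 1,\dots,N-1$; and $[u]^0 = u_+^0 = u(\cdot,0) = 0$ since $u \in \Wo$. Therefore
\[
  \enorm{u}^2 \;=\; \|u\|_W^2 \;+\; \tfrac12\,\|u_-^N\|_T^2,
\]
and Theorem~\ref{PropW}(ii) provides $\|u_-^N\|_T \leq c\,\|u\|_W$ with $c$ independent of $T$ and $N$. Chaining the two inequalities yields $\enorm{u} \leq c\,\|f\|_{H'}$ as required, and $N$-independence is automatic since the broken-norm contributions reduce to a single trace at $t=T$.

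The one delicate point I would have to defend is the $T$-independence of the trace constant in Theorem~\ref{PropW}(ii), which is inherited from the Bochner-space embedding $\widehat W \hookrightarrow C([0,T];L^2(\Gamma_0))$ in Lemma~\ref{lem4}; the standard proof via $\|u(t)\|^2 = \|u(0)\|^2 + 2\int_0^t\la\partial_t u,u\ra\,d\tau$ gives a constant that is uniform in $T$ on intervals bounded away from zero, which is the regime of interest. If one wished to avoid this dependency on external Bochner machinery, a fully intrinsic alternative is available: mimic the proof of Lemma~\ref{thmstab} with $v = u$ in place of $v = u_\gamma$, and control the term $\tfrac12(\DivG\bw,u^2)_0 + \nu_d\|\nablaG u\|_0^2$ from below by $\tfrac12 c_0\|u\|_0^2 + \tfrac12\nu_d\|\nablaG u\|_0^2$ using the Friedrichs inequality \eqref{Fr} together with \eqref{ass7}, exactly as in Proposition~\ref{Prop2}; the duality argument controlling $\|\dot u\|_{H'}$ then proceeds verbatim as in Lemma~\ref{thmstab}, with $\mu$ chosen independently of $T$.
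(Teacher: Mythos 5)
Your proof is correct, but it takes a different route from the paper's. The paper's proof also begins by identifying the solution of \eqref{brokenweakformu} with the solution of \eqref{weakformu} via Lemma~\ref{lem_aux1} and Theorem~\ref{th_br}, but then it re-runs the broken inf-sup argument of Lemma~\ref{thmstab} with $\gamma=0$, using the Friedrichs inequality \eqref{Fr} and \eqref{ass7} to control $\|u\|_H$ directly from the bilinear form --- i.e.\ precisely the ``fully intrinsic alternative'' you sketch in your last sentences. Your primary route instead imports the $T$-uniform bound $\|u\|_W\le c_s^{-1}\|f\|_{H'}$ from Proposition~\ref{Prop2} and only has to account for the extra terms in $\enorm{\cdot}$, which collapse because the jumps of a $W$-function vanish. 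What your route buys is economy: no re-derivation of the stability estimate in the broken setting. What it costs is that the jump and terminal-trace contributions to $\enorm{u}$ are no longer controlled by the bilinear form itself but must be recovered a posteriori, which forces you to invoke the trace inequality \eqref{rrt} and to defend the $T$-uniformity of its constant --- a dependence the paper never needs to examine because in Lemma~\ref{thmstab} the terms $\|u_-^N\|_T^2$ and $\|[u]^{n-1}\|_{t_{n-1}}^2$ come out of $\la\dot u,u\ra_b+d(u,u)$ for free. You identify this delicate point correctly, and your justification (uniformity of the $\widehat W\hookrightarrow C([0,T];L^2(\Gamma_0))$ constant for $T$ bounded away from zero) is adequate.

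Two small imprecisions worth fixing. First, $\enorm{u}^2=\|u\|_{W^b}^2+\tfrac12\|u_-^N\|_T^2$ when the jumps vanish, and $\|u\|_{W^b}\le\|u\|_W$ is an inequality, not an equality: you need the observation that $\sum_{n=1}^N\|\dot u_n\|_{H_n'}^2\le\|\dot u\|_{H'}^2$, which follows by assembling near-optimal test functions $\phi_n\in C_0^1(\Gs^n)$ with disjoint supports (the same device used in the proof of Lemma~\ref{thmstab}); this is also where the $N$-independence of that part of the norm actually comes from. Second, when you cite Proposition~\ref{Prop2} for the a priori bound, note that the inf-sup there is stated over the subspace $\widetilde W_0$, so you need $u^\star\in\widetilde W_0$ (which the proposition supplies) before dividing through; as written you pass over this silently.
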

\begin{proof} Thanks to assumptions and  Proposition~\ref{Prop2} we know that the solution $u$ to \eqref{weakformu} is from $\widetilde{W}_0$. Thus, $u$ satisfies
zero average condition~\eqref{average} and due to Lemma~\ref{lem_aux1} and Theorem~\ref{th_br} this is also the unique solution to \eqref{brokenweakformu}.
Therefore, we may make use of the Friedrichs inequality \eqref{Fr} and prove the a priori estimate \eqref{est_br} following the lines of the proofs of Lemma~\ref{thmstab} with $\gamma=0$.
\end{proof}

\section{Finite element method} \label{sectderivation}\label{sec6}

We introduce a conforming finite element method with respect to the time-discontinuous formulation
\eqref{brokenweakformu}. The method extends the Eulerian finite element approach from
\cite{OlshReusken08,OlshanskiiReusken08,ORX} and uses traces of volume space-time finite element
functions on $\Gs$ (the practical implementation uses a piecewise linear approximation of $\Gs$, as explained below).

To define our finite element space $W_{h,\Delta t}\subset W^b$, consider  the partitioning of the space-time volume domain $Q= \Omega \times (0,T] \subset \Bbb{R}^{d+1}$ into time slabs  $Q_n:= \Omega \times I_n$. Corresponding to each time interval $I_n:=(t_{n-1},t_n]$ we assume a given shape regular simplicial triangulation $\T_n$ of the spatial domain $\Omega$. The corresponding spatial mesh size parameter is denoted by $h$. For convenience we use a uniform time step $\Delta t=t_n-t_{n-1}$. Then
$\mathcal{Q}_h=\bigcup\limits_{n=1,\dots,N}\T_n\times I_n$ is a subdivision of $Q$ into space-time
prismatic nonintersecting elements. We shall call $\mathcal{Q}_h$ a space-time triangulation of $Q$.
Note that this triangulation is not necessarily fitted to the surface $\Gs$. We allow $\T_n$ to vary with $n$ (in practice, during time integration one may wish to adapt the space triangulation depending on the changing local geometric properties of the surface) and so
the elements of $\mathcal{Q}_h$ may not  match at $t=t_n$.

For any $n\in\{1,\dots,N\}$, let $V_n$ be the finite element space of continuous piecewise linear functions on $\T_n$. In Section~\ref{sec8}, we comment on the case of higher order finite element spaces.
First we define the volume time-space finite element space:
\[
V_{h,\Delta t}:= \{ \, v: Q \to \Bbb{R} ~|~  v(x,t)= \phi_0(x) + t \phi_1(x)~\text{on every}~Q_n,~\text{with}~\phi_0,\, \phi_1 \in V_n\,\}.
\]
Thus, $V_{h,\Delta t}$ is a space of piecewise bilinear functions with respect to  $\mathcal{Q}_h$, continuous in space and discontinuous in time. Now we define our surface finite element space as a space
of traces of functions from $V_{h,\Delta t}$ on $\Gs$:
\[
  W_{h,\Delta t} := \{ \, w:\Gs \to \Bbb{R}~|~ w=v_{|\Gs}, ~~v \in V_{h,\Delta t} \, \}.
\]
The finite element method reads: Given $f\in H'$, find $u_h \in W_{h,\Delta t}$ such that
\begin{equation} \label{brokenweakformu_h}
  \la \dot u_h ,v_h\ra_b +a(u_h,v_h)+d(u_h,v_h) =\la f,v_h\ra \quad \text{for all}~~v_h \in W_{h,\Delta t}.
\end{equation}
As usual in time-DG methods,  the initial condition for $u_h(\cdot,0)$ is treated in a weak sense and is  part of the $d(\cdot,\cdot)$-term.
Since, $u_h\in C^1(Q_n)$ for all $n=1,\dots,N$, the first term  in \eqref{brokenweakformu_h}
can be written as
\[
\la \dot u_h ,v_h\ra_b=\sum_{n=1}^N\int_{t_{n-1}}^{t_n}\int_{\Gamma(t)} (\frac{\partial u_h}{\partial t} +\bw\cdot\nabla u_h)v_h ds\,dt.
\]
Note that this formulation  allows one to solve the space-time problem in a time marching way, time slab by time slab.

At this moment, we have no proof of an inf-sup stability result for the fully discrete problem \eqref{brokenweakformu_h}, similar to the one in Lemma~\ref{thmstab}. A specific technical difficulty in adopting the proof of Lemma~\ref{thmstab} to the discrete case is that an exponentially scaled trial function $u_\gamma=e^{\gamma t} u_h$ does not belong to the space of test functions anymore. One way out would be to modify the discrete space of test functions accordingly. Then, the required
stability result easily follows, but this leads to a finite element method dependent on the parameter $\gamma$. This is not what we use in practice.

Clearly, in the special case of condition \eqref{cond1}, one can prove the inf-sup inequality in the discrete case by
following the arguments of   Lemma~\ref{thmstab} with $\gamma=0$ and using the condition \eqref{cond1} to control the $H$-norm
of a trial function, cf. Proposition~\ref{thmST}.
Since for $\gamma=0$ the test function is taken the same as the trial function, the inf-sup inequality becomes the ellipticity result
 \begin{multline} \label{infsup2}
 \la \dot u_h, u_h\ra_b {+a(u_h,u_h)+d(u_h,u_h)}\\  \geq \min\{1,c_0\}(\nu_d  \|u\|_H^2 + \frac12  \|u_{-}^N\|_{T}^2 + \frac12 \sum_{n=1}^N  \|[u]^{n-1}\|_{t_{n-1}}^2)\quad\text{for all}~u_h \in W_{h,\Delta t}.
 \end{multline}
The space $W_{h,\Delta t}$ has a finite dimension, and hence the ellipticity result \eqref{infsup2} is sufficient for existence of a unique
solution. We summarize this in the form of the following proposition.
\begin{proposition} Assume \eqref{cond1}, then for any $f\in H'$ the problem \eqref{brokenweakformu_h}
has a unique solution $u_h \in W_{h,\Delta t}$ satisfying the a priori estimate
\[
\left(\nu_d  \|u\|_H^2 + \frac12  \|u_{-}^N\|_{T}^2 + \frac12 \sum_{n=1}^N  \|[u]^{n-1}\|_{t_{n-1}}^2\right)^{\frac12} \le \nu_d^{-\frac12}\max\{1,c_0^{-1}\}  \|f\|_{H'}.
\]
\end{proposition}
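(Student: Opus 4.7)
The plan is to read off the proposition directly from the ellipticity estimate \eqref{infsup2}, which the preceding discussion already isolates as the key discrete analogue of Lemma~\ref{thmstab}. I would organize the proof in three short steps: (1) legitimize \eqref{infsup2} under assumption \eqref{cond1}, (2) deduce uniqueness and (via finite-dimensionality) existence, (3) extract the a priori bound by testing with $v_h=u_h$ and using Cauchy--Schwarz.

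For step (1), I would repeat the argument of Lemma~\ref{thmstab} with $\gamma=0$, so that the test function coincides with the trial function and thus stays in $W_{h,\Delta t}$. The only place the exponential scaling was used there was to absorb the lower-order terms $(\alpha u,u)_0-\tfrac12(u,u\DivG\bw)_0$; under \eqref{cond1} this quantity is bounded below by $c_0\|u_h\|_0^2$, which together with the telescoping identity for the jump/trace terms (exactly as in \eqref{egk}--\eqref{kk} but with all exponential weights equal to $1$) yields
\[
\la\dot u_h,u_h\ra_b+a(u_h,u_h)+d(u_h,u_h)\ge \min\{1,c_0\}\Bigl(\nu_d\|u_h\|_H^2+\tfrac12\|u_{h,-}^N\|_T^2+\tfrac12\sum_{n=1}^N\|[u_h]^{n-1}\|_{t_{n-1}}^2\Bigr).
\]
This is \eqref{infsup2}.

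For step (2), \eqref{brokenweakformu_h} is a square linear system on the finite-dimensional space $W_{h,\Delta t}$; if $f=0$, testing with $v_h=u_h$ and invoking \eqref{infsup2} forces $\|u_h\|_H=0$ and hence $u_h\equiv 0$, so the system matrix is injective and therefore bijective. Existence and uniqueness for arbitrary $f\in H'$ follow.

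For step (3), testing \eqref{brokenweakformu_h} with $v_h=u_h$, applying \eqref{infsup2} on the left and $|\la f,u_h\ra|\le\|f\|_{H'}\|u_h\|_H$ on the right, and writing
\[
E:=\nu_d\|u_h\|_H^2+\tfrac12\|u_{h,-}^N\|_T^2+\tfrac12\sum_{n=1}^N\|[u_h]^{n-1}\|_{t_{n-1}}^2,
\]
the estimate $\|u_h\|_H\le \nu_d^{-1/2}E^{1/2}$ and the identity $\min\{1,c_0\}^{-1}=\max\{1,c_0^{-1}\}$ give $E^{1/2}\le \nu_d^{-1/2}\max\{1,c_0^{-1}\}\|f\|_{H'}$, which is the claimed bound.

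There is no real obstacle here: the only nontrivial ingredient is the discrete coercivity \eqref{infsup2}, whose derivation is dictated by the proof of Lemma~\ref{thmstab} and made possible precisely because condition \eqref{cond1} removes the need for the exponential time-weight that would otherwise push the test function outside $W_{h,\Delta t}$.
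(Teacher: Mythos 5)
Your proposal is correct and follows essentially the same route as the paper: the paper's own justification is precisely the preceding discussion establishing the ellipticity estimate \eqref{infsup2} by running the proof of Lemma~\ref{thmstab} with $\gamma=0$ under assumption \eqref{cond1}, followed by finite-dimensionality for existence/uniqueness; your step (3) merely makes explicit the Cauchy--Schwarz and $\min\{1,c_0\}^{-1}=\max\{1,c_0^{-1}\}$ bookkeeping that the paper leaves implicit.
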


The special case of the diffusion surface problem as described in propositions~\ref{Prop2} and ~\ref{Prop2b}
is less straightforward to handle, since in the discrete setting, the method is not pointwise conservative, i.e.
the zero average condition \eqref{average} can be satisfied only approximately. Stability analysis of the
finite element problem \eqref{brokenweakformu_h}  in this interesting case is presented in the recent report \cite{refARrecent}.

Before presenting numerical results, we comment on a few implementation aspects of our surface finite element
method. More details are found in the recent report \cite{refJoerg}.

By choosing the test functions $v_h$ in \eqref{brokenweakformu_h} per time slab, as in standard space-time DG methods, one obtains an implicit time stepping algorithm. Two implementation issues are the approximation of the space-time integrals in the bilinear form $\la \dot u_h ,v_h\ra_b +a(u_h,v_h)$ and the representation of the the finite element trace functions in $W_{h,\Delta t}$.  To approximate the integrals, we first make use of the
transformation formula \eqref{transform} converting space-time integrals to surface integrals over $\Gs$, and next we
approximate $\Gs$ by a `discrete' surface $\Gs^h$.  In our implementation, the approximate surface $\Gs^h$ is the zero level of $\phi_h\in W_{\hat h, \hat{\Delta t}}$,
where $\phi_h$ is the nodal interpolant of a level set function $\phi(x,t)$, the zero level of which is the surface $\Gs$. In the experiments considered in the next section, for $\phi$ we use the signed distance function of $\Gs$. To reduce the ``geometric error'', the interpolation $\phi_h \in W_{\hat h, \hat{\Delta t}}$ can be done in a finite element space  with mesh size $\hat h < h$, $\hat{\Delta t} < \Delta t$. In the the next section examples, we use $\hat h = \frac 12 h$, $\hat{\Delta t} = \frac12 \Delta t$ (one regular refinement of the given
outer space-time mesh).

For the representation of  the finite element  functions in $W_{h,\Delta t}$, we consider traces of the standard nodal basis functions
in the volume space $V_{h,\Delta t}$. Obviously, only nodal functions corresponding to elements $P\in\mathcal{Q}$ such that $P\cap\Gs^h\neq0$ should be taken into account.  In general, these trace functions form a frame in $W_{h,\Delta t}$.
A finite element surface solution is represented as a linear combination of the elements from this frame.
Linear systems resulting in every time step  may have more than one solution, but every solution yields the same trace function, which is the unique solution of \eqref{brokenweakformu_h}. In the numerical experiments below we used a direct solver for computing the discrete solution. Linear algebra aspects of the surface finite element method  have been addressed in \cite{OlshanskiiReusken08} and will be further investigated in future work.

\section{Numerical experiment}\label{sec7}
In this section, we  present results of a few numerical experiments to illustrate  properties of the space-time finite element method introduced in Section~\ref{sectderivation}.

{\bf{ Example 1}.} First, we consider a shrinking sphere, represented as the zero level $\Gamma(t)$ of  the level set function
$
 \phi(x,t)=x_1^2+x_2^2+x_3^2-1.5^2 e^{-t}.
$
The initial sphere has a radius of $1.5$ at $t=0$. The corresponding velocity field is given by
$
\mathbf{w}={ -{\frac34 e^{-t/2}}}\mathbf{n},
$
where $\mathbf{n}$ is the unit outward normal on $\Gamma(t)$. Hence ${\Div}_\Gamma \bw = -1$. Hence, the coefficient $\alpha$ is negative and the condition \eqref{cond1} is not satisfied.
We choose a solution  $u(x,t)=(1+x_1x_2x_3) e^{t}$ and  thus the  right-hand side is given by
$
f(x,t)=(-1.5 e^{t}+\frac{16}{3} e^{2t})x_1x_2x_3.
$
The problem is solved by the space-time DG method in the time interval $0\leq t\leq 1$.

The outer domain is chosen as $\Omega=[-2,2]^3$. The outer spatial triangulation $\mathcal T$ is a uniform tetrahedral triangulation of $\Omega$ with mesh size $h=h_k=2^{-k}$, $k=1,\ldots,4$. This triangulation is chosen independent of $n$.
 The time step is taken  as $\Delta t = \Delta t_{\ell}=2^{-\ell}$, $\ell=0, \ldots,4$. The outer space-time finite element space  $V_{h, \Delta t}$ and the induced trace space are defined as explained in the previous section.
The smooth space-time manifold $\Gs=\cup_{t \in [0,1]}\Gamma(t) \times \{t\} $ is approximated as follows. To a given outer space-time mesh one regular refinement (in space and time) is applied.
The approximate, piecewise affine,  surface $\Gs^h = \cup_{t \in [0,1]}\Gamma_h(t)$ is defined as the zero level set of the nodal interpolant  of
the level set function $\phi(t)$ on this  refined outer mesh. This approximate space-time manifold is constructed per time step. 

 For the computation of  discretization errors the continuous solution $u$ is extended by constant values in normal direction. This extension is denoted by $u^e$. The initial condition $u_h(t_0)$ is the trace of the nodal interpolant to $u^e(t_0)$.
We compute two approximate discrete errors as follows.
$$err_{L^2(t_N)}=\|u^e-u_h\|_{L^2(\Gamma_h(t_N))},$$
and
\begin{align*}
err_{L^2(H^1)} &=
\Big\{\frac{\Delta t}{2}\|\nabla_{\Gamma_h}(u^e-u^e)\|_{L^2(\Gamma_h(t_0))}^2 +\sum_{i=1}^{N-1}\Delta t\|\nabla_{\Gamma_h}(u^e-u_h)\|_{L^2(\Gamma_h(t_i))}^2
\\ & +\frac{\Delta t}{2}\|\nabla_{\Gamma_h}(u^e-u_h)\|_{L^2(\Gamma_h(t_N))}^2\Big\}^{1/2}.
\end{align*}
 Figure~\ref{fig:l2err} shows
the convergence behavior of  $L^2(\Gamma(t_N))$-error with respect to space (in the left subfigure) and time (in the right subfigure).
We observe that the  $L^2$ error is of order $O(h^2)$ in space and of $O(\Delta t^2)$ in time.
\begin{figure}[ht!]
 \centering
\subfigure[]{
    \includegraphics[width=2.1in]{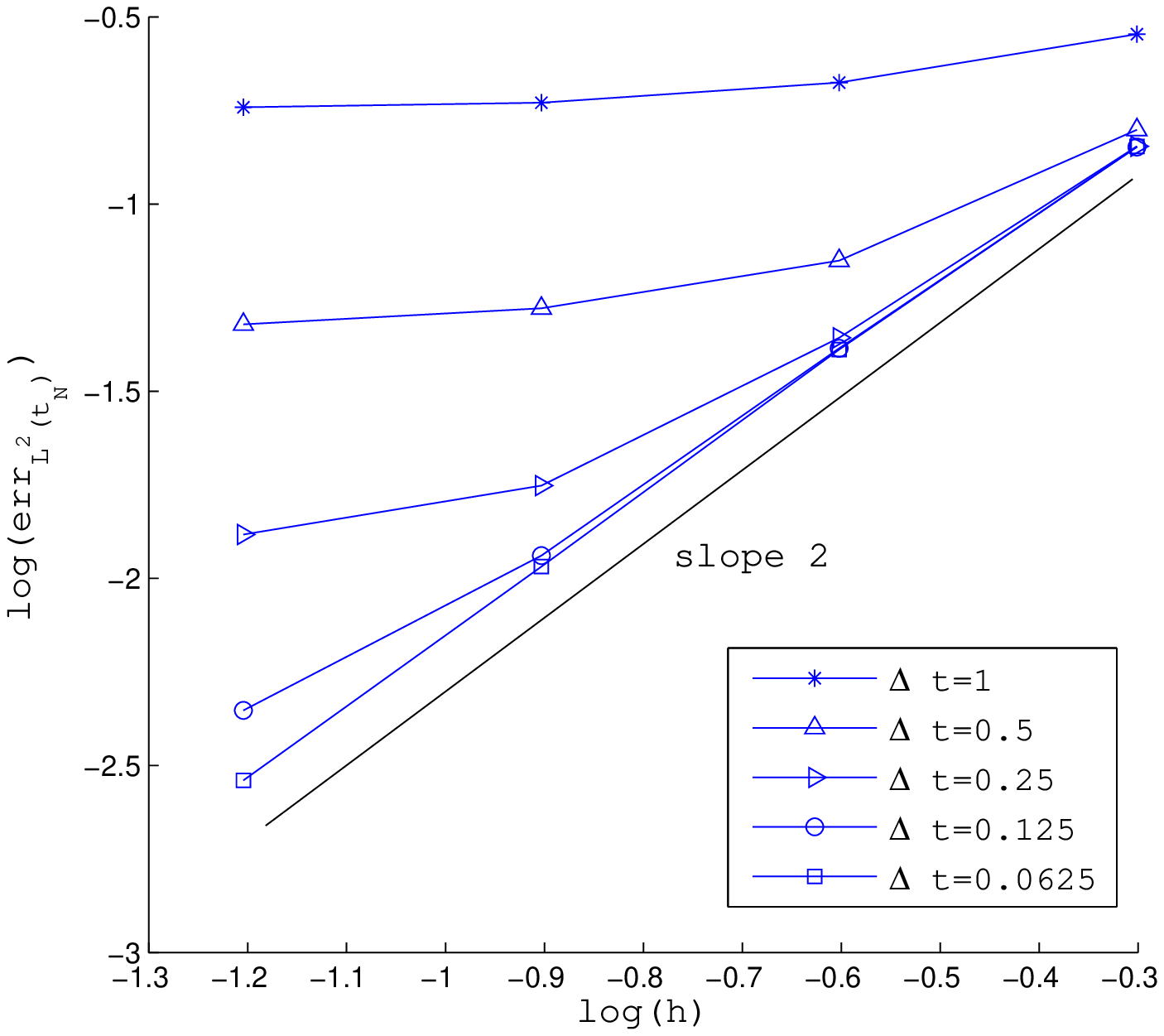}}
\subfigure[]{
    \includegraphics[width=2.1in]{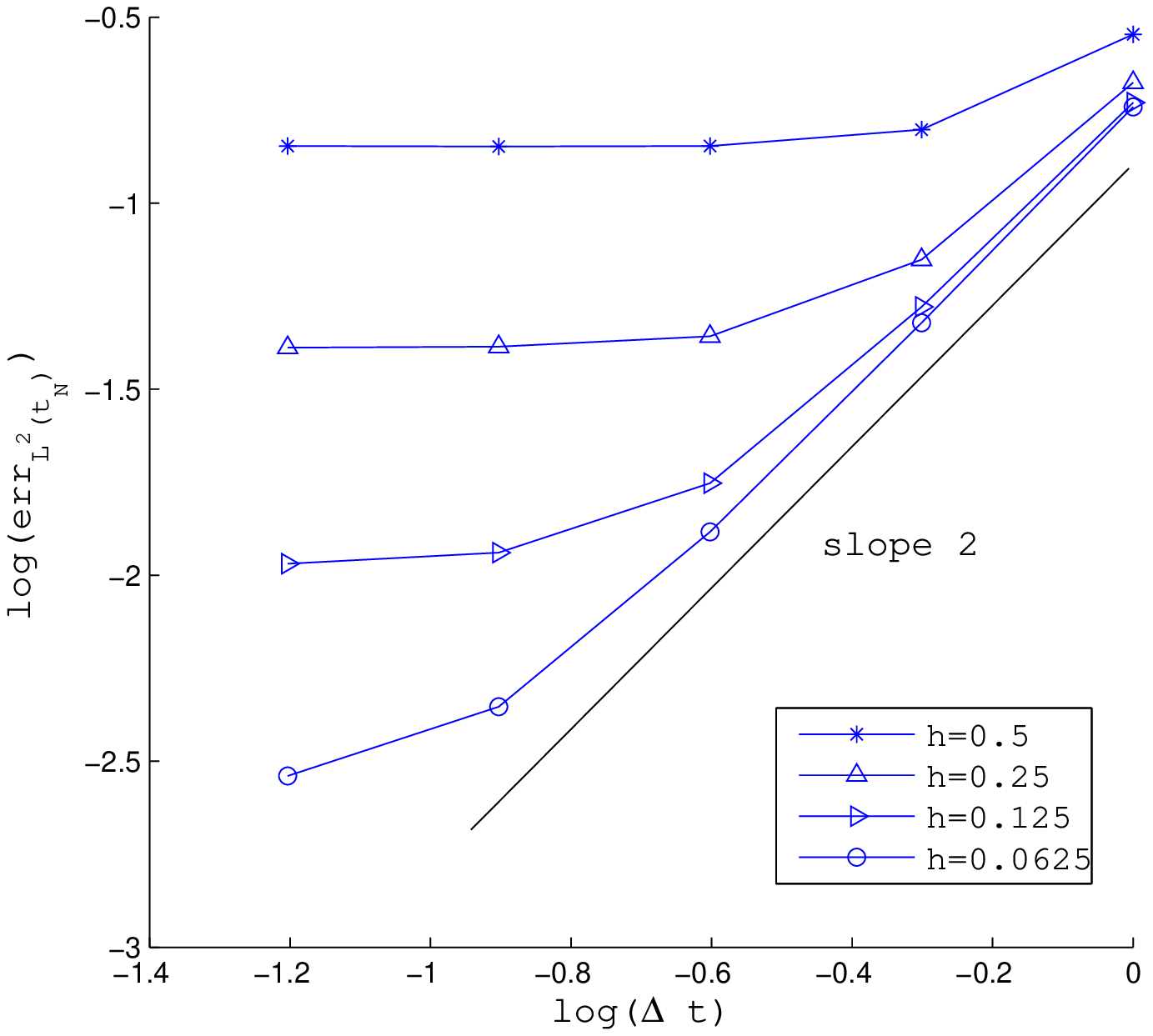}}
 \caption{Convergence w.r.t. $L^2(\Gamma(t_N))$ norm in Example 1.}
 \label{fig:l2err}
\end{figure}

 Figure~\ref{fig:h1err} shows
the convergence behavior of the $L^{2}(H^1)$-error with respect to space (in the left subfigure) and time (in the right subfigure).
 From the left subfigure it can be seen that the $L^{2}(H^1)$-error is of order $O(h)$ in space. The results in the right subfigure indicate that on the meshes used in this experiment
the $L^{2}(H^1)$-error is dominated by the space error.
Note that (very) large timesteps, even  $\Delta t=1$,  can  be used (even for small $h$), which indicates that the method has good stability properties.
\begin{figure}[ht!]
 \centering
\subfigure[]{
    \includegraphics[width=2.1in]{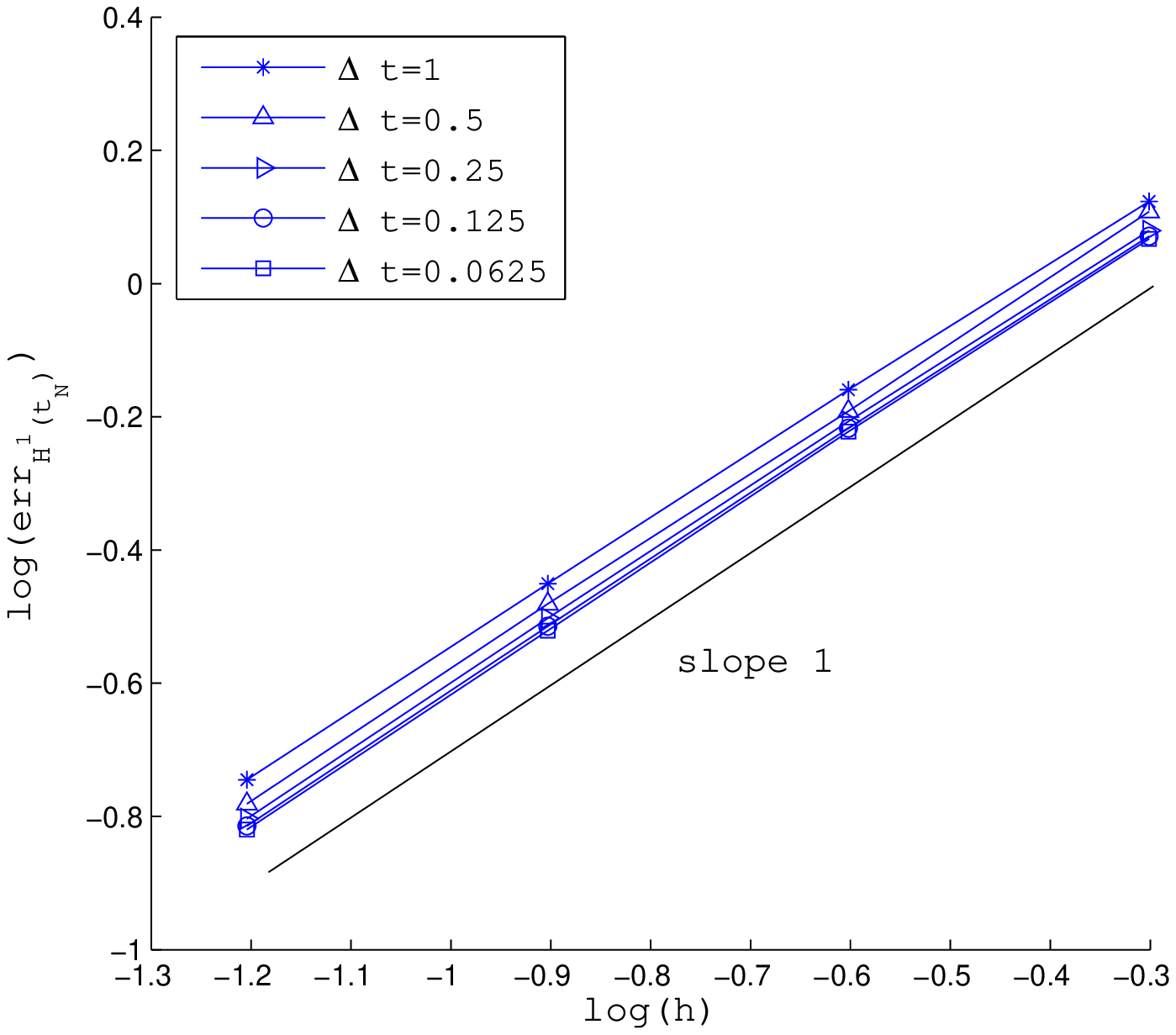}}
\subfigure[]{
    \includegraphics[width=2.1in]{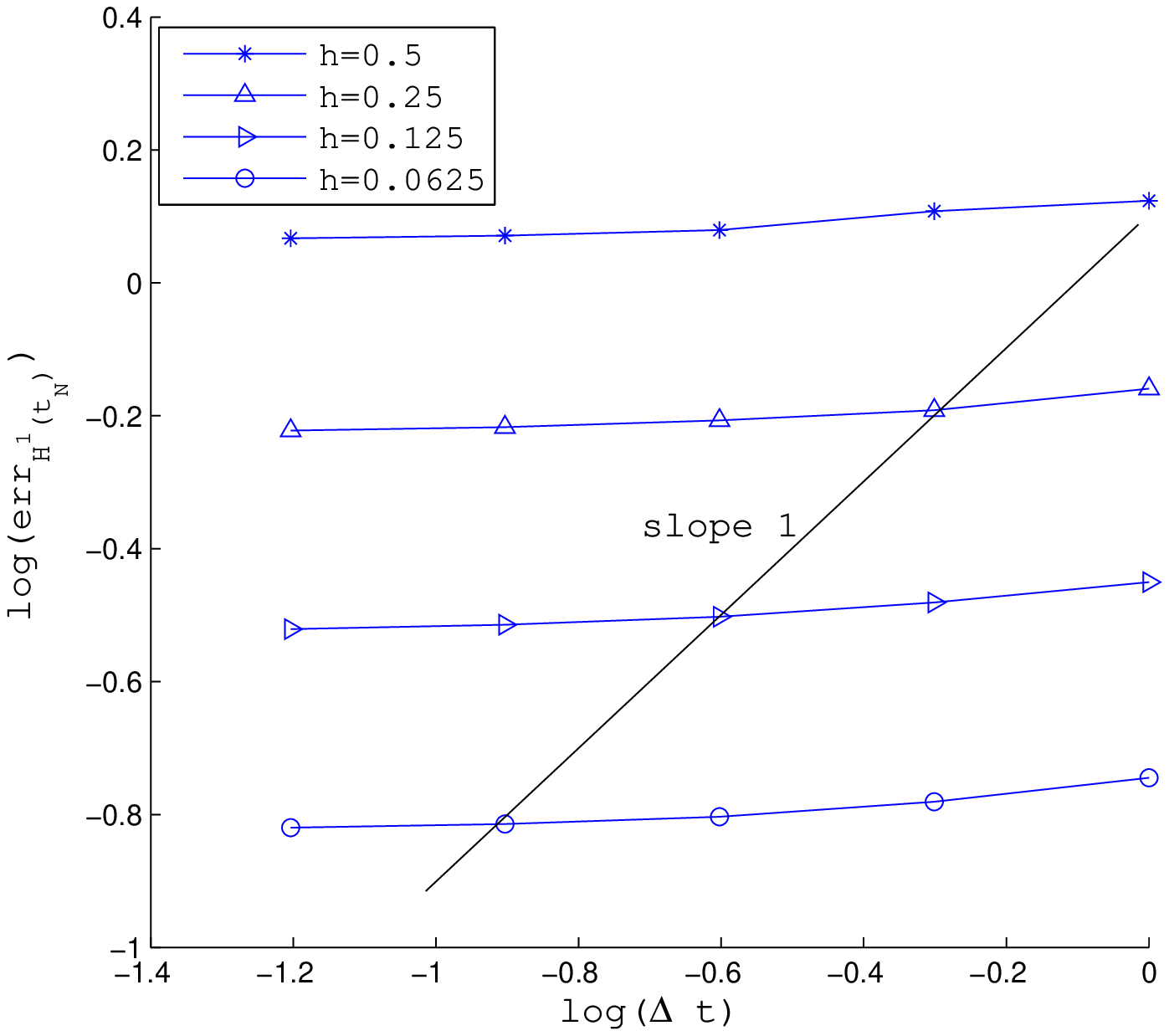}}

 \caption{Convergence w.r.t. $L^{2}(H^1)$ norm in Example 1.}
 \label{fig:h1err}
\end{figure}

To illustrate the convergence behavior of $H^1$-errors with respect to time, we  consider an {experiment} on the shrinking sphere, where the
 solution is given by
$u(x,t)=e^t,$
i.e. the function has maximal smoothness w.r.t. the spatial variable. A simple computation yields
$f=\frac{\partial u}{\partial t}+\bw\cdot\nablaG u+(\DivG \bw)u+\Delta_{\Gamma}u=\frac{\partial u}{\partial t}-u=0.$
 The $L^2(H^1)$-errors for this example are shown in Figure~\ref{fig:h1err_1}. From the left subfigure, we see that again
the $L^2(H^1)$-error is of order $O(h)$ in space, just as in the previous case. In the right subfigure we observe that  the error converges
in time with order $O(\Delta t)$.
\begin{figure}[ht!]
 \centering
\subfigure[]{
    \includegraphics[width=2.1in]{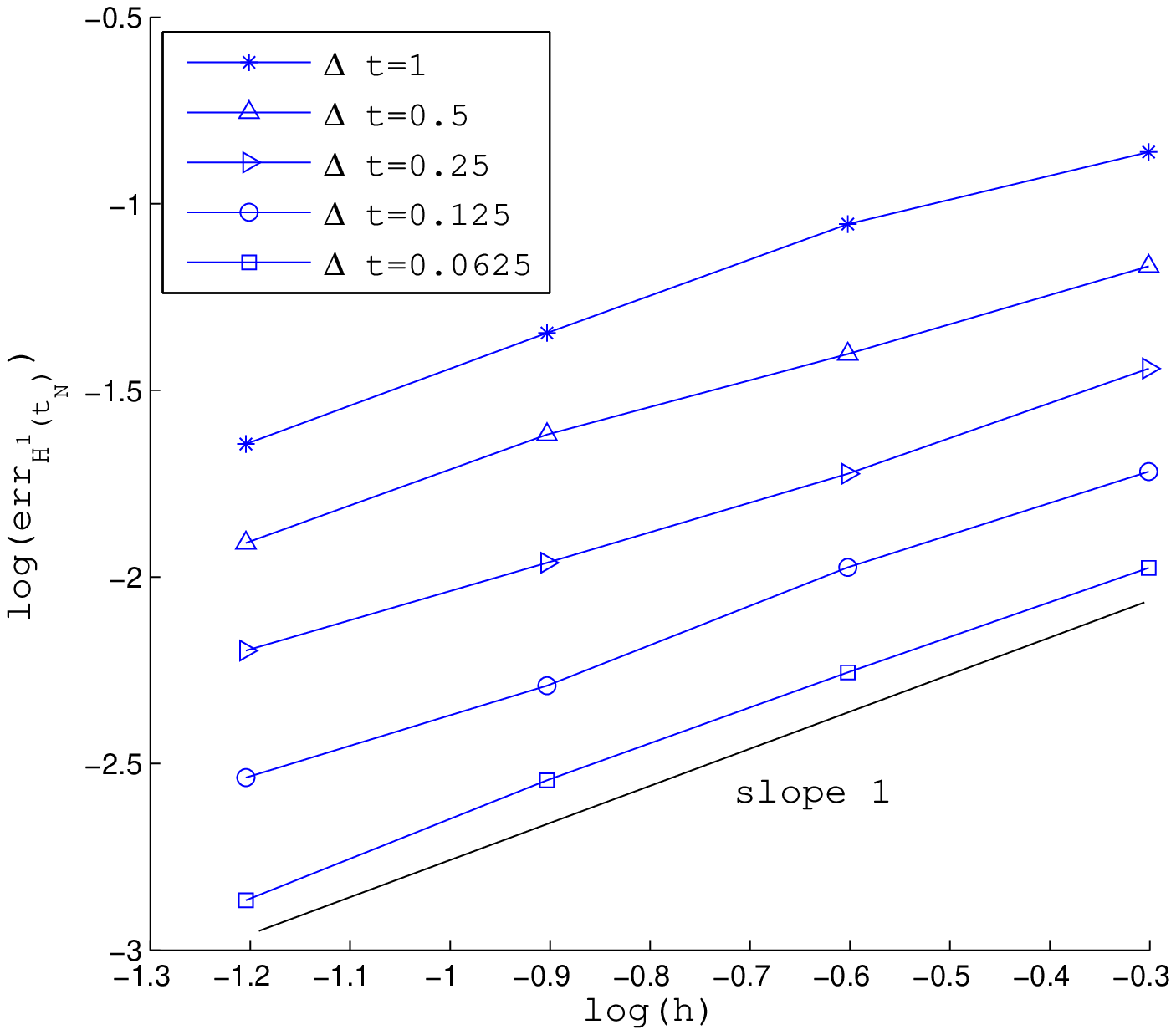}}
\subfigure[]{
    \includegraphics[width=2.1in]{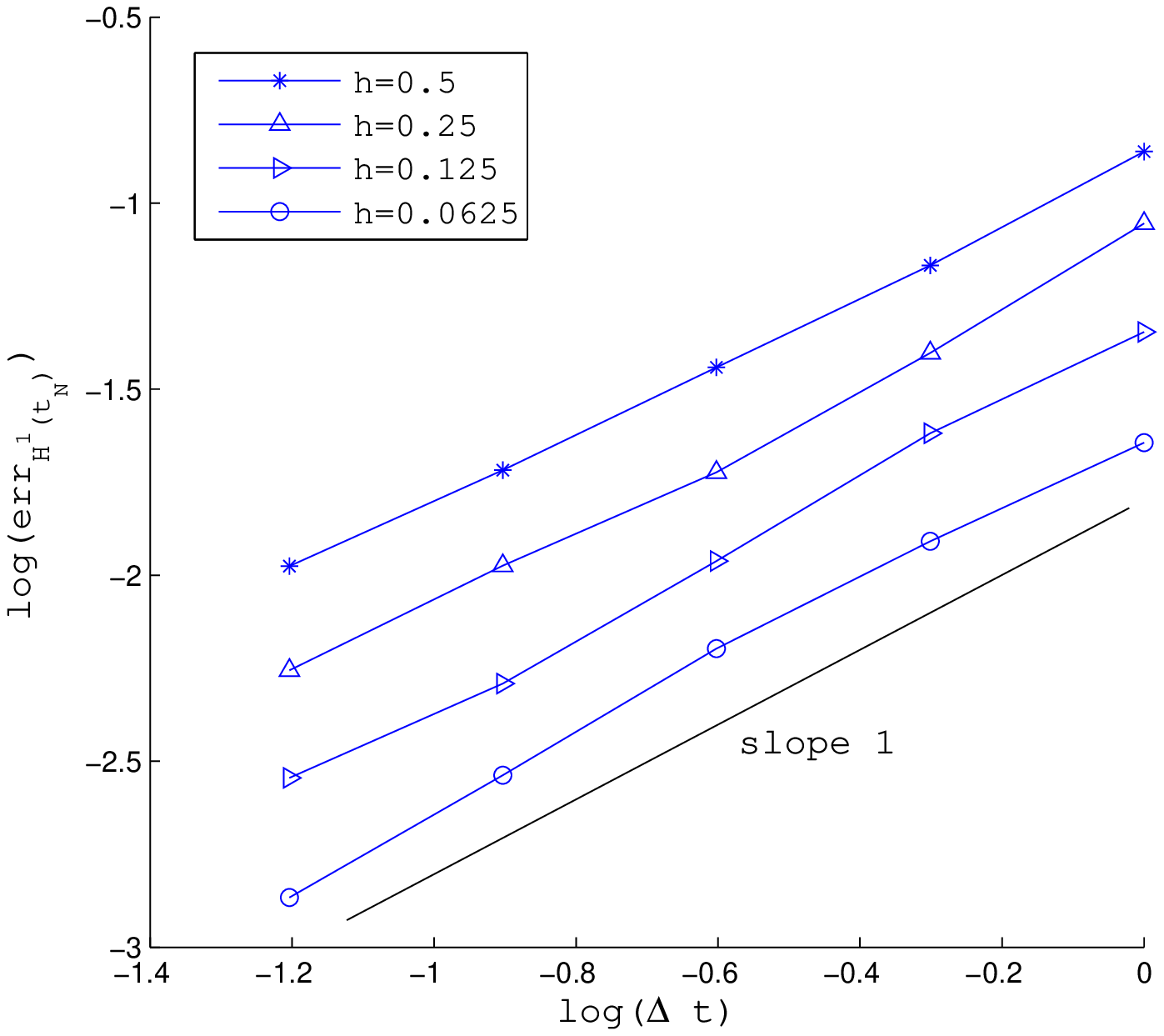}}

 \caption{Convergence w.r.t. $L^{2}(H^1)$ norm for $u(x,t)=e^t$.}
 \label{fig:h1err_1}
\end{figure}

{\bf Example 2.}  In this example, we consider a surface  diffusion problem as in \eqref{transport}  on a moving manifold. The initial manifold is given ({as in \cite{Dziuk88}}) by
$
\Gamma(0)=\{ \, x\in \Bbb{R}^3~|~ (x_1 -x_3^2)^2+x_2^2+x_3^2=1\, \}.
$
The velocity field that transports the surface is
$$\mathbf{w}(x,t)=\big(0.1x_1 \cos(t),0.2x_2 \sin(t),0.2x_3\cos(t)\big)^T.$$
 The initial concentration $u_0$ is chosen as
$u_0(x)=1+x_1 x_2 x_3.$

We set $\Delta t=0.1$ and compute the problem until $T=8$. The mesh size of the spatial outer mesh is $h=0.125$. An approximate surface $\Gs^h$ is constructed in the same way as in Example 1.
 In Figure~\ref{fig:dziuk} we show  the (aproximated) manifold and the discrete solution $u_h$ for different points in time.
\begin{figure}[ht!]
 \centering
 \begin{overpic}[width=0.45\textwidth]{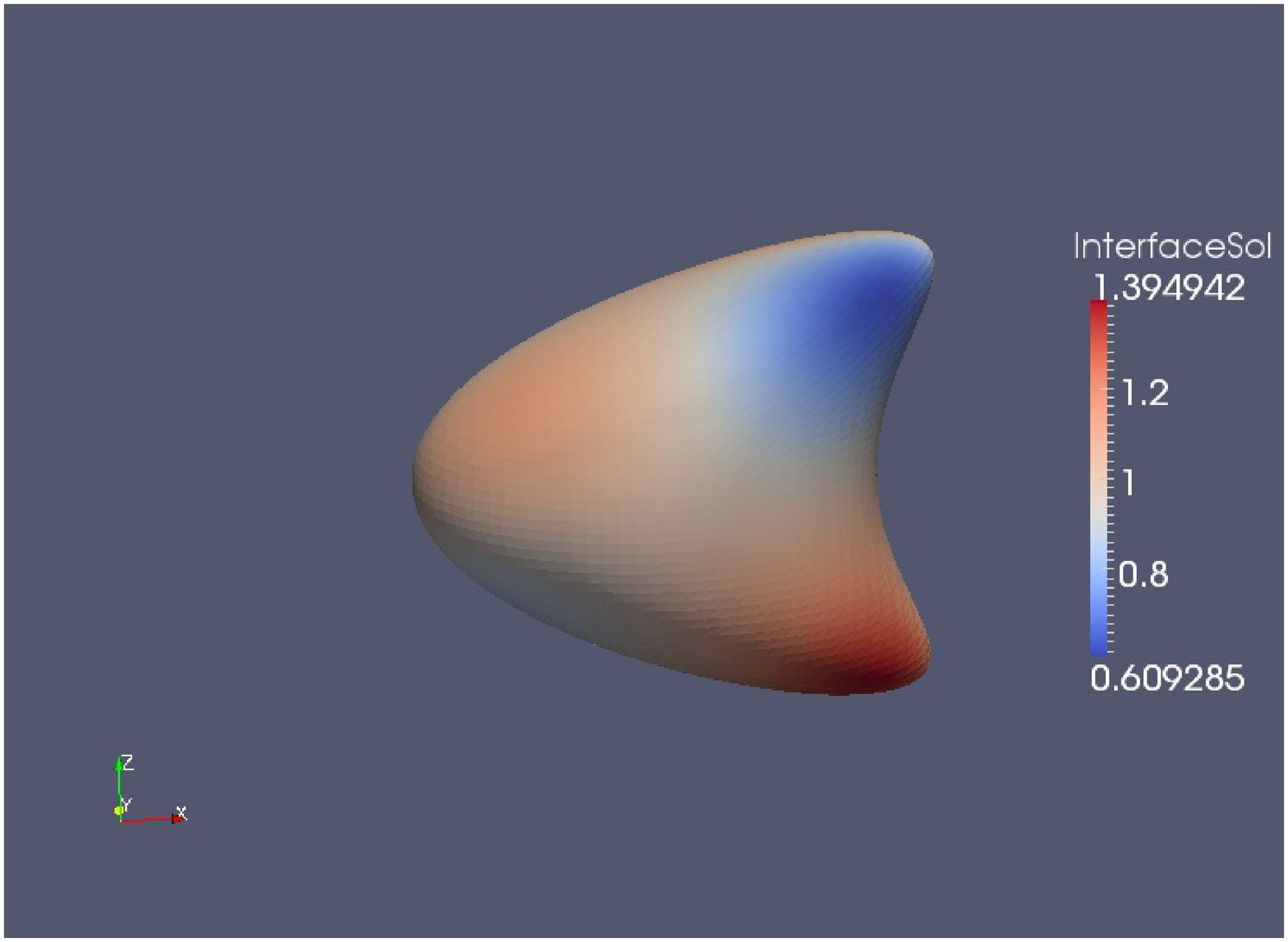}
  \end{overpic}
   \begin{overpic}[width=0.45\textwidth]{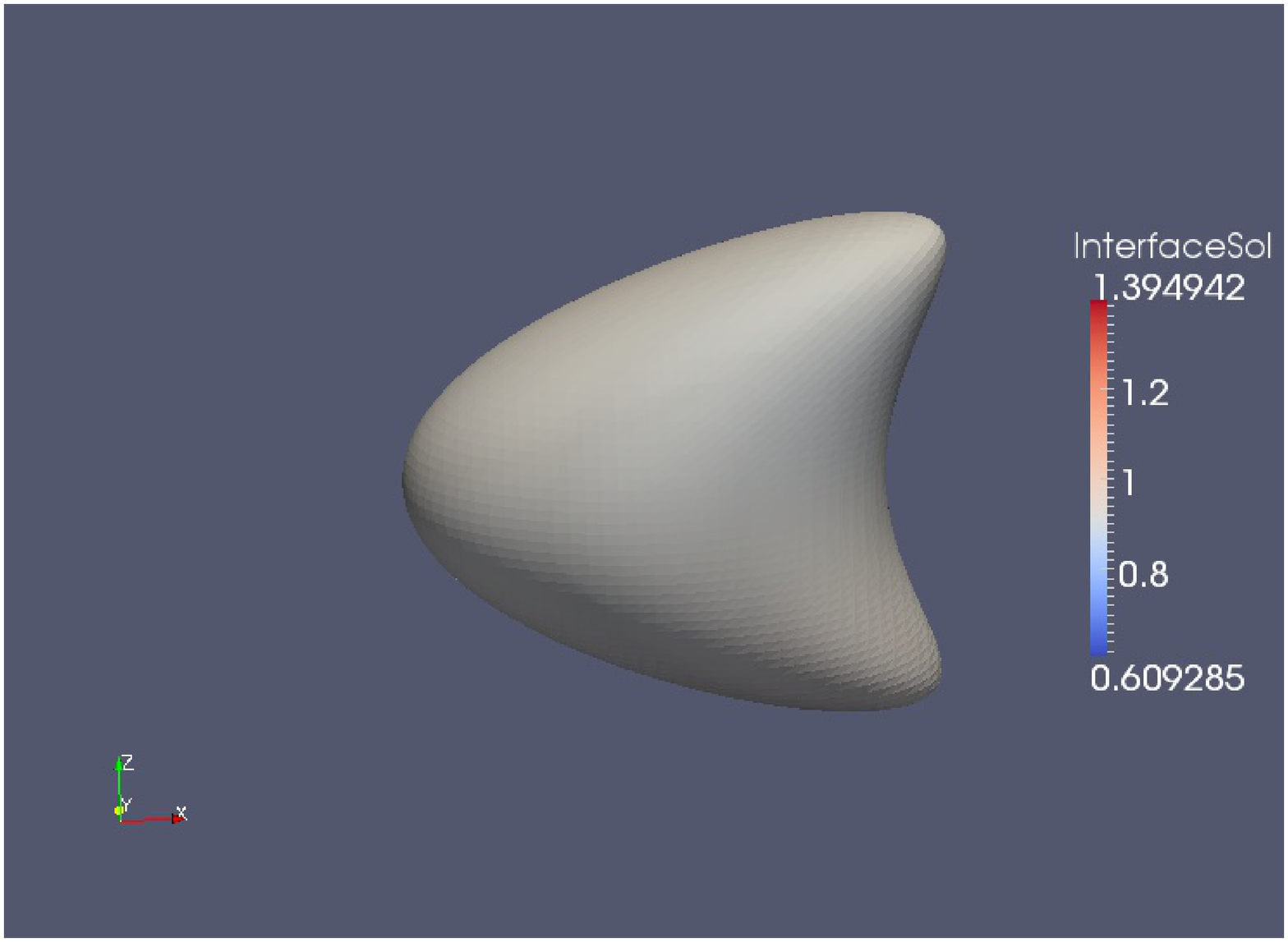}
  \end{overpic}
   \begin{overpic}[width=0.45\textwidth]{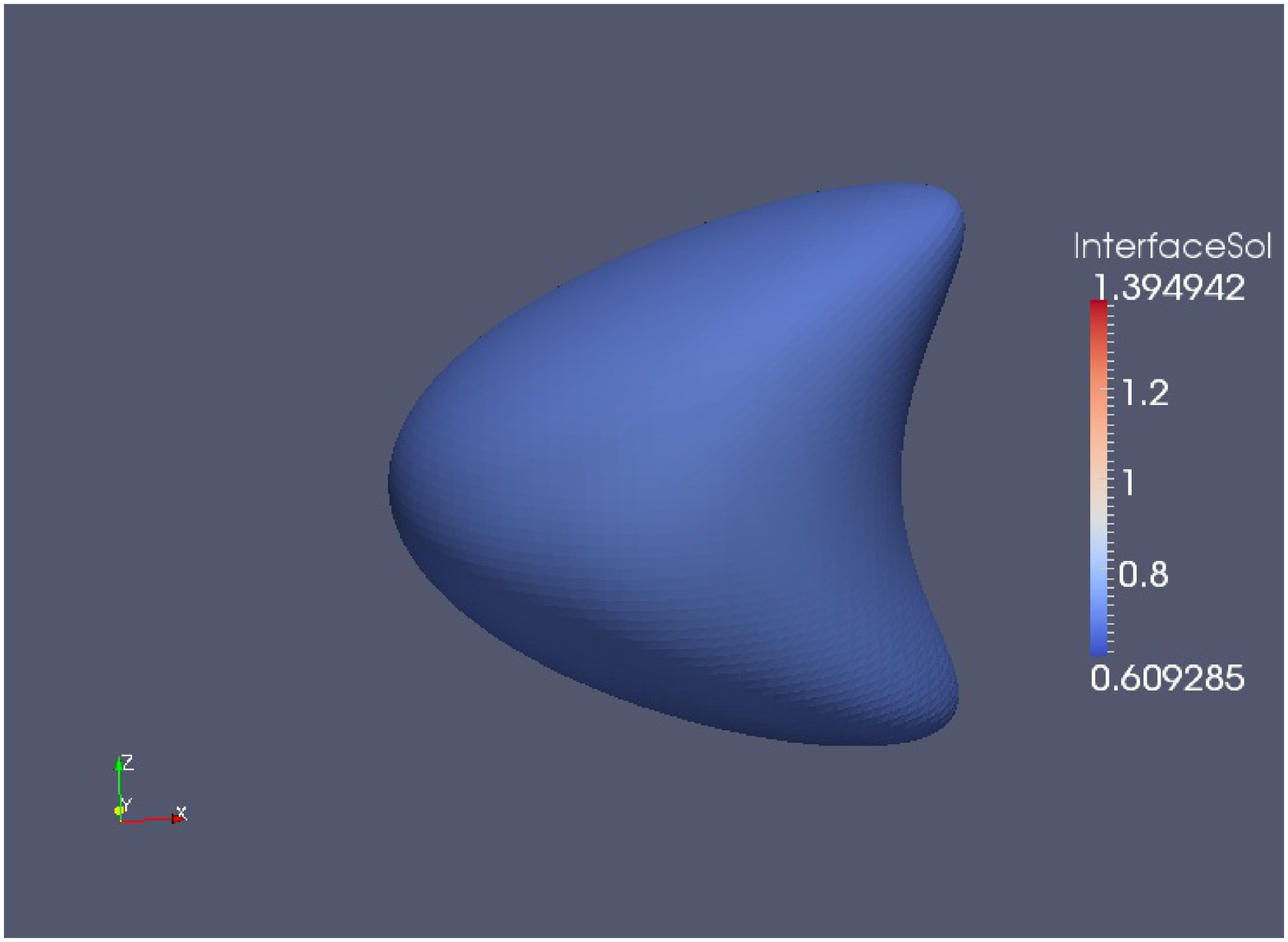}
  \end{overpic}
   \begin{overpic}[width=0.45\textwidth]{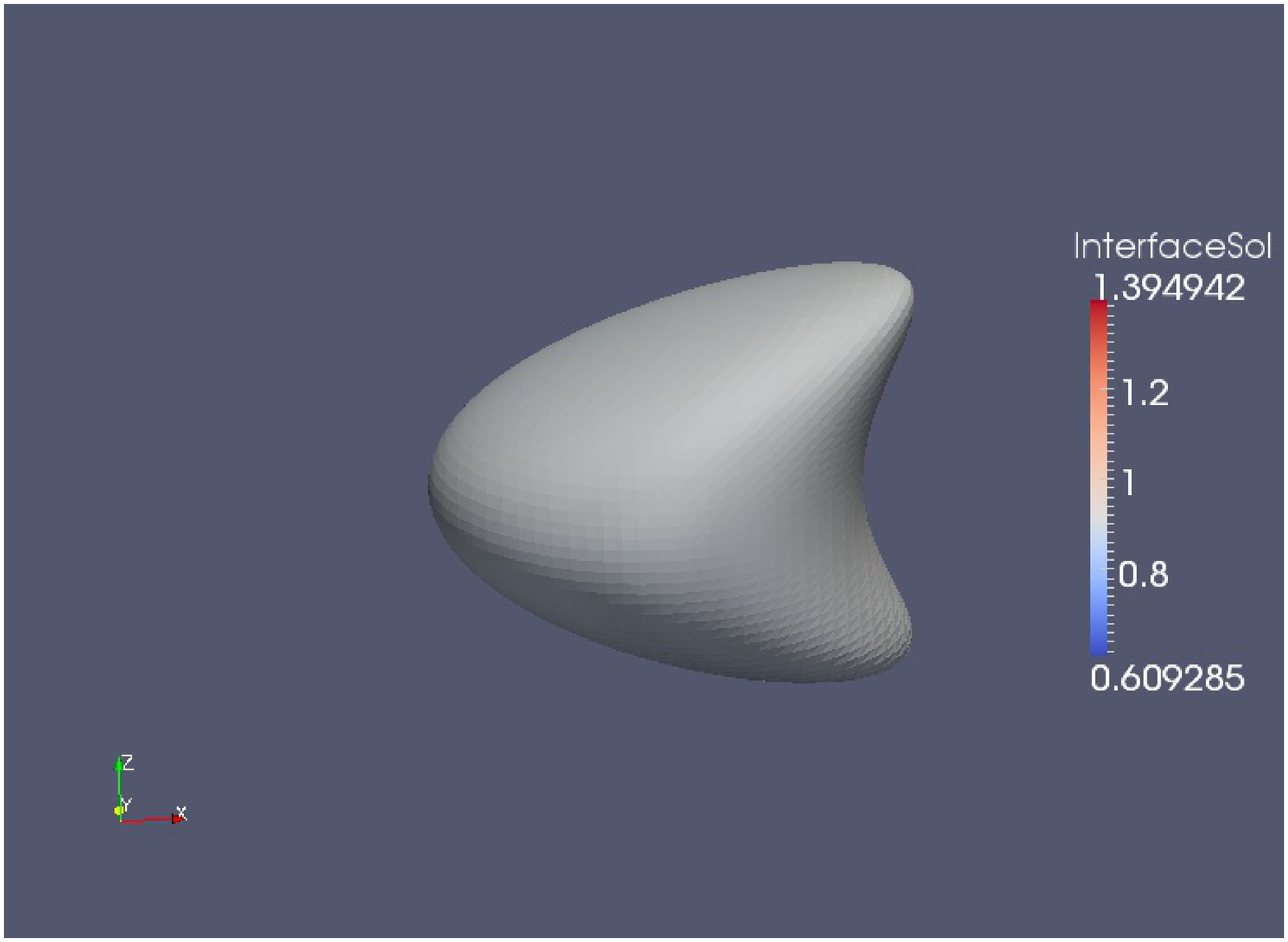}
  \end{overpic}
   \begin{overpic}[width=0.45\textwidth]{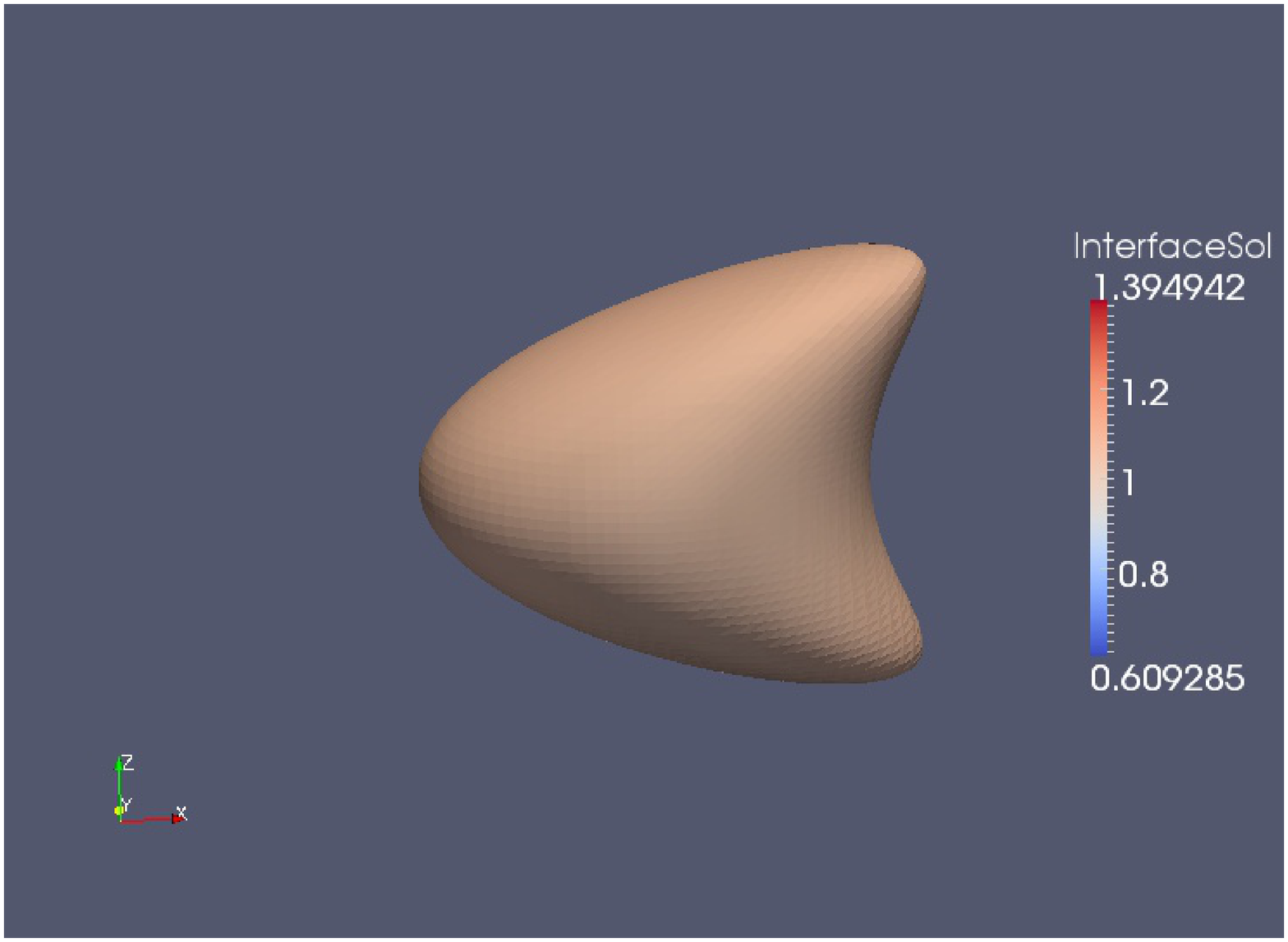}
  \end{overpic}
   \begin{overpic}[width=0.45\textwidth]{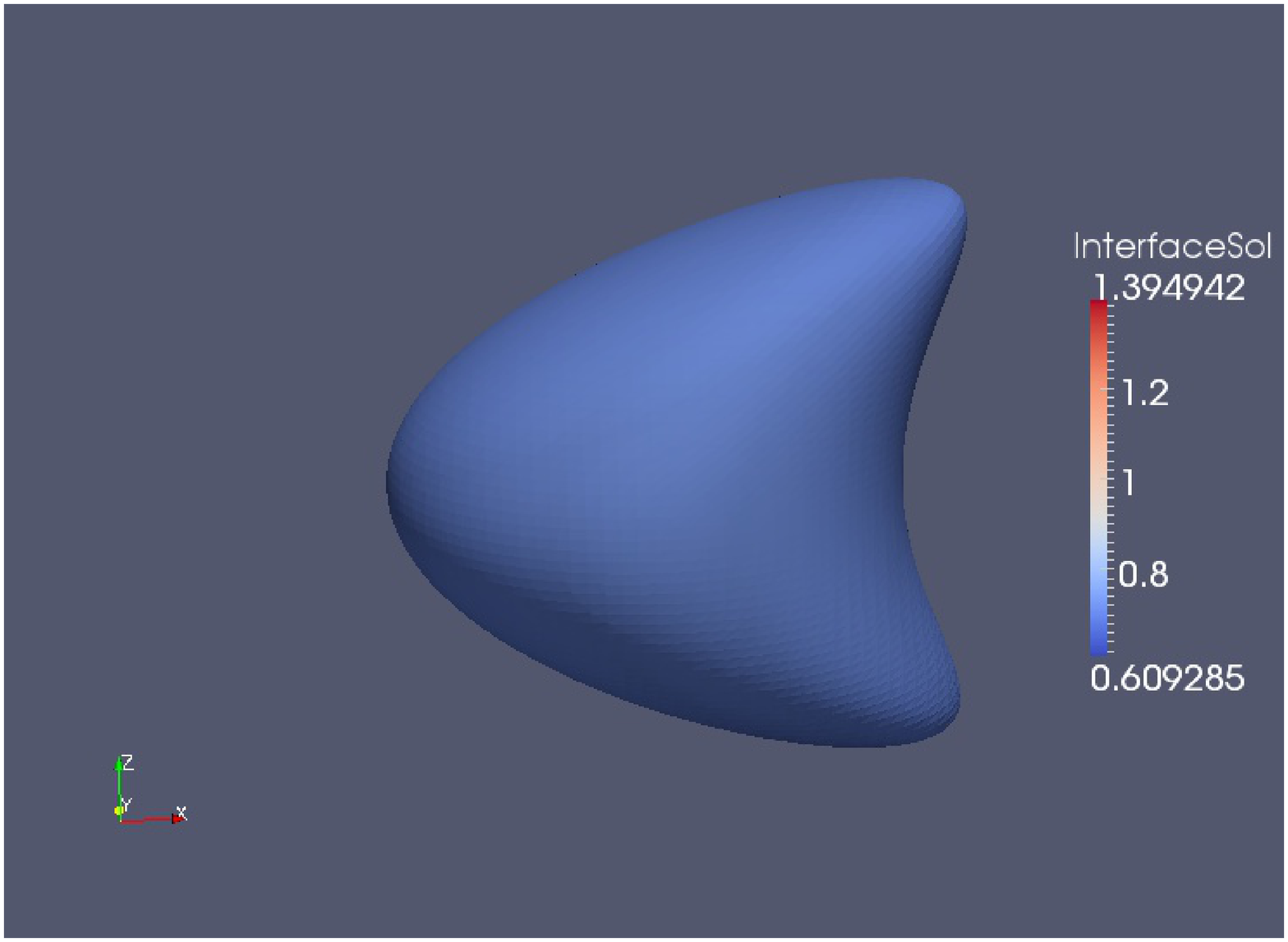}
  \end{overpic}
 \caption{Example~2: solutions for $t=0,0.4,2,4,6,8$.}\label{fig:dziuk}
\end{figure}

In this problem the total mass
$
M(t)=\int_{\Gamma(t)}u(\cdot,t) \, ds
$
is conserved  and equal to
$M(0)=|\Gamma(0)|\approx 13.6083.$
 We check how well the discrete analogon $
M_h(t)=\int_{\Gamma_h(t)}u_h(\cdot, t) \, ds
$ is conserved.
 In Figure~\ref{fig:mass:a}, for $t \in [0,T]$ this quantity is illustrated for different mesh sizes $h$ and a fixed time step $\Delta t=0.1$.
    In Figure~\ref{fig:mass:b},  the quantity is shown for different time steps $\Delta t$ and a fixed mesh size $h=0.125$.
  If we compute the average discrete mass $M_h:= \frac{1}{m} \sum_{j=1}^m M_h(t_j)$, with $t_1, \ldots, t_m$ the discrete time points (as shown in Fig.~\ref{figmass}) we obtain
for the absolute error $|M(0)-M_h|$ the numbers 0.2302, 0.0562, 0.0129, 0.0020 (corresponding to Fig.~\ref{fig:mass:a}) and 0.4973, 0.1126, 0.0208, 0.0052 (corresponding to Fig.~\ref{fig:mass:b}).
These results indicate that the method has a satisfactory discrete mass conservation property with a rate of convergence that is second order both with respect to $h$ and $\Delta t$.

Finally we note that the errors in the discretization are  caused not only by the space-time finite element discretization
but also by the geometric errors caused by the approximation of $\Gs$ by $\Gs^h$.

 \begin{figure}[ht!]
 \centering
\subfigure[]{\includegraphics[width=0.45\textwidth]{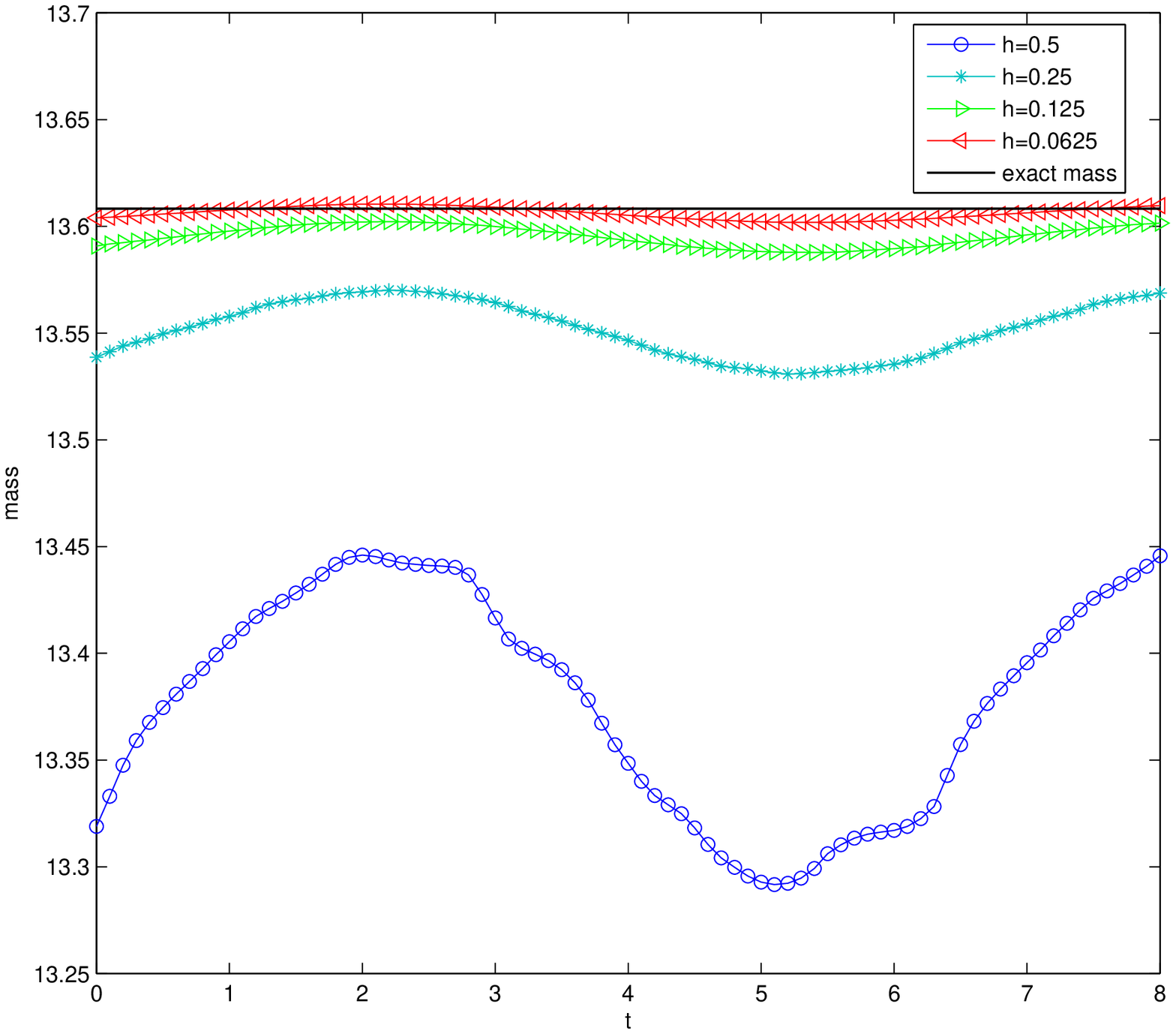}
\label{fig:mass:a}}
\subfigure[]{\includegraphics[width=0.45\textwidth]{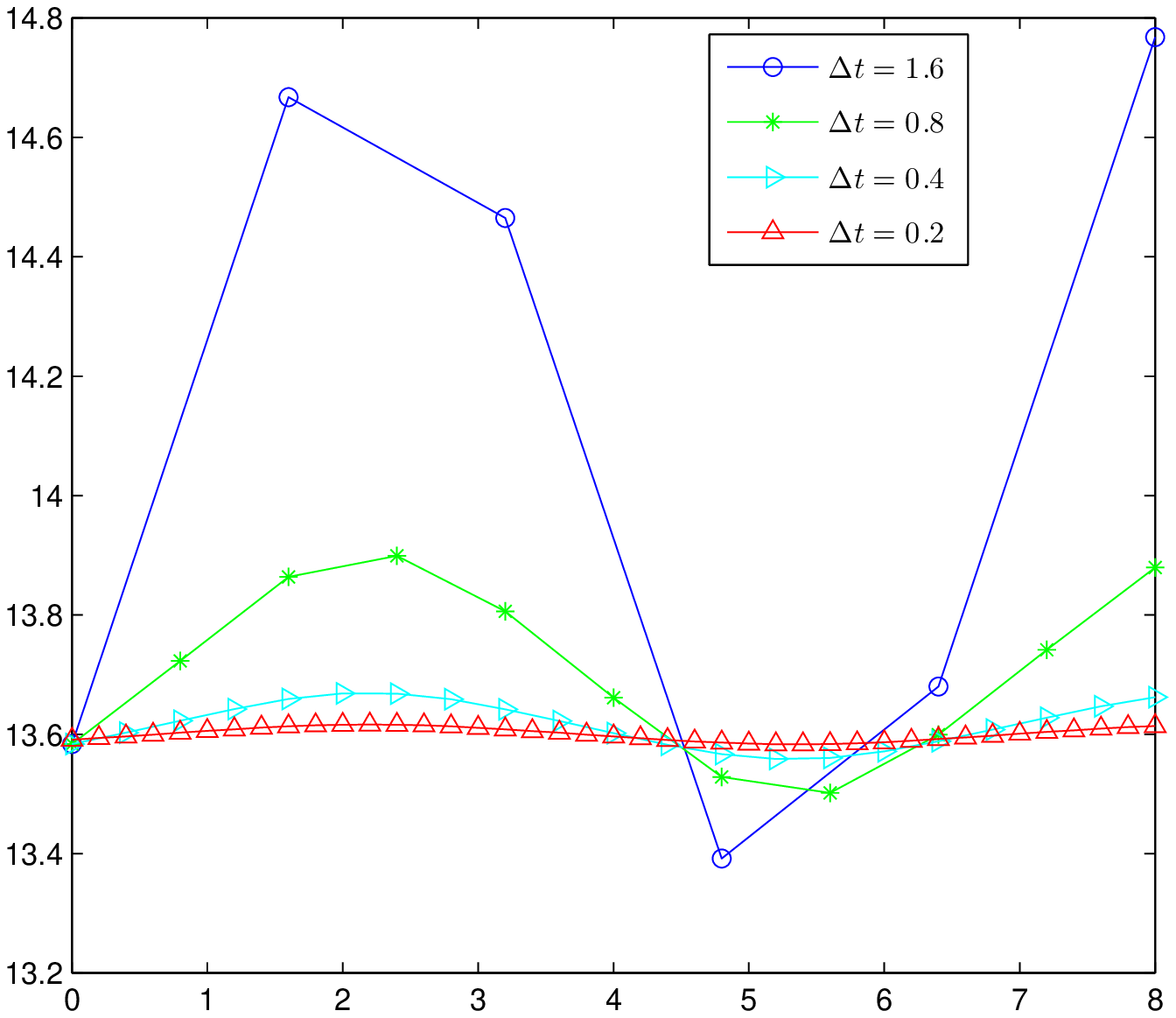}
\label{fig:mass:b}}
 \caption{Example~2: Discrete mass conservation.} \label{figmass}
\end{figure}

\section{Conclusions}\label{sec8}
In this paper we develop a mathematical framework for a new Eulerian finite element method for parabolic equations posed on evolving surfaces. The discretization method uses space-time elements.
 The space-time finite element method naturally relies on a space-time weak formulation. Such a formulation is
introduced and shown to be well-posed. The analysis  uses a smooth diffeomorphism between the space-time manifold and a reference domain.     This theoretical framework  does not allow to treat surfaces that undergo topological changes such as merging or splitting. The numerical method, however, can be applied in such situations.
Stabiliy of the discrete method is derived only for a  special case. Numerical experiments demonstrate stable behaviour and optimal convergence results in a more general setting. Extension of the finite element error analysis to more general problems is a topic of current research. In this paper, we consider only the case of piecewise linear (in space and time) finite elements. The method, however, is directly applicable with higher order finite elements. To benefit from the higher order approximation one needs a sufficiently accurate approximation of the continuous space-time manifold.

\subsection*{Acknowledgement}
Funding by the National Science Foundation through the project DMS-1315993,   the German Science Foundation (DFG) through the project RE 1461/4-1, and by the Russian Foundation for Basic Research through the projects 12-01-91330, 12-01-33084 is acknowledged. The authors   thank J\"org Grande for his contribution to the implementation of the finite element method.
\bibliographystyle{siam}
\bibliography{literatur}

\end{document}